\newtheorem{theorem}{Theorem}
\newtheorem{lemma}{Lemma}
\newtheorem{remark}{Remark}
\newtheorem{prop}{Proposition}
\newtheorem{defin}{Definition}
\numberwithin{equation}{section}  
\numberwithin{theorem}{section}
\numberwithin{lemma}{section}
\numberwithin{remark}{section}
\numberwithin{prop}{section}
\numberwithin{assum}{section}
\numberwithin{coro}{section}
\numberwithin{defin}{section}
\def\{{\protect\lbrace}
\def\}{\protect\rbrace}
\def\Real{\operatorname{Re}}
\def\dim{\operatorname{dim}}
\def\sign{\operatorname{sgn}}
\def\ord{\operatorname{ord}}
\def\const{{\rm{const}}}
\newcommand{\pa}{\partial}
\def\alf{\alpha}
\def\bet{\beta}
\def\Del{\Delta}
\def\eps{\varepsilon}
\def\gam{\gamma}
\def\Gam{\Gamma}
\def\Gamo{\Gam_0}
\def\phi{\varphi}
\def\om{\omega}
\def\la{\lambda}
\def\ti{\widetilde}
\newcommand{\bR}{\mathbb{R}}
\newcommand{\bC}{\mathbb{C}}
\newcommand{\bZ}{\mathbb{Z}}
\newcommand{\bN}{\mathbb{N}}
\newcommand{\FF}{\mathscr {F}}
\newcommand{\D}{\mathscr {D}}
\newcommand{\MM}{{\mathfrak M}}
\newcommand{\gamap}{\gam_{\alf}^+}
\newcommand{\gamam}{\gam_{\alf}^-}
\newcommand{\gamapm}{\gam_{\alf}^{\pm}}
\newcommand{\gamabp}{\gam_{\alf, \bet}^+}
\def\Del{\Delta}
\newcommand{\M}{S}
\def\Ci{{C^{\infty}}}
\def\Com{{C^{\om}}}
\newenvironment{proof}
 {\begin{trivlist} \item[\hskip \labelsep {\bf Proof}\hspace*{3 mm}]}
 {\hfill$\Box$\end{trivlist}}
\begin{document}

\title{Singularities of the geodesic flow on surfaces with pseudo-Riemannian metrics}

\author{A.O. Remizov\footnote{Supported by the FAPESP visiting professor grant 2014/04944-6.} \, and F.
Tari\footnote{Partially supported by the grants  FAPESP 2014/00304-2, CNPq 301589/2012-7,  472796/2013-5.}}

\maketitle

\begin{abstract}
We consider a pseudo-Riemannian metric that changes signature along a smooth curve on a surface, 
called the discriminant curve. The discriminant curve separates the surface locally into a Riemannian 
and a Lorentzian domain. We study the local behaviour and properties of geodesics  
at a point on the discriminant where the isotropic direction is tangent to the discriminant curve. 
\end{abstract}

\renewcommand{\thefootnote}{\fnsymbol{footnote}}
\footnote[0]{2010 Mathematics Subject classification 53C22, 53B30, 34C05.} \footnote[0]{Key Words and Phrases. Pseudo-Riemannian metrics, Geodesics, Singular points, Normal forms.}

\section{Introduction}
The work in this paper is a part of an ongoing
research on understanding the geometry of surfaces
endowed with a signature varying metric (see, for example,
\cite{GKT, GR, contours, degpairs, KT, Kossowski, KosKri, Mier, PR11, Pelletier,
Rem-Pseudo, Rem15, AmaniF, Steller,T, CaratheodoryR31}).
We consider here the behavior of geodesics on a surface $S$ endowed with a pseudo-Riemannian metric
given in local coordinates by
\begin{equation}
ds^2 = a(x,y) \, dx^2 + 2b(x,y) \, dx dy + c(x,y) \, dy^2
\label{1}
\end{equation}
where the coefficients $a,b,c$ are smooth (that means $\Ci$ unless stated otherwise)
functions on an open set $U \subset \mathbb R^2$.

We assume in all the paper that
the discriminant function $\Del(x,y) = (ac-b^2)(x,y)$
vanishes on a regular curve $\D$,
which is called {\it signature changing curve} or simply {\it discriminant curve} of the metric \eqref{1}.
The discriminant curve $\D$ separates (at least, locally) the surface $\M$
into a Riemannian ($\Del>0$) and a Lorenzian ($\Del<0$) domain.

At any point in the Lorentzian domain, there are two {\it lightlike} or {\it isotropic} directions
that consist of vectors with zero length and there is
one double isotropic direction at any point on $\D$.
The {\it isotropic curves} are integral curves of the equation
\begin{equation}
a(x,y) \, dx^2 + 2b(x,y) \, dx dy + c(x,y) \, dy^2 = 0.
\label{2}
\end{equation}

Isotropic curves are geodesics (except in the case below) in the metric \eqref{1}
when defined, for instance, as extremals of the action functional (the arc-length parametrization
is not defined for these curves), see \cite{Rem15}.
A non-isotropic geodesic is called {\it timelike} ({\it spacelike}) if $ds^2>0$ ($ds^2<0$)
along the geodesic.
The exception is when the isotropic curve coincides with the discriminant curve $\D$
and is a {\it singular solution} of equation \eqref{2}, i.e.,
it is the envelop of one-parameter family of isotropic curves.
This is the case $Z$ (Table \ref{Tab1}) and the reason why the singular solution
is not a geodesic is explained in Appendix~B.

When the unique isotropic direction is transverse to $\D$, the isotropic curves form a family of cusps
(configuration $C$ in Figure \ref{pic1}).
At points where the isotropic direction is tangent to $\D$,
the isotropic curves have generically one of the configurations $D_s,D_n,D_f$
in Figure \ref{pic1}.
Finally, the configuration $Z$ in Figure \ref{pic1}
occurs in the case when the isotropic direction is tangent to $\D$ at all points on $\D$.
(See Proposition~\ref{Pro2} for more details.)

\begin{figure}[ht]
\begin{center}
\includegraphics[height=4.5cm]{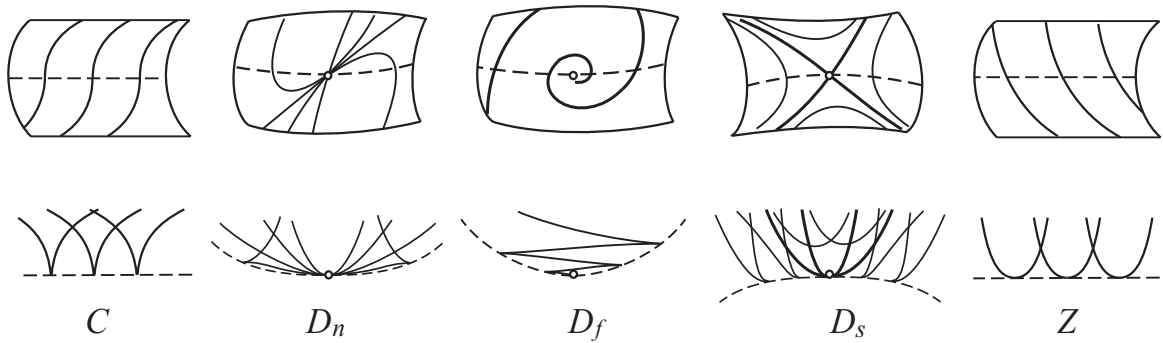}
\end{center}
\caption{
Top figures represent the integral curves of the field \eqref{4} on the isotropic surface $\FF$.
The bottom figures represent their projections to $S$, i.e., isotropic geodesics.
The dashed lines represent the criminant (in the top figures) and the discriminant curve (in the bottom figures).
}
\label{pic1}
\end{figure}

The geodesic flow generated by the metric \eqref{1} has singularities at every point $q \in \D$.
The geodesics cannot pass through $q \in \D$ in an arbitrary tangent direction
but only in certain directions said to be {\it admissible}, and the isotropic direction is always one of them.
Singularities of geodesic flow at generic points $q \in \D$ (excluding isolated points)
are studied in \cite{GR, Rem-Pseudo, Rem15}.
The excluded isolated points are the following:
\begin{itemize}
\item[{\rm (a)}] points where the isotropic direction is tangent to $\D$,
\item[{\rm (b)}] points of intersection of $\D$ with the closure of the parabolic set of $\M$.
\end{itemize}

In this paper, we complete the study in \cite{GR, Rem-Pseudo, Rem15} for the case~(a).
We give in Theorems~\ref{T4} and \ref{T5} the configurations of the geodesic in case (a)
that correspond to the configurations $D_s$ and $D_n, D_f$ of the isotropic geodesics, respectively.
We start our study with the configuration $Z$ (Theorem~\ref{T3}), which is not generic.
However, it provides a graphical illustration of some properties common with the cases $D_s, D_n, D_f$
(for instance, there are no geodesics outgoing from the point of interest to the Lorenzian domain).

In Appendix~A we recall some results on local normal forms of resonant and non-resonant
vector fields that we use in the paper.
Finally, in Appendix~B we briefly consider naturally parametrized geodesics defined as extremals of
the action functional.


\section{Isotropic curves and geodesics}\label{General}

As the discriminant curve $\D$ is supposed to be a regular curve, the coefficients $a,b,c$ of the metric
\eqref{1} cannot vanish simultaneously at any point in a neighborhood of $\D$.
We assume, without loss of generality, that near the point of interest the coefficient $c(x,y)>0$
and write equation \eqref{2} as the implicit differential equation
\begin{equation}
F(x,y,p) := c(x,y) p^2 + 2b(x,y) p + a(x,y) = 0,
\label{3}
\end{equation}
where $p=dy/dx$ is the non-homogeneous coordinate in the projective tangent bundle $PT\M$.
At any point $q=(x,y)$ in the Lorenzian domain ($\Del < 0$), equation \eqref{3} has two simple real roots
which correspond to the pair of transversal isotropic directions at $q$.
At any $q \in \D$, equation \eqref{3} has a double root $p_0(q)=-\frac{b}{c}(q)$, which corresponds to the unique
isotropic direction at $q$. Generically, the direction $p_0$ is transverse to $\D$ at almost all points
while tangency (of the first order) can occur at isolated points on $\D$ only.

In the $(x,y,p)$-space, equation \eqref{3} defines a surface $\FF$ called
the {\it isotropic surface} of the metric \eqref{1}.
On $\FF$ is defined a direction field
which is the intersection of the contact planes $dy = pdx$ with the tangent planes of $\FF$.
This direction field is parallel to the vector field
\begin{equation}
\frac{1}{2}F_p \pa_x + \frac{1}{2}pF_p\pa_y - \frac{1}{2}(F_x+pF_y) \pa_p.
\label{4}
\end{equation}
Formula \eqref{4} differs from what is generally used by the factor $\frac{1}{2}$. This is for convenience
in the computations carried out in this paper.

Although the vector field \eqref{4} is defined in the $(x,y,p)$-space, we will consider it
only on the isotropic surface $\FF$ which is an invariant surface of \eqref{4}.
The isotropic curves are the projections
of the integral curves of \eqref{4} on $\FF$ to the $(x,y)$-plane along the $p$-direction,
called {\it vertical}.
The set of points on the surface $\FF$ where the projection $\pi \colon \FF  \to \bR^2$
is singular (i.e., where $F=F_p=0$) is called the {\it criminant} of equation \eqref{3}.
That is, the criminant consists of the points $(q,p_0(q))$ with $q \in \D$.
The projection of the criminant on the $(x,y)$-plane is the discriminant curve $\D$.

\begin{remark}
{\rm
In a neighborhood of any point $q_* \in \D$, there exist local coordinates where the metric
is diagonal (i.e., $b\equiv 0$) and the discriminant curve $\D$ is given by the equation $a(x,y)=0$.
Then $p_0(q) \equiv 0$ at all $q \in \D$, the derivatives $a_x, a_y$ do not vanish simultaneously
(since $\Del$ is a regular function) and $c(q_*) \neq 0$.
We assume, without loss of generality, that $c(q_*) > 0$.
}
\label{Rem1}
\end{remark}

As in Riemannian geometry, non-parametrized geodesics in pseudo-Riemannian metrics
can be defined as extremals of the length functional or as extremals of the action
functional after ignoring the natural parametrization, see \cite{Rem15}.
In the fist case, a special problem arises.
The Lagrangian of the length functional has the form $L = \sqrt{F}$,
where $F$ defined in \eqref{3} vanishes on the isotropic surface $\FF$,
and the corresponding Euler--Lagrange equation is not defined on $\FF$.
Also, the arc-length parametrization is not defined for isotropic curves.
However, such a problem does not arise if we define geodesics as extremals of the action functional.

The standard projectivization $T\M \to PT\M$,
which means forgetting the natural parametrization of geodesics)
transforms the field \eqref{4}
into a direction field on  $PT\M$ parallel to
\begin{equation}
2\Del \pa_x + 2p\Del \pa_y + M \pa_p,
\label{5}
\end{equation}
where $M$ is a cubic polynomial in $p$ given by
$
M(q,p) = \sum\limits_{i=0}^3 \mu_i(q)p^i
$
with the coefficients
\begin{equation*}
\begin{aligned}
&\mu_0 = a(a_y-2b_x) +a_xb,\\
&\mu_1 = b(3a_y-2b_x) + a_xc - 2ac_x, \\
&\mu_2 = b(2b_y-3c_x) + 2a_yc - ac_y, \\
&\mu_3 = c(2b_y-c_x) - bc_y.
\end{aligned}
\end{equation*}
See \cite{Rem15} for more details.
By Theorem 1 in \cite{GR}, the isotropic surface $\FF$ is an invariant surface of the field \eqref{5},
and integral curves of the 2-distribution $dy = pdx$ lying on $\FF$ are integral curves of \eqref{5}.
Hence all isotropic curves (except for those that coincide with $\D$)
with an appropriate choice of parametrization are extremals of the action functional.
Consequently, they are geodesics.

Consider the field \eqref{5} at a point $(q,p)$, $q \in \D$.
Since $\Del (q)=0$, the vertical line passing through $q$ is an integral curve of \eqref{5}. If $M(q, p)\neq 0$,
it is the unique integral curve  of \eqref{5} passing through  $(q, p)$. The projection of this curve to the $(x,y)$-plane along the $p$-direction is the point $q$, so is not a geodesic.
Hence geodesics can pass through a point $q \in \D$
only at {\it admissible} tangential directions $p$ given by  $M(q, p)= 0$. One can show (see \cite{Rem-Pseudo}) that
\begin{equation}
M(q,p) = \frac{1}{3}(p-p_0) ( 2(\Del_x+p\Del_y) + M_p), \ \ \ \forall \, q \in \D.
\label{7}
\end{equation}

From \eqref{7}, it follows that the isotropic direction $p_0$ is always a root of the cubic polynomial $M(q,p)$.
Furthermore, if $p$ is a double root of $M(q,p)$, then the direction $p$ is tangent to the curve $\D$ at $q$.
Indeed, if $p \neq p_0$, then substituting $M = M_p = 0$ in \eqref{7}, we get $\Del_x+p\Del_y=0$.
If $p = p_0$, then differentiation \eqref{7} with respect to $p$ and substitution $M=M_p=0$ yields $\Del_x+p_0\Del_y=0$.

\medskip

Let $K$ denote the Gaussian curvature of $\M \setminus \D$.
(Generically $K(q) \to \infty$ as $q \to \D$.)
The function $K_1=\Del^2 K$ is well defined and is smooth on the whole surface $\M$
and is an isometric invariant.
The set $K_1=0$ is the closure of the parabolic set of $\M$.

Before giving some properties of $M$, we introduce the following notation:
$p_0 \pitchfork \D$ means that the direction $p_0$ is transverse to the curve $\D$,
$p_0 \| \D$ and $\ord \, (p_0 \| \D) =1$ mean respectively that $p_0$ is tangent to $\D$
and has first order tangency with $\D$.
Finally, $(\eps_1,\eps_2)$ and $(\la_1,\la_2,0)$ denote the spectrums of the linear parts
of the vector fields \eqref{4} and \eqref{5} at the point $(q,p_0)$, $q \in \D$, respectively.

\begin{prop}
Let $q \in \D$, then the following statements hold.
\begin{itemize}
\item[{\rm (i)}]
The cubic polynomial $M(q,p)$ has a unique real root $p_0$ if and only if $K_1(q)<0$.
\item[{\rm (ii)}]
$p_0$ is a simple root of  $M(q,p)$  if and only if $p_0 \pitchfork \D$ at $q$,
or equivalently,
the $p$-component of the field \eqref{4} does not vanish at $(q,p_0)$.
\item[{\rm (iii)}]
$M(q,p)$ has three distinct real roots $p_0, p_1, p_2$  if and only if $K_1(q) > 0$ and $p_0 \pitchfork \D$ at $q$.
\item[{\rm (iv)}]
$M(q,p)$ has a double root $p_1=p_2\ne p_0 $ if and only if $K_1(q) = 0$ and $p_0 \pitchfork \D$ at $q$.
The double non-isotropic direction $p_{1}$ is tangent to $\D$ at $q$.
\item[{\rm (v)}]
$M(q,p)$ has a double root  $p_0 = p_1 \neq p_2$ if and only if $K_1(q) > 0$ and $p_0 \| \D$ at $q$.
\item[{\rm (vi)}]
The isotropic direction $p_0 \| \D$ at $q$ if and only if \eqref{4} vanishes at $(q,p_0)$.
Then $\eps_1+\eps_2 \neq 0$. Moreover, $\ord \, (p_0 \| \D) =1$ if and only if  $\eps_{1}, \eps_{2}\neq 0$, that is, $(q,p_0)$ is a node, saddle or focus of \eqref{4}.
\item[{\rm (vii)}]
If $p_0 \| \D$ at all points on $\D$, then the $p$-component of the field \eqref{4} vanishes on the criminant. Hence the criminant consists of singular points of \eqref{4} with one of the eigenvalues $\eps_{1},\eps_{2}$ equal to zero and the other is distinct from zero.
\item[{\rm (viii)}]
If $(q,p)$ is a singular point of the field \eqref{5}, then $\la_1=2(\Del_x(q)+p\Del_y(q))$ and $\la_2=M_p(q,p)$.
\end{itemize}
\label{Pro1}
\end{prop}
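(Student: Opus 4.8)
The plan is to prove the eight statements more or less in order, extracting everything from the single identity \eqref{7} together with the explicit formulas for the $\mu_i$ and the relationship between $K_1$ and the discriminant of the cubic $M(q,p)$. The computational backbone is Remark~\ref{Rem1}: I would work in the diagonalizing coordinates where $b\equiv 0$, $\D=\{a=0\}$, $p_0\equiv 0$ on $\D$, and $c(q_*)>0$. In these coordinates, on $\D$ one has $\mu_0=a a_y=0$, so $M(q,p)=p(\mu_1+\mu_2 p+\mu_3 p^2)$ with $\mu_1=a_xc$, $\mu_2=2a_yc-ac_y$, $\mu_3=c(2b_y-c_x)-bc_y$ evaluated on $\{a=0\}$; thus $\mu_1=a_xc$ and $\mu_2=2a_yc$ on $\D$. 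The quadratic factor $Q(p)=\mu_1+\mu_2p+\mu_3p^2$ carries all the information: $p_0=0$ is a simple root of $M$ iff $\mu_1\ne0$ iff $a_x\ne0$ iff $p_0\pitchfork\D$ (since $\D=\{a=0\}$ has tangent direction $p$ with $a_x+pa_y=0$, so tangency at $p_0=0$ means $a_x=0$); this gives (ii), and the claim about the $p$-component of \eqref{4} follows since that component is $-\tfrac12(F_x+pF_y)$, which on the criminant equals $-\tfrac12 a_x$ up to sign in these coordinates. For (vi), $p_0\|\D$ at $q$ means $a_x(q)=0$, which makes $\mu_1(q)=0$ and also forces the $x$- and $y$-components of \eqref{4} to vanish (they are $\tfrac12 F_p$ and $\tfrac12 pF_p$, and $F_p=2cp+2b=0$ at $(q,0)$ when $b=0$), so \eqref{4} vanishes at $(q,p_0)$; conversely vanishing of \eqref{4} forces $F_x=a_x=0$.

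For the curvature statements (i), (iii), (iv), (v), the key is the classical fact that for a diagonal metric $ds^2=a\,dx^2+c\,dy^2$ the quantity $K_1=\Del^2K$ restricted to $\D=\{a=0\}$ is, up to a positive factor, $-a_x\mu_1$-type expression — more precisely I would compute $K_1|_{\D}$ and show it equals (a positive multiple of) the discriminant of the quadratic $Q(p)=\mu_1+\mu_2p+\mu_3p^2$, namely a positive multiple of $\mu_2^2-4\mu_1\mu_3$. Granting this, everything is bookkeeping on a cubic $M=p\,Q(p)$: $M$ has a unique real root iff $Q$ has no real root iff $\operatorname{disc}Q<0$ iff $K_1<0$, giving (i); if $p_0\pitchfork\D$ then $\mu_1\ne0$ so $0$ is not a root of $Q$, and $M$ has three distinct real roots iff $\operatorname{disc}Q>0$ (i.e. $K_1>0$), proving (iii), while a double root $p_1=p_2\ne p_0$ occurs iff $\operatorname{disc}Q=0$ (i.e. $K_1=0$), proving (iv); the tangency of $p_1$ to $\D$ in (iv) is the general fact already proved in the excerpt after \eqref{7} (a double non-isotropic root of $M$ is tangent to $\D$). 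For (v), a double root $p_0=p_1\ne p_2$ means $0$ is a root of $Q$ (so $\mu_1=0$, i.e. $p_0\|\D$) and the other root of $Q$ is nonzero; one then checks this is equivalent to $K_1>0$ together with $\mu_1=0$, using that $\operatorname{disc}Q=\mu_2^2$ when $\mu_1=0$ and $\mu_2=2a_yc\ne0$ because $a_x=0$ forces $a_y\ne0$ ($\Del$ regular).

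Statement (viii) is immediate: differentiating \eqref{5}, at a singular point $(q,p)$ on $\D$ one has $\Del(q)=0$ and $M(q,p)=0$, and the linearization is block-triangular over the $(x,y)$-block versus the $p$-direction; the eigenvalue in the vertical direction is $\partial_p$ of the $p$-component $M$, i.e. $M_p(q,p)$, and the remaining eigenvalue comes from $2\Del_x+2p\Del_y$ acting on the horizontal direction $(1,p)$, giving $\la_1=2(\Del_x(q)+p\Del_y(q))$ (the third eigenvalue of the direction field \eqref{5} being the spurious $0$). For the refined claim in (vi) that $p_0\|\D\Rightarrow\eps_1+\eps_2\ne0$ and $\ord(p_0\|\D)=1\iff\eps_1,\eps_2\ne0$, I would linearize \eqref{4} at $(q,0)$ in the $b\equiv0$ coordinates: the surface $\FF$ is $cp^2+a=0$, near a tangency point it is a smooth graph, and the $2\times2$ linearization of \eqref{4} restricted to $\FF$ has trace equal to a nonzero multiple of $a_y(q)c(q)$ (again nonzero by regularity of $\Del$), giving $\eps_1+\eps_2\ne0$; the product $\eps_1\eps_2$ vanishes exactly when the first-order tangency degenerates, i.e. $\ord(p_0\|\D)\ge2$, so first-order tangency is equivalent to both eigenvalues nonzero, which is the node/saddle/focus dichotomy. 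Statement (vii) is the "global along $\D$" version: if $p_0\|\D$ everywhere then $a_x\equiv0$ on $\D$, so the criminant $\{(q,0):q\in\D\}$ lies in the zero set of the $p$-component of \eqref{4}; hence every criminant point is singular, and since the criminant is a curve of singular points the linearization has a zero eigenvalue along it, while the transverse eigenvalue is the nonzero $\eps$ computed as above.

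The main obstacle I anticipate is the curvature computation: verifying cleanly that $K_1|_{\D}$ is a positive multiple of $\mu_2^2-4\mu_1\mu_3=\operatorname{disc}Q$ in the diagonalizing coordinates. This requires writing out $K$ for the metric $a\,dx^2+c\,dy^2$ (Brioschi-type formula), multiplying by $\Del^2=(ac)^2$, and carefully taking the limit/restriction to $\{a=0\}$, keeping track of which derivatives survive; the algebra is the one genuinely delicate point, and everything else then follows by elementary discussion of a cubic with one known root and by linear algebra on block-triangular matrices. I would present the curvature identity as a lemma computed once, and cite \cite{Rem15} or \cite{GR} for the formula for $K_1$ if it appears there, so as not to grind through it in the body of the proof.
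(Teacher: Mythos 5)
Your proposal follows essentially the same route as the paper: work in the diagonalizing coordinates of Remark~\ref{Rem1}, factor $M(q,p)=-cp\,Q(p)$ with $Q(p)=c_xp^2-2a_yp-a_x$, identify $K_1|_{\D}$ with a positive multiple of $\operatorname{disc}Q$ via the Brioschi formula (the paper gets $K_1(0)=c(0)\delta(0)/4$ with $\delta=a_xc_x+a_y^2$, exactly your ``one delicate point''), and then read off (i)--(v) from the cubic $pQ(p)$, with (vi)--(viii) by the direct linearizations you sketch. The only step to tighten is (i): ``$M$ has a unique real root iff $Q$ has no real root'' overlooks the case where $Q$ has a double root at $p=0$ (which also leaves $M$ with a single real root while $\operatorname{disc}Q=0$); the paper excludes this explicitly because it would force $a_x=a_y=0$, contradicting the regularity of $\Del$ --- an ingredient you already use in (v), so the fix is immediate.
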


\begin{proof}
It is convenient to take the point $q$ to be the origin and choose the local coordinates in Remark \ref{Rem1}.
Then \eqref{7} becomes
$$
M(0,p) = -cp Q(p), \ \ \  Q(p) = c_xp^2 -2a_yp -a_x,
$$
the quadric polynomial $Q$ has discriminant $\delta = a_xc_x + a_y^2$.
Using the Brioschi formula \cite{Struik},
we get $K_1(0) = {c(0)\delta}(0)/4$. Since the coefficient $c$ is positive,
it follows that $K_1(0)$ and $\delta(0)$ have the same sign.

For (i), the cubic polynomial $M(0,p)$ has only one real root $p_0$ if and only if
$\delta(0)<0$ or $\delta(0)=0$ and $p_0=0$ is a double root of $Q(p)$.
In the second case we have $a_x(0) = 0$, which together with  $\delta(0)=0$
gives  $a_x(0) = a_y(0) = 0$.
This contradicts the assumption that $\Del=ac$ is regular at the origin.

The proof of (ii) follows by direct calculation.
For (iii), $M(0,p)$ has three distinct real roots  if and only if
$Q(p)$ has two distinct real roots $p_1 \neq p_2$ and $p_0 \neq p_{1},p_{2}$.
Clearly, these conditions are equivalent to $K_1(0)>0$ and $p_0 \pitchfork \D$.
The statement (iv) follows from \eqref{7}.
The proofs of the remaining statements follow by direct calculations and are omitted.
\end{proof}

We have the following about isotropic geodesics.

\begin{prop}
Let $q \in \D$, then the following statements hold.
\begin{itemize}
\item[{\rm $C$.}]
If $p_0 \pitchfork \D$ at $q \in \D$, then the germ of \eqref{3} at $(q,p_0)$
is smoothly equivalent to $p^2 = x$ {\rm (}Cibrario normal form{\rm )}.
\item[{\rm $D$.}]
If $\ord \, (p_0 \| \D) =1$ at $q \in \D$
and between the eigenvalues $\eps_{1}, \eps_{2}$ there are no non-trivial resonances
\begin{equation}
s_1\eps_1 + s_2\eps_2 = \eps_j, \ \  s_{1},s_2  \in \bZ_+, \ \ s_1+s_2 \ge 1, \ \  j=1,2,
\label{8}
\end{equation}
then the germ of \eqref{3} at $(q,p_0)$ is smoothly equivalent to
$p^2 = y-\eps x^2$, where $\eps = \eps_1 \eps_2 \neq 0,\frac{1}{16}$ {\rm (}Dara-Davydov normal form{\rm )}.
The excluded values $\eps = 0,\frac{1}{16}$ correspond the cases when
$\eps_{1}\eps_{2} = 0$ or $\eps_1 = \eps_2$.
\item[{\rm $Z$.}]
If $p_0 \| \D$ at all points on $\D$, then the germ of \eqref{3} at $(q,p_0)$
is smoothly equivalent to $p^2 = y$ {\rm (}Clairaut normal form{\rm )}.
\end{itemize}
\label{Pro2}
\end{prop}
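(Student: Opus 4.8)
\emph{Proof proposal.}

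The plan is to work in the coordinates of Remark~\ref{Rem1}: put $q$ at the origin, $b\equiv 0$, $\D=\{a=0\}$, $p_{0}\equiv 0$ and $c(0)>0$, so that \eqref{3} becomes $p^{2}=g(x,y)$ with $g:=-a/c$ a germ whose zero set is the regular curve $\D$. Then the isotropic surface $\FF=\{p^{2}=g\}$ is smooth near the origin (its gradient there is $(-g_{x},-g_{y},0)(0)\neq 0$), the criminant $\Gamma=\FF\cap\{p=0\}$ is a regular curve projecting diffeomorphically onto $\D$, and $\pi|_{\FF}$ has a Whitney fold along $\Gamma$ because the vertical direction $\pa_{p}$ is transverse to $\Gamma$. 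Which of the three cases occurs is governed by the behaviour of the field \eqref{4} along $\Gamma$, as recorded in Proposition~\ref{Pro1}: it is transverse to $\Gamma$ at the origin in case $C$ (part (ii)); it has an isolated elementary singular point there with $\eps_{1},\eps_{2}\neq 0$ and $\eps_{1}+\eps_{2}\neq 0$ in case $D$ (part (vi)); and it vanishes on all of $\Gamma$ in case $Z$ (part (vii)).

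Cases $C$ and $Z$ I would dispose of by the classical fold arguments, only indicating the mechanism. In case $C$, a flow-box chart for \eqref{4} along $\Gamma$ on $\FF$, combined with the fold normal form for $\pi$ along $\Gamma$, straightens the equation to the Cibrario form $p^{2}=x$; since \eqref{4} is non-singular here, no further hypothesis is needed. In case $Z$ the hypothesis $p_{0}\|\D$ at all points forces $a_{x}$ to vanish on $\D$, hence $a_{x}=p^{2}\cdot(\text{smooth})$ on $\FF$, so that $p$ is a common factor of all components of \eqref{4} on $\FF$; dividing it out leaves a non-singular field transverse to $\Gamma$, and straightening it together with the fold yields the Clairaut form $p^{2}=y$, whose criminant $\{y=0\}$ is the envelope of the isotropic curves.

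The substance of the proposition is case $D$. By Proposition~\ref{Pro1}(vi) the hypothesis $\ord(p_{0}\|\D)=1$ means exactly that $(q,p_{0})$ is a node, saddle or focus of \eqref{4}, with eigenvalues $\eps_{1},\eps_{2}\neq 0$, $\eps_{1}+\eps_{2}\neq 0$. A direct computation on the model $p^{2}=y-\eps x^{2}$ shows that in the chart $(x,p)$ of its isotropic surface the field \eqref{4} reads $\dot x=p$, $\dot p=\tfrac12 p-\eps x$, whose linear part at the origin has trace $\tfrac12$ and determinant $\eps$. Since the equivalence of implicit equations preserves \eqref{4} only up to a nonvanishing factor, the modulus attached to the germ is the unordered eigenvalue ratio; normalising the representative so that $\eps_{1}+\eps_{2}=\tfrac12$ turns this modulus into the single parameter $\eps=\eps_{1}\eps_{2}$, and the model realises it, with a node for $0<\eps<\tfrac1{16}$, a saddle for $\eps<0$, a focus for $\eps>\tfrac1{16}$, and the excluded values $\eps=0$ and $\eps=\tfrac1{16}$ corresponding exactly to $\eps_{1}\eps_{2}=0$ (a degenerate saddle--node) and $\eps_{1}=\eps_{2}$. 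It then remains to bring \eqref{4} itself to the model: under the absence of resonances \eqref{8} the smooth linearisation theorem applies --- Sternberg's theorem for a node or a saddle, while for a focus the condition $\eps_{1}+\eps_{2}\neq 0$ already puts us in the Poincar\'e domain, where no resonance can occur --- and produces a $C^{\infty}$ chart on $\FF$ in which \eqref{4} is linear; comparing with the linearised model then gives the equivalence.

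The point requiring care --- and the genuine content of the Dara--Davydov classification, which one may alternatively just cite --- is that the equivalence of implicit differential equations is allowed to use only point transformations, i.e. prolongations of diffeomorphisms of the $(x,y)$-plane, so the linearising diffeomorphism of $\FF$ must descend to the base and carry the fold data of $\pi|_{\FF}$ to that of the model; an arbitrary linearising diffeomorphism of $\FF$ does neither. Concretely, I would first normalise the projection to the relative fold form $\pi(x,p)=(x,\,p^{2}\,\widetilde h(x,p^{2}))$ with $\widetilde h(0)\neq 0$ (obtained from a base change fixing $\D$) while keeping the singular point at the origin, and then carry out the Sternberg--Chen normalisation inside the class of changes $x\mapsto\xi(x,p^{2})$, $p\mapsto p\cdot u(x,p)$ with $u(0)\neq 0$ --- the induced action on $\FF$ of the admissible base diffeomorphisms --- subject to the contact compatibility between $\xi$ and $u$. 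Showing that this restricted freedom is still enough to remove all non-linear terms of the field is the main obstacle; once this is done, reading off the equation in the base coordinates gives $p^{2}=y-\eps x^{2}$.
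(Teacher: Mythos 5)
The paper does not actually prove Proposition~\ref{Pro2}: its ``proof'' consists entirely of citations --- \cite{A-Geom} (and \cite{cib}) for case $C$, \cite{Dav85,Dav94} for case $D$, and \cite{Dav-Japan} for case $Z$. Your proposal takes a genuinely different route by sketching the mechanisms behind those classical results, and the sketch is sound as far as it goes: the reduction to $p^2=g(x,y)$ in the coordinates of Remark~\ref{Rem1}, the fold structure of $\pi|_{\FF}$, the trichotomy (field \eqref{4} transverse to the criminant / elementary singular point / vanishing on the criminant with a common factor $p$) agrees with Proposition~\ref{Pro1}(ii),(vi),(vii); your computation of the linear part of \eqref{4} on the model $p^2=y-\eps x^2$ (trace $\tfrac12$, determinant $\eps$, degeneracies at $\eps=0,\tfrac1{16}$) is correct and matches Table~\ref{Tab1}; and the observation that the focus case sits in the Poincar\'e domain, so that \eqref{8} is automatic there, is also right. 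What your argument buys is an explanation of \emph{why} the normal forms are what they are and where the modulus $\eps=\eps_1\eps_2$ comes from, which the paper leaves entirely to the references. The caveat is that, as a self-contained proof, your case $D$ stops exactly where the real work begins: you correctly identify that the linearizing chart on $\FF$ must descend to a point transformation of the $(x,y)$-plane compatible with the fold, you set up the restricted class of changes $x\mapsto\xi(x,p^2)$, $p\mapsto p\,u(x,p)$, but you do not show that this restricted freedom suffices to kill the nonlinear terms --- that is precisely the content of Davydov's theorem, which you (like the paper) would in the end have to cite. So the proposal is correct in outline and in all the computations it actually performs, but it is a commented citation rather than an independent proof of case $D$; for cases $C$ and $Z$ the indicated flow-box-plus-fold arguments are standard and adequate at the level of a sketch.
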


\begin{proof}
The proof of the case $C$ in the smooth category can be found in \cite{A-Geom}
(the earlier proof in the analytic category can be found in \cite{cib}).
The proof of the case $D$ can be found in \cite{Dav85} or \cite{Dav94}.
The proof of the case $Z$ can be found in \cite{Dav-Japan}
(it can be also proved using the approach in \cite{A-Geom} for the case $C$).
\end{proof}

\begin{remark}
{\rm
It is worth observing that Cibrario normal form $p^2 = x$ (case $C$ in Proposition~\ref{Pro2}) 
is valid in the analytic category, while Dara-Davydov normal form $p^2 = y-\eps x^2$ 
(case $D$ in Proposition~\ref{Pro2}) is not; see \cite{Mier}.
}
\label{RemAnal}
\end{remark}

Table~\ref{Tab1} combines the information in Propositions \ref{Pro1} and \ref{Pro2} and gives
a classification of singularities of the geodesic flow generated by the metric \eqref{1}.
The first column contains  the names of the singularities.
The singularities  $C_i$, $i=1,2,3$ and $D_s, D_n, D_f$ are topologically stable,
while the singularity $Z$ is not and has infinite codimension.
Moreover, the topological equivalence eliminates the modulus parameter $\eps$
in the normal form $p^2 = y-\eps x^2$ in each of three cases $D_s, D_n, D_f$ (see \cite{Dav85, Dav94}).
The second and third columns are the conditions for the occurrence of the singularities. The last column
gives the normal forms of the equation of the isotropic geodesics \eqref{3},
see also Figure~\ref{pic1}.

\begin{table}[htb]
\begin{center}
\caption{
Classification of singularities of the geodesic flow.
}
{\footnotesize
\begin{tabular}{|c|c|c|c|c|}
\hline
case &  condition 1 & condition 2 & real roots of $M(q,p)$ & normal form of \eqref{3} \\
\hline
$C_1$ & $K_1(q)<0$ &  $p_0 \pitchfork \D$ at $q$ & $p_0$ (simple) &
$p^2 = x$  \\
\hline
$C_2$ & $K_1(q)=0$ &  $p_0 \pitchfork \D$ at $q$ & $p_0 \neq p_1 = p_2$ &
$p^2 = x$ \\
\hline
$C_3$ & $K_1(q)>0$ & $p_0 \pitchfork \D$ at $q$ & $p_0, p_1, p_2$ (all simple) &
$p^2 = x$ \\
\hline
$D_s$ &  $K_1(q)>0$ & $\ord \, (p_0 \| \D) =1$ at $q$ &
$p_0 = p_1 \neq p_2$ & $p^2 = y-\eps x^2$  \\
{} & & \eqref{4} is a saddle at $(q,p_0)$ on $\FF$ & {} & $\eps < 0$  \\
\hline
$D_n$ &  $K_1(q)>0$ & $\ord \, (p_0 \| \D) =1$ at $q$ &
$p_0 = p_1 \neq p_2$ & $p^2 = y-\eps x^2$ \\
{} & & \eqref{4} has node at $(q,p_0)$ on $\FF$& {} & $0 < \eps < \frac{1}{16}$    \\
\hline
$D_f$ &  $K_1(q)>0$ & $\ord \, (p_0 \| \D) =1$ at $q$ &
$p_0 = p_1 \neq p_2$ & $p^2 = y-\eps x^2$  \\
{} & & \eqref{4} is a focus at $(q,p_0)$ on $\FF$& {} & $\eps > \frac{1}{16}$   \\
\hline
$Z$ & $K_1(q)>0$ &  $p_0 \| \D$  at $\forall q \in \D$ & $p_0 = p_1 \neq p_2$  $\forall q \in \D$  &
$p^2 = y$   \\
\hline
\end{tabular}
\label{Tab1}
}
\end{center}
\end{table}

The following results about the behavior of the geodesics
outgoing from a point $q \in \D$ with the tangential direction $p$ being a
simple root of $M(q,p)$ are established in \cite{Rem-Pseudo}.
We start with geodesics passing through a point on the discriminant curve $\D$ with non-isotropic admissible directions. It covers all the cases in Table \ref{Tab1} except $C_2$.

\begin{theorem}[\cite{Rem-Pseudo}]
\label{T1}
Let $p_i$, $i=1$ or $2$, be a non-isotropic simple real root of the cubic polynomial $M(q,p)$ at $q \in \D$.
Then to the admissible direction $p_i$ corresponds a unique smooth geodesic
passing through the point $q$.
\end{theorem}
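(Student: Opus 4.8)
The plan is to work with the vector field \eqref{5} on the projectivized tangent bundle $PT\M$ and the point $(q,p_i)$, which by hypothesis is a singular point of \eqref{5} since $M(q,p_i)=0$ and $\Del(q)=0$. First I would normalize the situation using Remark~\ref{Rem1}, placing $q$ at the origin with a diagonal metric and $\D=\{a=0\}$, so that $p_0=0$ along $\D$ and $p_i\neq 0$. The key observation is that $p_i$ being a \emph{simple} root of $M(q,\cdot)$ means $M_p(q,p_i)\neq 0$, and by Proposition~\ref{Pro1}(viii) the eigenvalue $\la_2=M_p(q,p_i)$ is nonzero. The other eigenvalue is $\la_1=2(\Del_x(q)+p_i\Del_y(q))$; since $p_i$ is simple and non-isotropic, $p_i$ is \emph{not} tangent to $\D$ (tangency would force, via \eqref{7}, $\Del_x+p_i\Del_y=0$, and combined with $M(q,p_i)=0$ this is exactly the double-root condition), hence $\la_1\neq 0$ as well.

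Next I would determine the eigenvectors. The vertical line $x=x(q),\ y=y(q)$ through $q$ is always an integral curve of \eqref{5} (because $\Del(q)=0$), so the $p$-axis direction $\pa_p$ is an eigenvector, and its eigenvalue is $\la_2=M_p(q,p_i)\neq 0$. Transverse to it, the eigenspace for $\la_1\neq 0$ projects isomorphically onto the $(x,y)$-plane and carries the direction $(1,p_i)$, i.e.\ precisely the admissible direction $p_i$ lifted to $PT\M$. Thus $(q,p_i)$ is a hyperbolic (or at least non-degenerate, eigenvalues nonzero) singular point with one eigen-direction vertical and one horizontal-with-slope-$p_i$. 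Along the horizontal invariant manifold, the ratio $\la_2/\la_1$ controls whether it is a smooth node-type separatrix; I would invoke the normal-form results recalled in Appendix~A (resonant/non-resonant linearization) to conclude there is a unique smooth invariant curve through $(q,p_i)$ tangent to the horizontal eigenvector. If $\la_2/\la_1\notin\mathbb Z_{>0}$ this is immediate from $C^\infty$ linearization of the hyperbolic saddle/node; if a resonance occurs the invariant manifold is still unique and smooth (it is the strong-stable or strong-unstable manifold, or the center-type line is the vertical one which we discard).

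Finally I would project this unique smooth invariant curve $\gamma\subset PT\M$ down to the $(x,y)$-plane. Since $\gamma$ is tangent at $(q,p_i)$ to the non-vertical eigenvector, the projection $\pi\colon PT\M\to\M$ is a local diffeomorphism from $\gamma$ onto its image near $q$, so the image is a smooth curve through $q$ with tangent direction $p_i$; by construction it lifts back to an integral curve of \eqref{5} satisfying $dy=p\,dx$, hence it is a geodesic in the sense of extremals of the action functional (as explained before \eqref{7}). Uniqueness follows because any geodesic through $q$ with direction $p_i$ must lift to an integral curve of \eqref{5} through $(q,p_i)$ lying on the contact distribution $dy=p\,dx$; the only such integral curve other than $\gamma$ is the vertical line, which projects to the point $q$ and is not a geodesic. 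The main obstacle is the resonant case of the linearization: one must be careful that when $\la_2/\la_1$ is a positive integer the relevant invariant manifold (the one tangent to the horizontal eigenvector) is still unique and smooth and does not absorb the vertical direction; this is handled by the normal forms in Appendix~A, but it requires checking that the resonance does not destroy the smooth dependence needed for the projected curve to be $C^\infty$ rather than merely $C^k$ or formal — and here one may need to appeal to the specific polynomial structure of $M$ rather than a black-box linearization theorem.
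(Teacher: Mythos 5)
The paper itself does not prove Theorem~\ref{T1}: it is imported from \cite{Rem-Pseudo}. Your strategy --- view $(q,p_i)$ as a singular point of the field \eqref{5}, identify the two nonzero eigendirections (the vertical one and the one over $(1,p_i)$), take the invariant curve tangent to the non-vertical eigenvector and project it to the $(x,y)$-plane --- is exactly the strategy of the cited source and of the analogous arguments carried out in Section~3 of this paper, and your verification that $\la_1=2(\Del_x(q)+p_i\Del_y(q))$ and $\la_2=M_p(q,p_i)$ are both nonzero is correct.

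Two points need repair. First, the singular point $(q,p_i)$ is \emph{not} hyperbolic: the singular set of \eqref{5} near $(q,p_i)$ is the curve $\{\Del=0,\ M=0\}$, so the spectrum is $(\la_1,\la_2,0)$ (Proposition~\ref{Pro1}(viii)), and the center direction points along this curve of singular points --- it is not the vertical direction, which is the genuine $\la_2$-eigendirection. You therefore cannot ``discard'' the zero eigenvalue by identifying it with the vertical line; you must invoke the partially hyperbolic theory of Appendix~A, with Theorem~\ref{PT4} supplying what your uniqueness argument actually needs, namely that exactly two integral curves of \eqref{5} pass through $(q,p_i)$. Second, your case distinction on $\la_2/\la_1\in\bZ_{>0}$ is the wrong dichotomy, and the ``non-resonant, hence $C^\infty$-linearizable'' branch never occurs: evaluating \eqref{7} at a non-isotropic root gives $2(\Del_x+p_i\Del_y)+M_p(q,p_i)=0$, i.e.\ $\la_1+\la_2=0$ identically along the singular curve. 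This is precisely the resonance \eqref{49}, so smooth linearization is unavailable (the paper meets the same resonance in Lemma~\ref{Lem31} and resolves it with Theorems~\ref{PT4} and \ref{PT6}). Neither issue is fatal: the curve you need is the one-dimensional strong unstable (resp.\ strong stable) manifold of $(q,p_i)$, which is unique and $C^\infty$ irrespective of resonances, and with that substitution your projection and uniqueness arguments go through.
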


The next theorem is proved in \cite{Rem-Pseudo} for the cases $C_1, C_3$.
However, from its proof it follows that it is also valid in the case $C_2$,
since it deals with the root $p_0$ only.
Theorems \ref{T1} and \ref{T2} give complete information about the configuration of geodesics
outgoing from a point $q \in \D$ with all possible admissible directions in the cases $C_1, C_3$.

\begin{theorem}[\cite{Rem-Pseudo}]
\label{T2}
Suppose that the isotropic root $p_0$ of $M(q,p)$ at $q\in \D$ is simple.
Then to $p_0$ corresponds a one-parameter family $\Gamo$ of geodesics
outgoing from the point $q$.
There exist smooth local coordinates centered at $0$ such that
the discriminant curve $\D$ coincides with the $x$-axis,
the isotropic direction $p_0(q)=\infty$
and the geodesics $\gamapm \in \Gamo$ are semi-cubic parabolas
\begin{equation}
x= \alf \tau^3 X_{\alf}(\tau), \quad
y= \tau^2 Y_{\alf}(\tau), \quad
\alf \in \bR^+,
\label{9}
\end{equation}
where $X_{\alf}, Y_{\alf}$ are smooth functions, $X_{\alf}(0)=1$, $Y_{\alf}(0)=\pm 1$.
Here the superscripts $+$ and $-$ distinguish geodesics outgoing from the origin in
the semiplanes $y > 0$ and $y < 0$, respectively.
The family \eqref{9} contains the three types of geodesics:
$\gamap$ are timelike if $\alf>1$, spacelike if $\alf<1$, isotropic if $\alf=1$;
all $\gamam$ are timelike. See \mbox{\rm Figure~\ref{pic2}}.
\end{theorem}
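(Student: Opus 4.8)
The plan is to analyse the projectivized geodesic field \eqref{5} in a neighbourhood of the point $(q,p_0)$ on the criminant and to project its integral curves to $S$. First I would put $q$ at the origin and adopt the coordinates of Remark~\ref{Rem1}, so $b\equiv 0$, $\D=\{a=0\}$, $p_0\equiv 0$ on $\D$, $c(0)>0$; since $p_0$ is a simple root of $M$, Proposition~\ref{Pro1}(ii) gives $a_x(0)\neq 0$. The criminant $\{a=0,\ p=0\}$ is then an entire curve of equilibria of \eqref{5}: there $2\Del=2\mu_0=0$, and $M(q,p_0)=0$ for all $q\in\D$ by \eqref{7}. Differentiating \eqref{7} in $p$ and setting $p=p_0$ yields $M_p(q,p_0)=\Del_x(q)+p_0\Del_y(q)$, so by Proposition~\ref{Pro1}(viii) the spectrum of \eqref{5} at $(q,p_0)$ is $(\la_1,\la_2,0)$ with $\la_1=2\la_2$, and $\la_2=M_p(q,p_0)\neq 0$ because $p_0$ is simple. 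Hence $(q,p_0)$ is a semi-hyperbolic singular point whose one-dimensional centre manifold is the criminant itself, carrying in the transverse directions a node in $2{:}1$ resonance.

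Next I would normalize \eqref{5} along this centre manifold. Reducing to the centre manifold and applying Poincar\'e--Dulac transversally (the relevant statements are collected in Appendix~A), \eqref{5} takes, in smooth coordinates $(u,v,w)$ with criminant $=\{u=v=0\}$, the form
\[
\dot u = u\,A(w)+\beta\,v^{2},\qquad \dot v = v\,B(w),\qquad \dot w = 0,
\]
with $A(0)=2B(0)\neq 0$, the last equation expressing that $w$ parametrizes a curve of equilibria. Every orbit has $w=$ const; over $w=0$ (the point $q$) the non-trivial orbits are $u=cv^{2}$ if $\beta=0$ and $u=v^{2}(\beta\ln v+c)$ if $\beta\neq 0$, in either case a one-parameter family indexed by $c$, which splits by the sign of $c$ (equivalently, of the fast coordinate $u$) into two sub-families; their projections to $S$ give $\Gamo$, one sub-family lying over $y>0$ and the other over $y<0$ in the coordinates produced below, while the excluded orbit $u\equiv 0$ projects onto the single point $q$.

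To read off the shape of $\gamapm\in\Gamo$ I would integrate \eqref{5} along an orbit using $p$ as parameter near $p=0$. From $\dot x=2ac$, $\dot y=2pac$, $\dot p=M=a_x(0)c(0)\,p+\dots$ one gets $\dfrac{dx}{dp}=\dfrac{2x}{p}\bigl(1+O(p)\bigr)$ and $\dfrac{dy}{dp}=2x+\dots$, hence $x=p^{2}\xi(p)$, $y=p^{3}\eta(p)$ with $\xi(0)\neq 0$, $\eta(0)=\tfrac23\xi(0)$. Interchanging the roles of $x$ and $y$ (after which $\D$ becomes the $x$-axis and $p_0=\infty$) and rescaling $p$ so that the leading coefficient of the new $y$ becomes $\pm 1$ puts the geodesics into the form \eqref{9}. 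Finally I would identify the causal types: along $\gamapm$ one finds $ds^{2}=c_{0}(\alpha)\,\tau^{4}+\dots$ with $c_{0}$ affine in $\alpha$; the zero of $c_{0}$ occurs at $\alpha=1$, which is exactly the orbit lying on the invariant surface $\FF$ (where $\Del\le 0$ and the arc length is undefined), i.e.\ the isotropic geodesic, and the sign of $c_{0}$ shows $\gamap$ is timelike for $\alpha>1$ and spacelike for $\alpha<1$; the family $\gamam$ lies over the Riemannian domain, where $ds^{2}$ is positive definite, so all $\gamam$ are timelike. This gives Figure~\ref{pic2} and completes the proof.

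The main difficulty is the resonance $\la_1=2\la_2$: a priori the smooth normal form of \eqref{5} could retain the resonant term $\beta v^{2}$, and then the orbits --- and with them the functions $X_\alpha,Y_\alpha$ in \eqref{9} --- would contain logarithms and fail to be smooth. The crux is therefore to prove $\beta=0$, i.e.\ that \eqref{5} is smoothly linearizable transverse to the criminant. I expect this to follow from the special structure of \eqref{5}: its $x$-component $2\Del$ is free of $p$, its $y$-component is $p$ times a $p$-independent factor, and $\mu_0$ vanishes on $\D$; bookkeeping the $p$-degrees through the normalizing transformations then shows that no pure $v^{2}$-term can ever be produced in $\dot u$, so $\beta=0$. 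Alternatively one can avoid the normal form altogether and establish $x=p^{2}\xi(p)$, $y=p^{3}\eta(p)$ with smooth $\xi,\eta$ directly, by a contraction or majorant argument applied to the integral equations for $x(p)$ and $y(p)$.
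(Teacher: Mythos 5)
You are proving a theorem the paper itself does not reprove (it is imported from \cite{Rem-Pseudo}), so the comparison is with the argument there rather than with a proof in this text. Your skeleton is the right one: in the coordinates of Remark~\ref{Rem1} the criminant is a curve of equilibria of \eqref{5}; differentiating \eqref{7} gives $M_p(q,p_0)=\Del_x+p_0\Del_y$, so by Proposition~\ref{Pro1}(viii) the spectrum at $(q,p_0)$ is $(2\la,\la,0)$ with $\la=M_p(q,p_0)\neq 0$; the problem then reduces to a $2{:}1$ resonant node transverse to the one-dimensional centre manifold (the criminant), whose orbits project to the semicubic parabolas \eqref{9}. Your identification of the resonant monomial as the sole obstruction to the smoothness of $X_\alf,Y_\alf$ is also exactly right.

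The genuine gap is at the point you yourself flag as the crux: you never prove that the resonant coefficient $\beta$ (the function $\phi(\zeta)$ in the normal form \eqref{52} with $m=2$) vanishes. The ``bookkeeping of $p$-degrees'' heuristic does not work as stated: the normalizing changes of variables mix $p$ into the $x$-component (already $x\mapsto x+\gam p^2$ feeds the term $a_xc\,p$ of $\dot p$ into $\dot x$ as a $p^2$-term), so the absence of $p$ from $2\Del$ does not by itself exclude a resonant $v^2$-term; and the alternative ``contraction or majorant argument'' is named but not carried out --- the reduced equation $dx/dp=\tfrac{2x}{p}(1+O(p))$ is exactly resonant, and a generic perturbation of $dx/dp=2x/p$ does produce $p^2\ln p$. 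The missing step is available inside this paper: Lemma~\ref{PL1} reduces $\beta=0$ to exhibiting a single $C^{2}$ integral curve of \eqref{5} through $(q,p_0)$ tangent to the $\la_2$-eigenvector, which in your coordinates is $\pa_p$. Such a curve is supplied by the lift of the isotropic geodesic to the invariant surface $\FF$: by Proposition~\ref{Pro2} (case $C$) equation \eqref{3} has the Cibrario normal form $p^2=x$, so this lifted curve is smooth and is vertical at $(q,p_0)$. This is precisely the device used in the proof of Theorem~\ref{T3} to kill the analogous resonant term in the case $Z$. Once that is inserted, the rest of your argument (the interchange of $x$ and $y$, the parametrization of $\Gamo$ by the sign and modulus of $c$, and the causal types, with all $\gamam$ timelike because the metric is positive definite on the Riemannian side) closes routinely.
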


\begin{figure}[ht]
\begin{center}
\includegraphics[height=4.1cm]{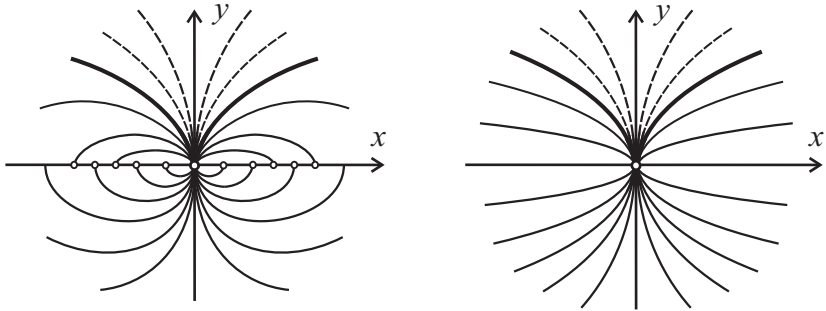}
\end{center}
\caption{
Two examples of configurations of the geodesics outgoing from $q\in \D$ in Theorem~\ref{T2}.
Here the discriminant curve $\D$ coincides with the $x$-axis.
Timelike, spacelike and isotropic geodesics are solid, dashed and bold solid lines, respectively.
}
\label{pic2}
\end{figure}

\begin{remark}
{\rm
1.
For any $\alf \neq 0$ formula \eqref{9} defines
a semi-cubic parabola, while for $\alf = 0$ we have the limiting case of the semi-cubic parabolas:
the two branches are glued together to form the geodesics $\gam^{\pm}_0$
which are the two semi-axes $y>0$ and $y<0$.

2. An interesting problem is to determine whether a geodesic starting from a point $q \in \D$
returns to the discriminant curve $\D$. For instance, two examples of different behavior of geodesics
are presented in Figure \ref{pic2}.
This problem is studied in \cite{Rem15} for metrics possessing differentiable groups of symmetries;
see also \cite{GKT, KT} for related work.
}
\label{Rem2}
\end{remark}


\section{The main results}

We deal here with the cases  $D_n, D_f, D_s$ and $Z$ in Table \ref{Tab1} and determine the
behavior of the geodesics near a point $q\in \D$, where the cubic polynomial $M(q,p)$
has the double root $p_0=p_1$.

From now on, we will assume that in the cases $D_n, D_s$
between the numbers $\eps_{1}, \eps_{2}, 1$ there are no non-trivial integer relations
\begin{equation}
 \begin{aligned}
 s_1\eps_1 + s_2\eps_2 + s_3 &= \eps_j, \\ 
 s_1\eps_1 + s_2\eps_2 + s_3 &= 1, \\
 \end{aligned}
\qquad
|s| := \sum_{i=1}^3 s_i \ge 1, \ \ s_i \in \bZ_+.
\label{10}
\end{equation}
Clearly, this condition implies the absence of the resonances \eqref{8} in Proposition~\ref{Pro2}.

Thus in all cases $D_n, D_f, D_s$ and $Z$ there exist smooth local coordinates such that
equation \eqref{3} has the normal form
$p^2 = r(x,y)$, where $r(x,y)=y-\eps x^2$.
The cases $D_n, D_f, D_s$ and $Z$ correspond, respectively, to
$0 < \eps < \frac{1}{16}$, $\eps > \frac{1}{16}$, $\eps < 0$ and $\eps = 0$.
Then the metric \eqref{1} has the form
\begin{equation}
ds^2 = \om(x,y) \, (r(x,y) dx^2 - dy^2) + \Theta, \ \ \om(0,0)=-1, \ \ r(x,y)=y-\eps x^2,
\label{11}
\end{equation}
where $\om$ is a smooth function and $\Theta$ is a smooth metric of the form \eqref{1}
whose coefficients are identically zero in the Lorenzian domain.
Consequently, its coefficients are infinitely flat on the discriminant curve $r(x,y)=0$.
The set of such functions
is the ideal $\MM^{\infty}(r) \subset \<r^n\>$ with any $n \in \bN$
(in the ring of smooth functions).

The assumption $\om(0,0)=-1$ can be achieved by multiplication of the metric by a non-zero constant,
which does not change geodesics.
Then $y>\eps x^2$ (resp. $y<\eps x^2$) is the Lorenzian (resp. Riemannian) domain of the surface.
We shall distinguish geodesics outgoing from the origin to the region $y>\eps x^2$
(resp. $y<\eps x^2$) using the superscripts $+$ (resp. $-$).

After dividing by appropriate factor $(-\om + \ldots)$, the field \eqref{5}
is
\begin{equation}
2\om r \pa_x + 2p\om r \pa_y + \bar{M}\pa_p,
\label{12}
\end{equation}
with
$
\bar{M}(x,y,p)= \om_x p^3 + (r\om_y+2\om)p^2 - (r\om_x +2\eps x \om) p - r(r\om_y+\om) + \ldots,
$
where the dots mean terms that belong to the ideal $\MM^{\infty}(r)$.

The singular points of the field \eqref{12} are given by the equations $r(x,y)=0$ and $\bar{M}(x,y,p)=0$.
On the discriminant curve $r(x,y)=0$ and away from the origin, the cubic polynomial
$\bar{M} = p (\om_x p^2 + 2\om p - 2\eps x \om)$
has the simple isotropic root $p_0=0$ and two more distinct real roots $p_{1},p_{2}$
being the roots of the quadratic polynomial $\om_x p^2 + 2\om p - 2\eps x \om$
whose discriminant is strictly positive on $r(x,y)=0$ away from the origin.

Hence in the cases $D_n, D_f, D_s$  at every point on the discriminant curve except the origin
there are three different admissible directions $p_0, p_1, p_2$, while at the origin two of them coincide:
$p_0 = p_1=0$ and $p_2=-2\om /\om_x$.
In the case $Z$ we have $p_0 = p_1=0$ and $p_2=-2\om /\om_x$ at all points on the discriminant curve.
The case $\om_x = 0$ is also included:
then $\bar{M}$ considered as a cubic polynomial over $\bR P^1 = \bR \cup \infty$
has a simple non-isotropic root $p_2=\infty$.

In all cases $D_n, D_f, D_s$ and $Z$, Theorem~\ref{T1} establishes the existence of a unique
smooth geodesic passing through the origin with non-isotropic admissible direction $p_2$, while
Theorem~\ref{T2} is not applicable for the double isotropic direction $p_0 = p_1=0$,
since the spectrum of the linear part of the field \eqref{12} at the origin contains three zero eigenvalues.

To study the behavior of the geodesics passing through the origin
with the double isotropic direction $p_0 = p_1=0$, we consider the blowing up
\begin{equation}
\Psi \colon (x,u,p) \mapsto (x,y,p), \ \ y = \eps x^2 +up^2, \ \ u \in \bR P^1 = \bR \cup \infty.
\label{13}
\end{equation}
The mapping $\Psi$ is one-to-one except on the plane $\Pi = \{(x,u,p) \colon p=0\}$,
whose image is the line $\Psi (\Pi) = \{ (x,y,p) \colon y-\eps x^2 = p = 0 \}$.
The mapping $\Psi$ is a local diffeomorphism at all points except on $\Pi$. It has an inverse defined
on $\mathbb R^3\setminus \Psi (\Pi)$ given  by
$$\Psi^{-1}(x,y,p) = \biggl(x, \frac{r(x,y)}{p^2},p \biggr), \quad r=y-\eps x^2.$$

Observe that there are no geodesics corresponding to integral curves of the field \eqref{12}
coinciding with the curve $\Psi (\Pi)$.
Indeed, if $\Psi (\Pi)$ is an integral curve of \eqref{12}, then
the identities $y \equiv \eps x^2$, $p \equiv 0$ hold.
Form the first of them we have $p \equiv 2\eps x$, which contradicts to the second one if $\eps \neq 0$.
The remaining case $\eps = 0$ is more complicated, and we give the proof of this statement
in Appendix~B using the equation of naturally parametrized geodesic.

Away from $\Psi (\Pi)$, the map $\Psi^{-1}$ sends the isotropic surface $\FF$
which is an invariant surface of the field \eqref{12} to the plane $u=1$.
Hence, the field \eqref{12} corresponds to a smooth field in the $(x,u,p)$-space (away of $\Pi$)
which has $u=1$ as invariant plane.
Taking this into account, we obtain from \eqref{12} and \eqref{13} that
the last field, after dividing by the common factor $p$, is
\begin{equation}
2\om up \pa_x + \bigl(pM_1 - 2\eps x\om + \ldots) \pa_p + (u-1)\bigl(2uN_1 + \ldots) \pa_u,  \\
\label{14}
\end{equation}
with
$$
\begin{array}{l}
M_1(x,u,p) = p(up \om_y + \om_x)(1-u) + \om (2-u), \\
N_1(x,u,p) = up^2 \om_y + p\om_x + \om,
\end{array}
$$
where
$\om, \om_x, \om_y$ are evaluated at $(x,\eps x^2+up^2)$ and
the dots mean terms that belong to the ideal $\MM^{\infty}(up)$.

Since the first and second components of \eqref{14} vanish on the $u$-axis ($x=p=0$),
the $u$-axis is the unique integral curve of the field \eqref{14} passing through any
point satisfying the conditions $x=p=0$ and $u(u-1)N_1(x,u,p) \neq 0$.
From $y = \eps x^2 +up^2$ it follows that the map $\pi \circ \Psi (x,u,p) = (x,y)$
sends the $u$-axis in the $(x,u,p)$-space to the origin in the $(x,y)$-plane.
Consequently, the $u$-axis does not correspond to a geodesic.
This leads to the necessary condition $u(u-1)N_1(0,u,0) = 0$.
Since $N_1(0,u,0) = \om(0,0) \neq 0$, we get only two admissible values: $u=0$ and $u=1$.

\begin{lemma}
To the admissible value $u=0$ does not correspond any geodesic passing through the origin in the $(x,y)$-plane
and tangent to the isotropic direction.
\label{Lem31}
\end{lemma}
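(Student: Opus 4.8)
The plan is to analyze the local structure of the blown-up field \eqref{14} near the point $(x,u,p)=(0,0,0)$ and show that no integral curve through this point projects down to a genuine geodesic through the origin in the $(x,y)$-plane. First I would compute the linear part of \eqref{14} at $(0,0,0)$. Using $N_1(0,0,0)=\om(0,0)=-1$ and $M_1(0,0,0)=2\om(0,0)=-2$, the third component $(u-1)(2uN_1+\ldots)$ has linear part $(-1)\cdot 2u\cdot(-1)=2u$ wait — more carefully, $(u-1)\approx -1$ and $2uN_1\approx -2u$, so the $\pa_u$-component has linear part $2u$; the $\pa_p$-component $pM_1-2\eps x\om+\ldots$ has linear part $-2p+2\eps x$ (since $M_1\approx -2$ and $-2\eps x\om\approx 2\eps x$); and the $\pa_x$-component $2\om up$ is quadratic, hence has zero linear part. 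So the linearization at $(0,0,0)$ has eigenvalues $0$ (along $x$), $-2$ (along $p$, roughly), and $2$ (along $u$), i.e. a hyperbolic $2$-dimensional part transverse to a one-dimensional center direction along the $x$-axis.

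The key point is that the $x$-axis is itself the invariant center manifold direction locally: along $x=p=0$ the first two components of \eqref{14} vanish identically, so $\{x\text{-axis}\}$ — i.e. $\{p=0, u=0\}$ is... no: the $u$-axis is $\{x=p=0\}$ with $u$ free, while the admissible point is $u=0$ on it. I would instead argue as follows. Restrict attention to a neighborhood of $(0,0,0)$. The plane $\{u=0\}$ need not be invariant, but I claim that the only integral curves approaching $(0,0,0)$ that could correspond to a geodesic through the $(x,y)$-origin are those lying in (or asymptotic to) the stable/unstable manifold structure at $(0,0,0)$; and I would show, via the signs of the eigenvalues and the fact that the $\pa_x$-component is $O(|p||u|)$, that any integral curve tending to $(0,0,0)$ with $u\to 0$ has $x(\tau)\to 0$ as well, so that its image under $\pi\circ\Psi$ is a curve tending to the $(x,y)$-origin but tangent there to a \emph{non-admissible} direction, or else degenerates to the origin itself — contradicting Proposition~\ref{Pro1}(viii) / the admissibility analysis, which forced $u\in\{0,1\}$. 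Concretely: near $u=0$ the $\pa_u$-equation reads $\dot u = 2u(1+O(\cdot))$, so $u$ grows exponentially unless $u\equiv 0$; but $u\equiv 0$ forces (from $y=\eps x^2+up^2$) the image curve to satisfy $y=\eps x^2$, i.e. to lie on the discriminant curve, hence it is not a geodesic outgoing from $q$ into the surface with the isotropic tangent. This is essentially the same mechanism already used in the excerpt to rule out the $u$-axis and the curve $\Psi(\Pi)$.

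The main obstacle will be the case $\eps=0$ and, more generally, controlling the flat terms in $\MM^\infty(up)$: when $\eps=0$ the $\pa_p$-component loses its $2\eps x\om$ term and becomes $pM_1+\ldots$, so $p=0$ becomes (to infinite order) invariant and the hyperbolic picture degenerates along $\{p=0\}$. In that case I would fall back on the argument already promised for $\Psi(\Pi)$ in Appendix~B, using the naturally parametrized geodesic equation to show directly that an integral curve with $u\to 0$ either has $y\equiv \eps x^2$ to sufficient order (hence lies on $\D$ and is the singular solution $Z$, not a geodesic) or does not pass through the origin at all. A secondary technical point is to make precise the claim ``corresponds to a geodesic'': I would phrase it as in the excerpt — an integral curve of \eqref{14} corresponds to a geodesic through the $(x,y)$-origin tangent to $p_0=0$ only if it is not contained in $\pi\circ\Psi^{-1}(\text{origin})$ and its $\Psi$-image is an integral curve of \eqref{12} on $\FF$ — and then show the $u=0$ branch fails one of these, completing the proof.

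$\Box$
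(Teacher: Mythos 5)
Your setup is right: the linearization of \eqref{14} at $(0,0,0)$ has spectrum $(0,-2,2)$, matching the paper's $\la_1=2\om$, $\la_2=-2\om$ on the curve of singular points, and your core mechanism --- $\dot u = 2u(1+O(\cdot))$, so $|u|$ is strictly monotone along nearby orbits with $u\neq 0$ --- is sound and can be turned into a proof. But as written there are two genuine gaps. First, monotone growth of $|u|$ only excludes orbits converging to $(0,0,0)$ in \emph{forward} time with $u\not\equiv 0$; it does not exclude orbits reaching the origin in \emph{backward} time, for which $u\to 0$ is exactly what forward exponential growth produces. You must show that the only such orbit is the $u$-axis (it is invariant, tangent to the eigenvector of $+2$, and --- by the saddle-times-center structure that the paper extracts from Theorem~\ref{PT4} applied to the form \eqref{48} --- it is the unique backward-converging orbit), and then note that $\pi\circ\Psi$ sends it to the single point $(0,0)$. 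Your passing remark that the $u$-axis is ``already ruled out in the excerpt'' does not cover this: the pre-lemma argument applies only at points with $u(u-1)N_1\neq 0$, i.e.\ away from $u=0$. Second, in the forward-time case you conclude $u\equiv 0$ and dismiss the orbit because its image ``lies on the discriminant curve''; that quietly assumes the discriminant curve is not a geodesic tangent to $p_0$, which you have not proved. The clean observation is that on the invariant plane $\{u=0\}$ the first component $2\om up$ of \eqref{14} vanishes identically, so $x$ is constant along such orbits and their image under $\pi\circ\Psi$ is again a single point. The intermediate paragraph (``tangent to a non-admissible direction \dots contradicting Proposition~\ref{Pro1}(viii)'') does not establish anything and should be removed.

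For comparison, the paper's proof recognizes \eqref{14} at $(0,0,0)$ as a partially hyperbolic germ of the form \eqref{48} with resonance $\la_1+\la_2=0$, invokes Theorems~\ref{PT4} and \ref{PT6} to conclude that exactly two integral curves pass through the origin, identifies them as the $p$-axis and the $u$-axis, and observes that both project to the point $(0,0)$. Your direct estimate on $\dot u$, once the two gaps above are filled, reaches the same conclusion slightly more elementarily. Finally, your concern about $\eps=0$ is misplaced for this lemma: the transverse spectrum $(-2,2)$, the invariance of $\{u=0\}$ and of the $u$-axis, and the monotonicity of $|u|$ are all independent of $\eps$ (only the $\pa_p$-component changes), so no separate treatment via Appendix~B is needed here; the distinction between $\eps=0$ and $\eps\neq 0$ only matters for the subsequent analysis at $u=1$.
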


\begin{proof}
The germ of the field \eqref{14} at the origin in the $(x,u,p)$-space has the form \eqref{48} with $n=3$
(Appendix~A).
Indeed, its components belong to the ideal $I$ generated by the components
$\bigl(pM_1 - 2\eps x\om + \ldots)$ and $(u-1)\bigl(2uN_1+ \ldots)$, and
the center manifold $W^c$, given by the equations $pM_1 - 2\eps x\om = 0$ and $u=0$,
consists of the  singular points of the field \eqref{14}.
The spectrum of the linear part of \eqref{14} at any singular point is $(\la_1,\la_2,0)$
with $\la_{1}=2\om$ and $\la_2 = -2\om$,
so the resonance $\la_1+\la_2=0$ holds.
It follows by Theorem \ref{PT4}  (Appendix~A) that the germ \eqref{14} is topologically equivalent to
$\xi \pa_{\xi} - \eta \pa_{\eta}$.
By Theorem \ref{PT6}  (Appendix~A), it is smoothly orbitally equivalent
to the normal form \eqref{4.15}.
Moreover, if the 2-jet of \eqref{14} is generic,
it is smoothly orbitally equivalent to \eqref{4.16}, that is,
$\xi \pa_{\xi} - \eta \pa_{\eta} +  \xi\eta \pa_{\zeta}$.

Any of these normal forms shows that there are only two integral curves passing through the origin, which
coincide with the $\xi$-axis and $\eta$-axis in the normal forms coordinates.
On the other hand, it is easy to see that the $p$-axis and $u$-axis are integral curves of the field \eqref{14}
in the initial coordinates $(x,u,p)$. Therefore, the $p$-axis and the $u$-axis are the only integral curves
of the field \eqref{14} passing through the origin.
The map $\pi \circ \Psi (x,u,p) = (x,y)$
sends the $p$-axis and $u$-axis in the $(x,u,p)$-space to the origin in the $(x,y)$-plane.
\end{proof}

We consider now the admissible variable $u=1$ and study the phase portrait of the field \eqref{14}
in a neighborhood of its singular point $(x,u,p) = (0,1,0)$.

Dividing the field \eqref{14} by the non-vanishing germ $(2uN_1 + \ldots)$
and making the affine change of variable $v=u-1$, we transform \eqref{14} into
\begin{equation}
p A(x,v,p) \pa_x + \bigl(\tfrac{1}{2}pB_1(x,v,p) - \eps x B_2(x,v,p)\bigr)\pa_p + v \pa_v,
\label{16}
\end{equation}
where $A$ and $B_{1}, B_{2}$ are smooth functions such that
$$
A(x,0,0) \equiv B_{1}(x,0,0) \equiv B_{2}(x,0,0) \equiv 1.
$$
It is worth observing that \eqref{16} has an isolated singularity at the origin
in the cases $D_s,D_n,D_f$ (when $\eps \neq 0$)
and a non-isolated singularity in the case $Z$ (when $\eps = 0$).
We deal with the cases $D_s,D_n,D_f$ and $Z$ separately, and start with the case $Z$.


\subsection{The case $Z$}\label{Z}

\begin{theorem}
\label{T3}
Let $S$ be a surface with a pseudo-Riemannian metric \eqref{1} and let $q \in \D$ be a point satisfying
the conditions $Z$ in Table~\ref{Tab1}.
Then to the isotropic direction $p_0$ corresponds a one-parameter family $\Gamo$ of smooth geodesics
outgoing from $q$ into the Lorenzian domain, while
there are no geodesics outgoing from $q$ into the Riemannian domain.

There exist smooth local coordinates centered at the origin such that
the metric has the form \eqref{11}
and the geodesics $\gamap \in \Gamo$ outgoing from the origin are
\begin{equation}
y = \frac{1}{4}x^2 + \alf x^4 (1 + Y_{\alf}(x)), \ \ \,  Y_{\alf}(0)=0, \ \  \alf \in \mathbb R.
\label{17}
\end{equation}
The family \eqref{17} contains all three types of geodesics:
timelike if $\alf>0$, spacelike if $\alf<0$ and isotropic if $\alf=0$;
see {\rm Figure \ref{pic3}}, right.
\end{theorem}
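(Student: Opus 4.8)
The plan is to work with the field \eqref{16} obtained from \eqref{14} after the blowing up \eqref{13}, specialized to the case $Z$ where $\eps = 0$. Setting $\eps = 0$ in \eqref{16} we get a field of the form
\begin{equation*}
p\, A(x,v,p)\, \pa_x + \tfrac{1}{2}p\, B_1(x,v,p)\, \pa_p + v\, \pa_v,
\end{equation*}
with $A(x,0,0)\equiv B_1(x,0,0)\equiv 1$. The $x$-axis (where $v=p=0$) is a curve of singular points; this is the non-isolated singularity mentioned before the subsection. The first step is a center-manifold type reduction: along $v=0$ the two eigenvalues transverse to the $x$-axis are $0$ (the $p$-direction, since the $\pa_p$ component is $\tfrac{1}{2}pB_1 + \ldots$ and has no $x$-linear term when $\eps=0$) and $1$ (the $v$-direction). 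Actually both $\pa_x$ and $\pa_p$ components carry a factor $p$, so the linear part at a point $(x_0,0,0)$ has rank one, with the single nonzero eigenvalue $1$ in the $v$-direction. Hence there is a two-dimensional center manifold containing $v=0$, on which the reduced field has the form $p\widetilde{A}\,\pa_x + \tfrac{1}{2}p\widetilde{B}_1\,\pa_p$; dividing by $p$ (which only reparametrizes the integral curves, as is done throughout the paper) gives $\widetilde{A}\,\pa_x + \tfrac{1}{2}\widetilde{B}_1\,\pa_p$ with $\widetilde A(0,0)=1$, $\widetilde B_1(0,0)=1$. This is a regular (nonsingular) vector field near the origin in the $(x,p)$-plane, so its integral curves form a smooth regular foliation; through the origin there is a unique such curve, and the one-parameter family of all integral curves is parametrized by where they cross, say, the $p$-axis — this is the parameter $\alf$. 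Pulling back by $\Psi$ (so $y = up^2$ with $u=1+v$, i.e. $y=p^2$ near $v=0$) and by $\pi$, and using $p = dy/dx$, one recovers the geodesics in the $(x,y)$-plane.

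The second step is to extract the explicit leading-order form \eqref{17}. From the reduced regular field $dx/dp = 2\widetilde A/\widetilde B_1$, with the right-hand side equal to $2$ at the origin, we get $x = 2p + O(p^2)$ along the solution through the origin, hence $p = \tfrac12 x + O(x^2)$; then $y = p^2 = \tfrac14 x^2 + O(x^3)$ along that solution (the isotropic geodesic, $\alf = 0$). For the full family, the crossing parameter $\alf$ enters as an additive constant in the appropriate solution of the linear(ized) problem, and a careful bookkeeping of the next orders — or, cleaner, applying the flow-box theorem to the regular field and then undoing the coordinate changes — shows each geodesic has the form $y = \tfrac14 x^2 + \alf x^4(1 + Y_\alf(x))$ with $Y_\alf(0)=0$. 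The $x^4$ power (rather than $x^3$) should drop out of a parity/symmetry observation: the relevant solutions, written in the blown-up coordinates, are even-type in the appropriate variable, so the first correction term to $y - \tfrac14 x^2$ is of order $x^4$; I would verify this by plugging the ansatz $y = \tfrac14 x^2 + c_3 x^3 + c_4 x^4 + \cdots$ into the geodesic equation (equivalently into $p = y'$ combined with $dp/dx = \tfrac12 B_1/A$) and checking that the $c_3$-coefficient equation forces $c_3 = 0$.

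The third step is the dichotomy between the two sides: all these geodesics lie in $y \ge \tfrac14 x^2$ (the Lorenzian domain $r>0$, since $y - \tfrac14 x^2 = \alf x^4(1+\ldots)$ can only be made $\ge 0$ near $0$... wait, for $\alf<0$ it is negative). Let me restate: $y - \tfrac14 x^2 = \alf x^4(1+Y_\alf)$, which has the sign of $\alf$; so $\alf>0$ gives timelike geodesics in the Lorenzian domain $r>0$, $\alf<0$ gives... $r<0$, the Riemannian domain — but the theorem says there are no geodesics into the Riemannian domain. The resolution is that "outgoing into the Lorenzian domain" must be understood in the sense that the geodesic immediately enters $r>0$: I would recheck the sign of $r$ versus $y-\eps x^2$ under the normalization $\om(0,0)=-1$. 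Given $r = y - \tfrac14 x^2$ in the final normal form with $\eps = \tfrac14$ (the value for case $Z$ is $\eps=0$, but the coefficient in \eqref{17} is $\tfrac14$, matching $p_2 = -2\om/\om_x$ and the earlier computation $p_0=p_1=0$); the metric is $ds^2 = \om(r\,dx^2 - dy^2) + \Theta$ with $\om(0,0)=-1$, so along a curve tangent to $p_0=0$ one computes $ds^2$ to leading order and the sign of $ds^2$ determines timelike/spacelike — I would carry out this one-line computation to pin down that $\alf>0\Leftrightarrow$ timelike, $\alf<0\Leftrightarrow$ spacelike, $\alf=0\Leftrightarrow$ isotropic, and that all of \eqref{17} genuinely lies on the Lorenzian side emanating from $q$ (the curves with $\alf<0$ being spacelike geodesics that still start at $q$ — the statement "no geodesics into the Riemannian domain" then refers to geodesics whose \emph{germ} at $q$ lies in $r<0$, which none of \eqref{17} does because each such curve is tangent to $r=0$ to order $3$ and the deviation $\sim x^4$ is an \emph{even} perturbation crossing back). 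Finally, combine with Lemma~\ref{Lem31} (the value $u=0$ contributes nothing) to conclude that \eqref{17} is the complete list of geodesics through $q$ tangent to the isotropic direction.

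\textbf{Main obstacle.} The delicate points are: (i) getting the \emph{exact} power $x^4$ and the smoothness/structure of $Y_\alf$ — this needs the flow-box theorem for the reduced regular field plus a parity argument to kill the $x^3$ term, not just a formal power-series count; and (ii) correctly interpreting and proving the "no geodesics into the Riemannian domain" claim in the presence of the $\alf<0$ spacelike geodesics in \eqref{17}, which requires being careful about what "outgoing into a domain" means for a curve tangent to $\D$ to high order. The center-manifold reduction itself is routine given the eigenvalue structure ($0$ with multiplicity $2$ along the $x$-axis when $\eps=0$, versus the isolated singularity with spectrum containing a nonzero pair when $\eps\ne 0$), and the division by $p$ is exactly the reparametrization used elsewhere in the paper.
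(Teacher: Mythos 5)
Your reduction rests on a miscomputation of the linear part of \eqref{16} at the singular curve, and this error propagates through the whole argument. With $\eps=0$ the $\pa_p$-component of \eqref{16} is $\tfrac12 pB_1(x,v,p)$ with $B_1(x,0,0)\equiv 1$, so it \emph{does} contain the linear term $\tfrac12 p$; the Jacobian at a point $(x_0,0,0)$ of the curve of singular points has spectrum $(1,\tfrac12,0)$, not $(1,0,0)$, and its rank is $2$, not $1$. Hence there is no two-dimensional center manifold of the kind you invoke: the center manifold is the one-dimensional curve of singular points, and the transverse structure is a \emph{resonant node} with eigenvalue ratio $2{:}1$. This is exactly why the paper appeals to Theorem~\ref{PT5} to get the normal form $(2\xi+\phi(\zeta)\eta^2)\pa_\xi+\eta\pa_\eta$, and then to Lemma~\ref{PL1} to show $\phi\equiv 0$ (using the smooth isotropic geodesic $y=\tfrac14x^2$ as the required smooth integral curve), arriving at $2\xi\pa_\xi+\eta\pa_\eta$ and its invariant foliation $\xi=c\eta^2$. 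Your second step inherits the damage: the plane $v=0$ is indeed invariant (it is the isotropic surface $u=1$), and the field restricted to it and divided by $p$ is regular, but that regular planar field has exactly \emph{one} integral curve through the origin; the curves crossing the $p$-axis at $p_*\neq 0$ project under $\pi\circ\Psi$ to points $(0,p_*^2)\neq(0,0)$, i.e.\ to isotropic geodesics through \emph{other} points of $\D$, not to members of $\Gamo$. So parametrizing $\Gamo$ by "where the integral curves cross the $p$-axis" is wrong; the correct parameter is the leaf $v=cp^2\psi_c(x,p)$ of the invariant foliation (equivalently $y=p^2(1+cp^2\psi_c)$), each leaf carrying exactly one integral curve through the origin. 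Without constructing that foliation you obtain only the single geodesic $\alf=0$. The exponent $x^4$ then falls out of the quadratic tangency of the leaves ($v\sim cp^2$, $p\sim\tfrac12x$, so $y-p^2=vp^2\sim\mathrm{const}\cdot x^4$), not from a parity argument.

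The final discussion of signs is also off because you misidentify the discriminant curve: in case $Z$ one has $\eps=0$, so $r=y$, the discriminant is $y=0$ and the Lorentzian domain is $y>0$. Every curve \eqref{17} satisfies $y=\tfrac14x^2+\alf x^4(1+\dots)>0$ for small $x\neq 0$ regardless of the sign of $\alf$, since $\tfrac14x^2$ dominates; there is no tension between the existence of spacelike members ($\alf<0$ in your convention; the paper's convention is timelike for $\alf>0$) and the absence of geodesics entering the Riemannian domain $y<0$ --- timelike/spacelike refers to the sign of $ds^2$ along a curve lying in the Lorentzian domain, not to which side of $\D$ it lies on. Your appeal to Lemma~\ref{Lem31} and to the admissibility analysis ($u\in\{0,1\}$) for completeness is the right closing move, but the core of the proof --- the resonant $2{:}1$ normal form, the vanishing of the resonant term, and the resulting invariant foliation --- is missing.
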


\begin{proof}
The germ \eqref{16} with $\eps=0$ at the origin in the $(x,v,p)$-space has the form \eqref{48} with $n=3$ (Appendix~A).
Indeed, its components belong to the ideal $I = \<pA,v\> = \<p,v\>$
and the center manifold $W^c$, given by the equations $p=v=0$, consists of its singular points.
At any singular point sufficiently close to the origin, the spectrum of the linear part of \eqref{16} is $(1,\frac{1}{2},0)$.
Then, by Theorem \ref{PT5} (Appendix~A), the germ \eqref{16} at the origin is orbitally smoothly equivalent to
\begin{equation}
(2\xi + \phi(\zeta)\eta^2)\pa_{\xi} + \eta \pa_{\eta} .
\label{18}
\end{equation}

The restriction of the field \eqref{18} to each invariant leaf $\zeta = \const$ is a resonant node
with eigenvalues $2:1$ and associated eigenvectors $\pa_{v}, 2\pa_{x}+\pa_{p}$.
It has an infinite family $\Phi$ of integral curves all with the common tangent direction $2\pa_{x}+\pa_{p}$
and a unique integral curve with the tangent direction $\pa_{v}$, which coincides with the $v$-axis
and does not correspond to a geodesic.

By Lemma \ref{PL1} (Appendix~A), to prove that $\phi(\zeta)$ vanish on each leaf $\zeta = \const$, it is
sufficient to produce a smooth integral curve of the family $\Phi$. We use the integral curve $x=2p$, $v=0$
corresponding to the isotropic geodesic $y=\frac{1}{4}x^2$
which is a solution of  equation $p^2 = y$.
Hence $\phi(\zeta) \equiv 0$, and the normal form \eqref{18} becomes
\begin{equation}
2\xi \pa_{\xi} + \eta \pa_{\eta}.
\label{19}
\end{equation}

Comparing \eqref{16} with $\eps=0$ and \eqref{18}, one can show that
the conjugating diffeomorphism $(x,v,p) \mapsto (\xi, \eta, \zeta)$ can be chosen in the form
\begin{equation*}
\xi =v, \ \  \eta=pf(x,v,p), \ \  \zeta = x-2p +g(x,v,p), \ \ f(0)=1, \ \ \, g \in \MM^1,
\end{equation*}
where $\MM^k$, $k \ge 0$, is the ideal of $k$-flat functions in the ring of smooth functions.

The field \eqref{19} has the invariant foliation $\xi = c\eta^2$, $c \in \bR P^1 = \bR \cup \infty$.
Returning to the coordinates $(x,v,p)$, we get the invariant foliation of the field \eqref{16} given by
\begin{equation}
v = cp^2 f^2(x,v,p), \ \ f(0)=1.
\label{21}
\end{equation}

The value $c = \infty$ corresponds to the invariant plane $\Pi = \{p=0\}$ which does not give any geodesics.
For $c \in \bR$, solving \eqref{21} in $v$ in a neighborhood of the origin gives $v = cp^2 \psi_c(x,p)$, where $\psi_c(x,p)$ is a smooth function with $\psi_c(0,0)=1$.
Returning to the initial variables $(x,y,p)$
by mean of the map $\Psi$ (formula~\eqref{13}),
we get the invariant foliation of the field \eqref{12}:
\begin{equation}
y = p^2 (1+cp^2 \psi_c(x,p)), \ \  \ \psi_c(0,0)=1,
\label{22}
\end{equation}
see Figure~\ref{pic3} (left).
The value $c=0$ corresponds to the isotropic surface $p^2 = y$.

Observe that all invariant leaves of the foliation~\eqref{22} belong to the semiplane $y>0$.
The plane $y=0$, which does not belong to the foliation~\eqref{22}, is an invariant plane of the field \eqref{12}
foliated by the vertical lines, which are integral curves of \eqref{12}.

After dividing by the common factor $p^2$, the restriction of the field \eqref{16}
to each invariant leaf \eqref{22} is given by
\begin{equation}
\frac{dp}{dx} = \frac{\ti \om - cp^2 \psi_c(\ti \om +p\ti \om_x +y\ti \om_y)}{2\ti \om (1+cp^2 \psi_c)} =
\frac{1}{2} -  cp^2 (1+ h_c(x,p)), \ \ \ h_c \in \MM^0,
\label{23}
\end{equation}
where $\ti \om$, $\ti \om_x$, $\ti \om_y$ are the restrictions of $\om$, $\om_x$, $\om_y$ to the
given leaf.
In a neighborhood of the origin, \eqref{23} defines a smooth and regular direction field,
and every leaf \eqref{22} contains a unique smooth integral curve of the field \eqref{16}
passing through the origin in the $(x,y,p)$-space with the tangent direction $\pa_x$,
see Figure~\ref{pic3} (center).
Projecting these integral curves to the $(x,y)$-plane (equivalently, integrating equation \eqref{23}),
we get a one-parameter family of geodesics in the statement of the theorem with $\alf = -\frac{c}{48}$,
see Figure \ref{pic3} (right).
\end{proof}

\begin{figure}[ht]
\begin{center}
\includegraphics[height=4.5cm]{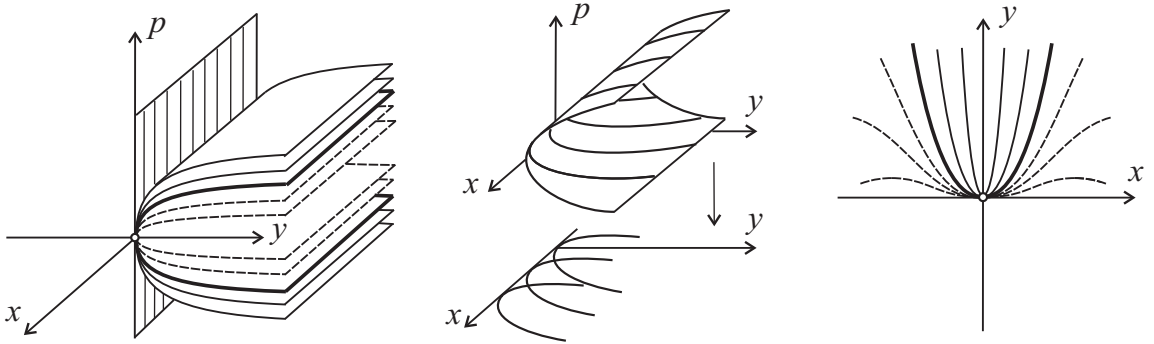}
\end{center}
\vspace{-1ex}
\caption{The case $Z$: invariant foliation \eqref{22} of the field \eqref{12} (left),
integral curves of \eqref{12} on an invariant leaf (center) and geodesics $\gamap \in \Gamo$ (right).
Here the discriminant curve $\D$ coincides with the $x$-axis.
Timelike, spacelike and isotropic geodesics are solid, dashed and bold solid lines, respectively.
}
\label{pic3}
\end{figure}

To illustrate Theorem~\ref{T3}, we consider the simplest example $ds^2 =  dy^2 - y dx^2$.
In this case, equation \eqref{5} can be studied using qualitative methods,
see, for example, \cite{Rem15} (Section~3).
The Lagrangian of the length functional $L=\sqrt{p^2-y}$ does not depend on the variable $x$,
hence the field \eqref{5} possesses the energy integral $H = L-pL_p$.
After evident transformations, equation $H = \const$ can be reduced to
\begin{equation}
p^2 = y - \alf y^2, \ \ \, \alf \in \bR,
\label{43000}
\end{equation}
which is a family of implicit differential equations of Clairaut type.

Every (unparametrized) geodesic in the metric $ds^2 =  dy^2 - y dx^2$ is a solution of equation \eqref{43000}.
Conversely, every solution of \eqref{43000} is a geodesic except the horizontal lines $y \equiv \const$,
each of which is the envelop of the family of integral curves of \eqref{43000} for a given $\alf$
(see \cite{Rem15}).
For instance, the value $\alf=0$ corresponds to the isotropic surface $p^2 = y$ (a parabolic cylinder)
and gives the isotropic geodesic $y=\frac{1}{4}x^2$ passing through the origin.
For determining non-isotropic geodesics, observe that every invariant surface \eqref{43000}
is a cylinder whose generatrices are parallel to the $x$-axis
and the base is an ellipse (if $\alf>0$) or a hyperbola (if $\alf<0$).
In the last case, the hyperbolic cylinder $p^2 = y-\alf y^2$ consists of two connected components:
{\it positive} and {\it negative} lying in the domains $y \ge 0$ and $y \le \alf^{-1}$, respectively.
See Figure \ref{pic5}, left.

Positive components of the hyperbolic cylinders ($\alf<0$) together with all other cylinders ($\alf \ge 0$)
form an invariant foliation of the form \eqref{21} (Figure \ref{pic5}, left).
Negative components of the hyperbolic cylinders do not intersect the plane $y=0$, and consequently, they do not contain
integral curves whose projections to the $(x,y)$-plane are geodesics passing through the $x$-axis.
Moreover, from \eqref{43000} it follows that $u=y/p^2 = 1/(1-\alf y)$, which shows that
$u \to 1$ along any geodesic
tending to the $x$-axis with isotropic tangential direction.
This gives a graphical explanation why $u=1$ is the only admissible value of the variable $u$ at $y=p=0$.

Thus to every $\alf \ge 0$ corresponds a geodesic $\gamap \in \Gamo$ which is timelike if $\alf>0$ or isotropic if $\alf=0$.
To every $\alf < 0$ corresponds a spacelike geodesic $\gamap \in \Gamo$, whose lift belongs
to the positive component of the hyperbolic cylinder $p^2 = y-\alf y^2$.
In contrast to this, the negative component of the same cylinder is filled with integral curves
of the field \eqref{5} whose projections on the $(x,y)$-plane are separated from the $x$-axis by
the horizontal strip $\alf^{-1}<y<0$. See Figure \ref{pic5}, right.

\begin{figure}[ht]
\begin{center}
\includegraphics[height=4.6cm]{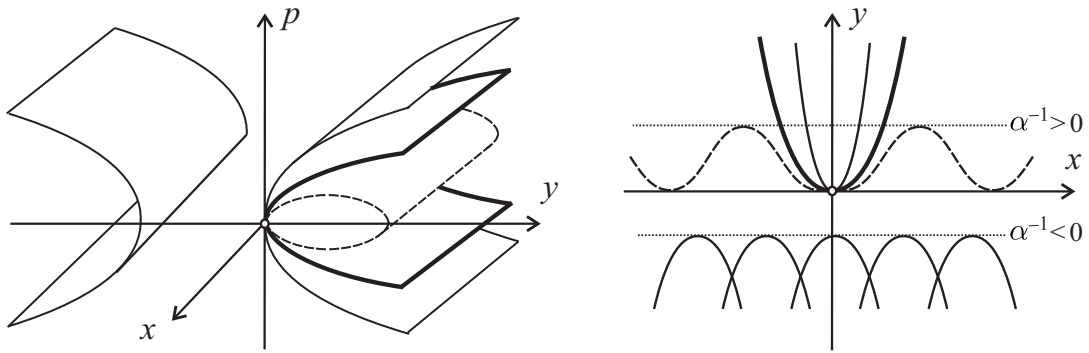}
\end{center}
\vspace{-1ex}
\caption{
The invariant foliation $p^2 = y-\alf y^2$ in the $(x,y,p)$-space (left)
and the corresponding geodesics (right).
Timelike, spacelike and isotropic geodesics are solid, dashed and bold solid lines, respectively.
}
\label{pic5}
\end{figure}


\subsection{The cases $D_s, D_n, D_f$}\label{Dsnf}

We take the metric as in \eqref{11} with $\eps \neq 0, \frac{1}{16}$ near the point $q \in \D$.
The germ \eqref{16} with $\eps \neq 0$ at the origin is hyperbolic
with the spectrum $(1, \eps_1,\eps_2)$,
where $(\eps_1,\eps_2)$ is the spectrum of the field \eqref{4} at the origin, so
$\eps_1 + \eps_2 = \frac{1}{2}$ and $\eps_1 \eps_2 = \eps$.
The corresponding eigenvectors are $\pa_v$, $\pa_x+\eps_1\pa_p$ and $\pa_x+\eps_2\pa_p$.

Recall that we assumed that, in the cases $D_n, D_s$,
there are no non-trivial resonances \eqref{10} between $\eps_{1}, \eps_{2}, 1$.
Hence, by Theorem~\ref{PT1} (Appendix~A), the germ \eqref{16} at the origin is smoothly equivalent to
\begin{equation}
\xi \pa_{\xi} + \eps_1 \eta \pa_{\eta} + \eps_2  \zeta \pa_{\zeta}.
\label{24}
\end{equation}

\begin{remark}
{\rm
It follows by comparing \eqref{16} and  \eqref{24} that
the conjugating diffeomorphism $(x,v,p) \mapsto (\xi, \eta, \zeta)$ and its inverse
can be chosen in the form
\begin{equation}
\begin{aligned}
&
x = \eta (1+\phi_1) + \zeta (1+\phi_2), \ \ \ \,
p = \eta (\eps_1+\phi_3) + \zeta (\eps_2+\phi_4), \ \
v = \xi;
\\
&
\eta = \frac{x (\eps_2 +\psi_1) - p (1+\psi_2)}{\eps_2-\eps_1}, \ \
\zeta = \frac{x (\eps_1 +\psi_3) - p (1+\psi_4)}{\eps_1-\eps_2}, \ \
\xi  = v;
\end{aligned}
\label{25}
\end{equation}
where $\phi_i=\phi_i (\xi,\eta,\zeta)$ and $\psi_i=\psi_i(x,v,p)$ are smooth functions
vanishing at the origin.
}
\label{Rem3}
\end{remark}

\begin{theorem}
\label{T4}
Let $S$ be a surface with a pseudo-Riemannian metric \eqref{1} and let $q \in \D$ be a point satisfying
the conditions $D_s$ in Table~\ref{Tab1}.
Then to the isotropic direction $p_0$ corresponds a one-parameter family $\Gamo$ of
$C^2$-smooth geodesics outgoing from $q$ into the Lorenzian domain, while
there are no geodesics outgoing from $q$ into the Riemannian domain.

There exist smooth local coordinates centered at the origin such that
the metric has the form \eqref{11}
and the geodesics $\gamap \in \Gamo$ outgoing from the origin are
\begin{equation}
y = \frac{\eps_1}{2}x^2 + Y_{\alf}(x), \ \ \,  Y_{\alf}(x)=o(x^2), \ \ \eps_1 > \frac{1}{2}, \ \  \alf \in \mathbb R,
\label{26}
\end{equation}
together with the isotropic geodesic
\begin{equation}
y = \frac{\eps_2}{2}x^2 + Y(x), \ \ \,  Y(x)=o(x^2), \ \ \eps_2 < 0.
\label{27}
\end{equation}
The family \eqref{26} contains all three types of geodesics:
timelike if $\alf<0$, spacelike if $\alf>0$ and isotropic if $\alf=0$;
see {\rm Figure \ref{pic6}}, right.
\end{theorem}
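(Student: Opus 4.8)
The plan is to follow exactly the same scheme as in the proof of Theorem~\ref{T3}, but now using the hyperbolic normal form \eqref{24} instead of the resonant-node normal form \eqref{18}. The starting point is the field \eqref{16} with $\eps \neq 0$, which by the assumed absence of resonances \eqref{10} and Theorem~\ref{PT1} is smoothly equivalent to the linear hyperbolic field \eqref{24} with spectrum $(1,\eps_1,\eps_2)$. In the case $D_s$ we have $\eps = \eps_1\eps_2 < 0$, so we may order the eigenvalues as $\eps_2 < 0 < \tfrac12 < \eps_1 < 1$ (the inequalities $\eps_1>\tfrac12$, $\eps_2<0$ use $\eps_1+\eps_2=\tfrac12$). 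First I would describe the phase portrait of \eqref{24} near the origin: through the origin pass exactly three invariant coordinate axes (the $\eta$-axis and $\zeta$-axis, eigendirections of the two nonzero eigenvalues, and the $\xi$-axis with eigenvalue $1$), and no other integral curve passes through the origin. Using the conjugating diffeomorphism \eqref{25}, which sends $v$ to $\xi$ and has linear part $x\mapsto \eta+\zeta$, $p\mapsto \eps_1\eta+\eps_2\zeta$, I would identify these three axes in the original $(x,v,p)$ coordinates: the $\xi$-axis is the $v$-axis, which projects under $\pi\circ\Psi$ to the origin in the $(x,y)$-plane and hence gives no geodesic; the $\eta$-axis is a curve tangent to $\pa_x+\eps_1\pa_p$ lying in the plane $v=0$, i.e.\ $u=1$, i.e.\ in the isotropic surface $\FF$; the $\zeta$-axis is tangent to $\pa_x+\eps_2\pa_p$, also in $v=0$.

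Next I would produce the geodesics from the invariant foliation. The field \eqref{24} has the invariant foliation $\xi = c\,\eta^{1/\eps_1}$ transverse to the $\eta$-$\zeta$ plane; more usefully, the $\eta$-$\zeta$ plane $\{\xi=0\}$ (equivalently $v=0$, the isotropic surface) is itself invariant, and on it the two axes $\{\zeta=0\}$ and $\{\eta=0\}$ are invariant. The $\zeta$-axis, being tangent to $\pa_x+\eps_2\pa_p$ and lying in $\FF$, projects to a solution of $p^2=y$, i.e.\ to the isotropic geodesic; writing its lift as $p = \eps_2 x + O(x^2)$ and integrating $dy/dx = p$ with $y=p^2$ gives $y = \tfrac{\eps_2}{2}x^2 + o(x^2)$, which is \eqref{27}. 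For the one-parameter family \eqref{26}, I would restrict \eqref{16} to the invariant leaves of the foliation obtained by pulling back, via \eqref{25} and then via $\Psi$, the leaves $\xi = c\eta^{1/\eps_1}$ (the value $c=\infty$ giving the plane $\Pi=\{p=0\}$, which yields no geodesic). On each such leaf the restricted field, after dividing out the common vertical factor, is a smooth regular direction field near the origin with $dp/dx = \eps_1 + O(x)$ along the integral curve tangent to $\pa_x$, so each leaf contains a unique integral curve through the origin tangent to the $x$-direction; projecting to the $(x,y)$-plane and integrating $dy/dx=p$ with the leaf relation yields $y = \tfrac{\eps_1}{2}x^2 + o(x^2)$, parametrised by $\alf$ proportional to $c$. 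The sign of $\alf$ (equivalently of $c$) determines on which side of $\FF$ the leaf lies, hence whether the geodesic is timelike ($\alf<0$), spacelike ($\alf>0$) or isotropic ($\alf=0$); since $\eps_1>\tfrac12>\eps_2$ and the Lorenzian domain is $y>\eps x^2$, all leaves with $c\neq\infty$ lie over $y>\eps x^2$, so no geodesic enters the Riemannian domain.

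The smoothness claim requires care: the normalising diffeomorphism \eqref{25} and \eqref{16} itself are $C^\infty$, but the foliation $\xi=c\eta^{1/\eps_1}$ involves the fractional power $\eta^{1/\eps_1}$, and $1/\eps_1 \in (1,2)$ since $\tfrac12<\eps_1<1$. Hence each leaf, written as a graph $v$ over $(x,p)$, behaves like $p^{2/\eps_1}$, which is $C^2$ but in general not $C^3$; this is exactly why the geodesics are only asserted to be $C^2$-smooth (as opposed to the $C^\infty$ in Theorem~\ref{T3}, where the node resonance $2:1$ gave the integer power $\eta^2$). I would make this precise by showing $1 < 1/\eps_1 < 2$, deducing that $Y_\alf(x) = o(x^2)$ is $C^2$ but the third derivative may blow up, and noting that the correction terms from $\psi_i\in\MM^0$ and the flat ideal $\MM^\infty(up)$ do not improve the regularity. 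I expect this regularity bookkeeping — tracking how the non-integer exponent $1/\eps_1$ propagates through the blow-down $\Psi$ and the diffeomorphism \eqref{25} to the final graph $y=y(x)$ — to be the main technical obstacle; the rest is a direct transcription of the argument for Theorem~\ref{T3}. A secondary point to verify is that the admissible-direction analysis already done before Lemma~\ref{Lem31} rules out any geodesic other than those coming from $u=1$, so that the family $\Gamo$ together with \eqref{27} is complete.
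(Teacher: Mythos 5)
Your overall strategy --- linearise \eqref{16} to \eqref{24} via Theorem~\ref{PT1}, use the invariant foliation $\xi=\alf|\eta|^{1/\eps_1}$ coming from Lemma~\ref{PL2}, and push the resulting curves down through \eqref{25} and $\Psi$ --- is exactly the paper's. But two of your dynamical claims are wrong, and they are not harmless. First, it is false that only the three coordinate axes of \eqref{24} pass through the origin: since $\eps_2<0$ forces $\eps_1>\tfrac12>0$, the eigenvalues $1$ and $\eps_1$ are both positive, so the unstable manifold $\{\zeta=0\}$ is a two-dimensional \emph{node} and the entire one-parameter family of curves $\{\xi=\alf|\eta|^{1/\eps_1},\ \zeta=0\}$ passes through the origin tangent to the $\eta$-axis. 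This family is precisely the source of the geodesics \eqref{26}; if your phase-portrait claim were correct you would obtain only finitely many geodesics, contradicting the very statement you are proving. Second, the restriction of \eqref{16} to a leaf is \emph{not} a regular direction field after ``dividing out the common vertical factor'': unlike the case $Z$, the $p$-component of \eqref{16} contains the term $-\eps x B_2$, which does not vanish on $\{p=0\}$ when $\eps\neq 0$, so there is no common factor $p$ to divide by, and the origin is an isolated singular point of the restricted field --- a saddle, because its eigenvalues $\eps_1,\eps_2$ satisfy $\eps_1\eps_2=\eps<0$. The correct mechanism (the paper's) is to read off on each leaf the two saddle separatrices: the one common to all leaves is the $\zeta$-axis, giving your isotropic geodesic \eqref{27}; the one specific to the leaf is $\{\xi=\alf|\eta|^{1/\eps_1},\ \zeta=0\}$, tangent to $\pa_x+\eps_1\pa_p$, and it projects to the member of the family \eqref{26} with parameter $\alf$. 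Your two errors roughly cancel in the final count of geodesics, but neither step would survive as written.

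A secondary gap is your regularity bookkeeping, which rests on the inequality $\eps_1<1$. This does not follow from anything: $\eps_1+\eps_2=\tfrac12$ and $\eps_2<0$ give only $\eps_1>\tfrac12$, and $\eps_1$ can be arbitrarily large (take $\eps\to-\infty$), so $1/\eps_1$ ranges over $(0,2)$ rather than $(1,2)$. The paper sidesteps this by observing only that the map $x=\eta(1+\ti\phi_{1,\alf}(\eta))$ is $C^1$ with derivative $1$ at the origin (because $\phi_1$ vanishes there), hence invertible, which is all that is needed to write $p$ and $u$ as functions of $x$ and obtain \eqref{26} with $Y_{\alf}(x)=o(x^2)$; it does not attempt the finer H\"older analysis of the exponent $1/\eps_1$ that you propose.
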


\begin{proof}
By Lemma~\ref{PL2} (Appendix~A), equation
$c_1 |\eta|^{1/\eps_1} + c_2 |\zeta|^{1/\eps_2} + c_3 \xi = 0$
with any constants $c_i$ defines an invariant surface of the field \eqref{24}.
Since $\eps<0$, the eigenvalues $\eps_{1}$ and $\eps_{2}$ have different signs.
We assume, without loss of generality, that  $\eps_1>\frac{1}{2}$ and $\eps_2<0$.
Hence, the invariant surface $c_1 |\eta|^{1/\eps_1} + c_2 |\zeta|^{1/\eps_2} + c_3 \xi = 0$
passes through the origin if and only if $c_1=c_3=0$ or $c_2=0$. This yields the invariant foliation
\begin{equation}
\xi = \alf |\eta|^{1/\eps_1}, \ \ \alf \in \bR P^1 = \bR \cup \infty,
\label{28}
\end{equation}
where $\alf = \infty$ corresponds to the plane $\eta=0$ being the limit set of
the cylindric surfaces $\xi = \alf |\eta|^{1/\eps_1}$ as $\alf \to \infty$.
The restriction of the field \eqref{24} to each invariant leaf \eqref{28} is a saddle with the separatrices
\begin{equation*}
\begin{aligned}
\{\xi=\eta=0\} \ \, &\textrm{and} \ \,  \{\xi = \alf |\eta|^{1/\eps_1}, \ \zeta=0\} \ \
\textrm{for} \ \,  \alf \neq \infty,
\\
\{\xi=\eta=0\} \ \, &\textrm{and} \ \, \{\eta=\zeta =0\} \ \ \textrm{for} \ \, \alf = \infty,
\end{aligned}
\end{equation*}
where the separatrix $\{\xi=\eta=0\}$ is the intersection of all invariant leaves \eqref{28}.
The corresponding invariant foliation of the field \eqref{12} is as in Figure \ref{pic6} (left).

Taking into account \eqref{25}, $u=v+1$ and $y = \eps x^2 +up^2$, one can see that
the separatrix $\{\eta=\zeta =0\}$ corresponds
to the $v$-axis in the $(x,v,p)$-space and to the origin in the $(x,y,p)$-space.
The remaining separatrices $\{\xi = \alf |\eta|^{1/\eps_1}, \zeta=0\}$ and $\{\xi=\eta=0\}$
yield the family of integral curves of the field \eqref{14}
passing through the point $(0,1,0)$ in the $(x,u,p)$-space:
\begin{align}
\label{29}
\{ x=\eta(1 + & \ti \phi_{1,\alf}(\eta)), \ p=\eta(\eps_1+\ti \phi_{3,\alf}(\eta)), \ u= 1+\alf |\eta|^{1/\eps_1} \},
\ \ \, \alf \in \bR,
\\
\label{30}
&\{ x=\zeta(1+\ti \phi_{2}(\zeta)), \ p=\zeta(\eps_2+\ti \phi_{4}(\zeta)), \ u= 1\},
\end{align}
where
$\ti \phi_{i,\alf}(\eta)=\phi_{i}(\alf |\eta|^{1/\eps_1},\eta,0)$ for $i=1,3$, and
$\ti \phi_{i}(\zeta)=\phi_{i}(0,0,\zeta)$ for $i=2,4$.
Observe that the integral curve \eqref{29} with $\alf=0$ and the integral curve \eqref{30}
belong to the isotropic surface $\{u=1\}$.

Since the right-hand sides of $x=\eta(1+\ti \phi_{1,\alf}(\eta))$ and $x=\zeta(1+\ti \phi_{2}(\zeta))$
are $C^1$ and $\Ci$-smooth respectively, both equations can be solved for $\eta$.
Hence, from \eqref{29} and \eqref{30} one can express the variables $p$ and $u$ as functions of $x$.
To complete the proof, observe that the map $\pi \circ \Psi (x,u,p) = (x,y)$
sends the family of integral curves \eqref{29} to the family of geodesics \eqref{26}
and the integral curve \eqref{30} to the geodesic \eqref{27}; see Figure \ref{pic6} (right).
\end{proof}

\begin{remark}
{\rm
In the case $Z$ the family $\Gamo$ given by \eqref{17} contains a unique isotropic geodesic $y=\frac{1}{4}x^2$
(the bold line in Figure~\ref{pic3}, right) which separates $\Gamo$ into timelike and spacelike geodesics.
In the case $D_s$ the family $\Gamo$  contains two isotropic geodesics, corresponding to
the integral curve \eqref{29} with $\alf=0$ and the integral curve \eqref{30}.
The first (the bold line in Figure~\ref{pic6}, right)
separates $\Gamo$ into timelike and spacelike geodesics,
while the second (the double line in Figure~\ref{pic6}, right) does not.
The difference between the families $\Gamo$ in the cases $Z$ and $D_s$ is related to
topological structure of the invariant foliations of the field \eqref{12}
in the cases $Z$ and $D_s$; compare Figures \ref{pic3} (left) and \ref{pic6} (left).
In the case $Z$ all invariant leaves intersect on the line $\Psi (\Pi)$ only, while
in the case $D_s$ they intersect on the dotted line $\Psi (\Pi)$
and on the double line which corresponds to $\{\xi=\eta=0\}$ in the normal form \eqref{24}.
}
\label{Rem4}
\end{remark}


\begin{figure}[ht]
\begin{center}
\includegraphics[height=4.8cm]{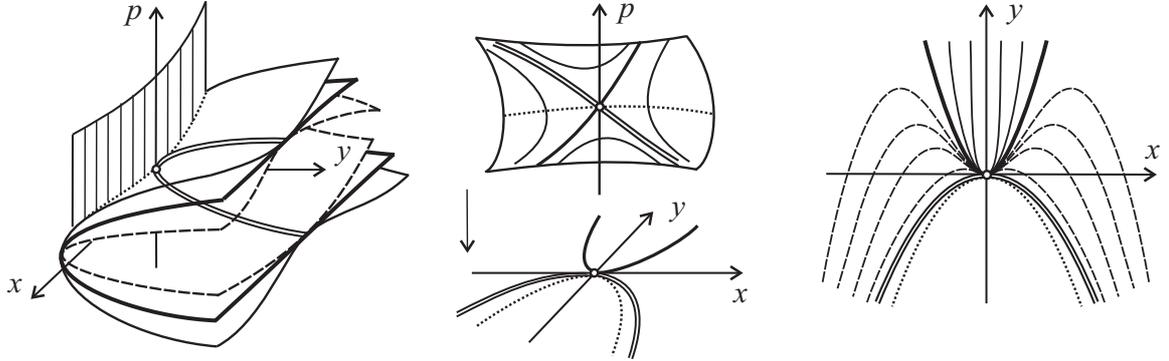}
\end{center}
\vspace{-1ex}
\caption{The case $D_s$: invariant foliation of the field \eqref{12} (left),
integral curves of \eqref{12} on an invariant leaf (center) and geodesics $\gamap \in \Gamo$ (right).
On the right: timelike and spacelike geodesics are solid and dashed lines, respectively.
The bold line and the double line are isotropic geodesics, the dotted line is the discriminant curve.
}
\label{pic6}
\end{figure}

\newpage

\begin{theorem}
\label{T5}
Let $S$ be a surface with a pseudo-Riemannian metric \eqref{1} and let $q \in \D$ be a point satisfying
the conditions $D_n$ or $D_f$ in Table~\ref{Tab1}.
Then to the isotropic direction $p_0$ corresponds a two-parameter family $\Gamo$ of
$C^2$-smooth geodesics outgoing from $q$ into the Lorenzian domain, while
there are no geodesics outgoing from $q$ into the Riemannian domain.

There exist smooth local coordinates centered at the origin such that
the metric has the form \eqref{11}
and the field \eqref{12} has the invariant foliation
\begin{equation}
y = \eps x^2 + p^2(1 + \alf Y_{\alf}(x,p)), \ \ Y_{\alf} \in \MM^2, \ \  \alf \in \bR,
\label{31}
\end{equation}
see Figure \ref{pic7}, left.
The restriction of the field \eqref{12} to every invariant leaf \eqref{31} is a node {\rm (}for $D_n${\rm )}
or a focus {\rm (}for $D_f${\rm )} with spectrum $\eps_1,\eps_2$, and the geodesics $\gamabp \in \Gamo$
are projections of the integral curves of \eqref{12} on the leaves \eqref{31}
to the $(x,y)$-plane {\rm (Figure \ref{pic7}, right)}.
The geodesics $\gamabp \in \Gamo$ are
timelike if $\alf<0$, spacelike if $\alf>0$ and isotropic if $\alf=0$.
\end{theorem}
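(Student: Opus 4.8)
The plan is to follow the same strategy as in the proof of Theorem~\ref{T4}, but using the normal form \eqref{24} in the node/focus regime instead of the saddle regime. Recall that the germ \eqref{16} with $\eps \neq 0,\frac{1}{16}$ is hyperbolic with spectrum $(1,\eps_1,\eps_2)$, where $\eps_1+\eps_2=\frac{1}{2}$, $\eps_1\eps_2=\eps$; in the cases $D_n,D_f$ both $\eps_1,\eps_2$ have positive real part (their sum is $\frac12$ and their product is positive), so the $\xi$-direction with eigenvalue $1$ is the dominant one. Under the non-resonance assumption \eqref{10} (in the case $D_n$; in the case $D_f$ the eigenvalues $\eps_{1,2}$ are non-real and no real resonance \eqref{10} can occur), Theorem~\ref{PT1} gives that \eqref{16} is smoothly equivalent to \eqref{24}, with the conjugating diffeomorphism of the form \eqref{25}.

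First I would describe the invariant foliation. By Lemma~\ref{PL2}, the surfaces $c_1|\eta|^{1/\eps_1}+c_2|\zeta|^{1/\eps_2}+c_3\xi=0$ (suitably interpreted when $\eps_{1,2}$ are complex, i.e. using the real/imaginary parts of $\eta^{1/\eps_1}$) are invariant for \eqref{24}; since now $1/\eps_1,1/\eps_2$ both have real part $<2$ while $\xi$ scales linearly, the only such surfaces through the origin are the planes $\xi=\alf$ — more precisely, the leaves of the foliation through the origin are $\{\xi=\alf\cdot(\text{flat correction})\}$. Pulling back by \eqref{25} and then by $\Psi$ (formula \eqref{13}), using $u=v+1=\xi+1$ and $y=\eps x^2+up^2$, one obtains that $v$ is a flat function of $(x,p)$ along each leaf, hence the foliation of \eqref{12} takes the form \eqref{31} with $Y_\alf\in\MM^2$; the value $\alf=0$ is the isotropic surface $u=1$, and no leaf meets the Riemannian domain $y<\eps x^2$ since on each leaf $y-\eps x^2=up^2=(1+\alf Y_\alf)p^2\ge 0$ near the origin. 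This also explains why there are no geodesics outgoing into the Riemannian side.

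Next I would analyze the restricted field. On each leaf $\xi=\alf$ the field \eqref{24} restricts to $\eps_1\eta\,\pa_\eta+\eps_2\zeta\,\pa_\zeta$, a linear node (if $\eps_{1,2}$ real, case $D_n$) or focus (if $\eps_{1,2}$ complex conjugate, case $D_f$) with spectrum $\eps_1,\eps_2$; all of its orbits pass through the singular point $\eta=\zeta=0$, which corresponds via \eqref{25} to the $v$-axis and via $\Psi$ to the origin in the $(x,y)$-plane. Translating back to the $(x,u,p)$ coordinates, on each leaf \eqref{31} the lifted geodesics are the integral curves of \eqref{14} through $(0,1,0)$; since on the isotropic leaf $\alf=0$ these curves lie in $\{u=1\}$ and for $\alf\ne0$ the variable $u$ differs from $1$ by a controlled amount, the map $\pi\circ\Psi(x,u,p)=(x,y)$ sends this two-parameter family (parametrized by $\alf$ and by the orbit within the node/focus) to the two-parameter family $\Gamo$ of geodesics $\gamabp$ in the $(x,y)$-plane. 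The timelike/spacelike/isotropic trichotomy follows from the sign of $ds^2=\om(r\,dx^2-dy^2)+\Theta$ evaluated along $\gamabp$: with $\om(0,0)=-1$ and $r=up^2=(1+\alf Y_\alf)p^2$, the sign of $ds^2$ to leading order is governed by the sign of $\alf$, giving timelike for $\alf<0$, spacelike for $\alf>0$, isotropic for $\alf=0$.

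Finally I would address regularity. The integral curves of the node/focus on a leaf are, in the $(\eta,\zeta)$ variables, of the form $\zeta=\eta^{\eps_2/\eps_1}\cdot(\text{const})$ (node) or spirals (focus); composing with the $C^\infty$ changes \eqref{25} and the quadratic $\Psi$, and using $\eps_1>\frac{1}{2}>\eps_2$ type estimates on the exponent $\eps_2/\eps_1$ (whose real part exceeds $\frac12\cdot\frac{1/2-\eps_1}{\eps_1}$; here $\Real(\eps_2/\eps_1)>0$ and one checks it forces $C^2$ but not necessarily $C^3$ smoothness of the resulting curves as graphs $y=y(x)$), one gets that each $\gamabp$ is $C^2$-smooth, with the stated leading term $y=\frac{\eps_1}{2}x^2+o(x^2)$ replaced here by a genuinely two-parameter spread of curves all tangent to $y=0$ at the origin. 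The main obstacle I expect is precisely this regularity bookkeeping in the focus case $D_f$, where $\eps_1,\eps_2$ are complex and the orbits spiral: one must verify that the projection to the $(x,y)$-plane still yields well-defined $C^2$ graphs (or at least $C^2$ curves through the origin) and that the parametrization by $\alf$ together with the spiral parameter genuinely produces a two-parameter family filling a full neighborhood-sector of the Lorenzian domain; this is where the non-resonance condition \eqref{10} and the explicit form \eqref{25} of the conjugacy do the real work.
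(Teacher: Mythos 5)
Your overall strategy --- linearize the germ \eqref{16}, produce an invariant foliation via Lemma~\ref{PL2}, push it forward through \eqref{25} and $\Psi$, and read off the node/focus on each leaf --- is the same as the paper's. But there is a genuine error in your treatment of the focus case $D_f$. You claim that because $\eps_{1},\eps_{2}$ are non-real, ``no real resonance \eqref{10} can occur'' and hence Theorem~\ref{PT1} gives a $\Ci$-smooth conjugacy to \eqref{24}. This is false: since $\eps_1+\eps_2=\frac{1}{2}$ identically, the relation $2\eps_1+2\eps_2=1$ is a resonance of type \eqref{10} (and of type \eqref{39} for the spectrum $(1,\eps_1,\eps_2)$) of order $|s|=4$, with both sides real, and it holds for every $\eps>\frac{1}{16}$. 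It obstructs $C^4$-smooth linearization. The paper instead invokes Theorem~\ref{PT3} with $k=1$ (checking there are no resonances \eqref{39} of order $2$ and no resonances \eqref{40} of order $2$) to obtain only a $C^1$-smooth conjugacy to the real normal form \eqref{33}, with conjugating maps of the weaker form \eqref{34} rather than \eqref{25}. Your closing remark that the non-resonance condition \eqref{10} ``does the real work'' in the focus case is therefore off-track: \eqref{10} is not (and cannot be) assumed for $D_f$, and the regularity bookkeeping must be done with a $C^1$ conjugacy only.

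A second, related inaccuracy is your description of the invariant leaves as ``planes $\xi=\alf$ with a flat correction,'' with $v$ a flat function of $(x,p)$. The leaves through the origin are $\xi=\alf\bigl(|\eta|^{1/\eps_1}+|\zeta|^{1/\eps_2}\bigr)$ in the case $D_n$ (both exponents exceed $2$ since $0<\eps_1,\eps_2<\frac{1}{2}$) and $\xi=\alf(\eta^2+\zeta^2)^2$ in the case $D_f$; these are $2$-flat but certainly not infinitely flat (the focus leaf is a quartic), which is exactly why the statement asserts $Y_{\alf}\in\MM^2$ and why the geodesics are only claimed to be $C^2$. Your assertion that $1/\eps_1,1/\eps_2$ have real part $<2$ is also wrong in the node case and only sometimes true in the focus case. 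The correct chain of reasoning is: the right-hand sides of \eqref{37} and \eqref{38} are at least $C^1$, the implicit function theorem solves for $v=\alf Y_{\alf}(x,p)$, and substituting into $y=\eps x^2+(1+v)p^2$ yields \eqref{31}; the restriction of \eqref{12} to each leaf, after dividing by the common factor $p$ as in the case $Z$, is the node or focus with spectrum $(\eps_1,\eps_2)$. With the $D_f$ linearization repaired as above, the rest of your argument (the trichotomy by the sign of $\alf$, the absence of geodesics on the Riemannian side because $y-\eps x^2=(1+\alf Y_{\alf})p^2\ge 0$ on every leaf, and the two-parameter count) goes through as in the paper.
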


\begin{figure}[ht]
\begin{center}
\includegraphics[height=5.2cm]{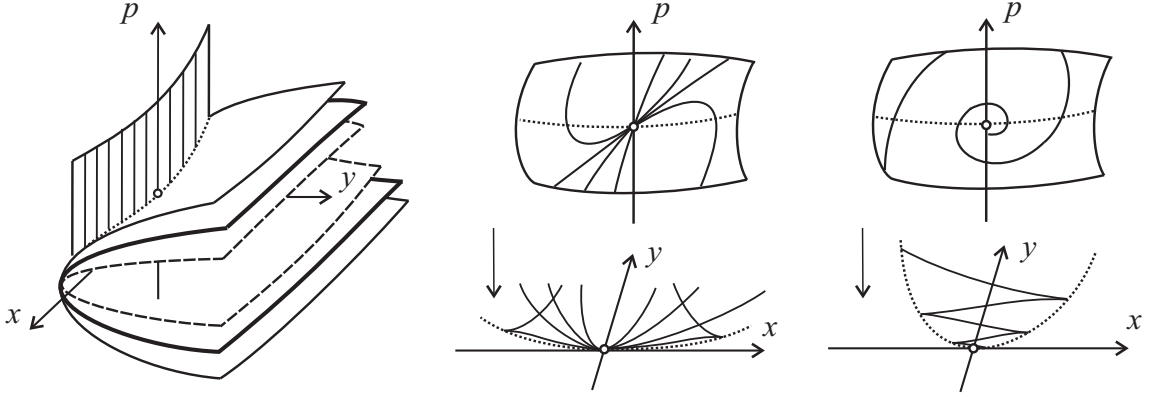}
\end{center}
\vspace{-1ex}
\caption{The cases $D_n$ and $D_f$:
invariant foliation of the field \eqref{12} (left), 
integral curves of the field \eqref{12} on an invariant leaf and their projection
to the $(x,y)$-plane in the cases $D_n$ (center) and $D_f$ (right).
}
\label{pic7}
\end{figure}

\begin{proof}
We start with the question of linearizability of the germ \eqref{16} at the origin.
In the case $D_n$, the germ \eqref{16} is smoothly equivalent to the linear normal form \eqref{24}
with $0 < \eps_{1}, \eps_{2} < \frac{1}{2}$ (Theorem~\ref{PT1} in Appendix~A),
and the conjugating diffeomorphism has the form \eqref{25}.

In the case $D_f$, Theorem~\ref{PT1} does not apply because the spectrum $(1, \eps_1,\eps_2)$ with
\begin{equation}
\eps_{1}=a +bi,\ \ \eps_2 = a - i b,  \ \  a = \tfrac{1}{4}, \ \  b= \tfrac{1}{4}\sqrt{16\eps - 1}, \ \
\eps = \eps_{1}\eps_{2} > \tfrac{1}{16},
\label{32}
\end{equation}
has the resonance $2(\eps_1+\eps_2) = 1$ of the order $|s|=4$ impeding the linearizability
in the $C^4$-smooth category.
Consider along with \eqref{10} the resonances obtained by taking the real part of both sides of \eqref{10}.
The spectrum \eqref{32} has five resonances of this type of order greater than one:
$\Real (s_1 \eps_1 + s_2 \eps_2) = 1$, $|s|=s_1+s_2=4$.
Thus in the case $D_f$, the germ \eqref{16} satisfies the condition of Theorem~\ref{PT3} (Appendix~A) with $k=1$,
and consequently, it is $C^1$-smoothly equivalent to
\begin{equation}
\xi \pa_{\xi} +
(a \eta + b \zeta) \pa_{\eta} + (a \zeta - b\eta) \pa_{\zeta}
\label{33}
\end{equation}
with $a,b$ as in \eqref{32}.

Thus in the case $D_n$ (resp. $D_f$) the germ \eqref{16} is $\Ci$ (resp. $C^1$) smoothly equivalent to
the linear normal form \eqref{24} (resp. \eqref{33}), and the conjugating diffeomorphism has the form
\begin{equation}
\begin{aligned}
&
x = \eta \phi_1 + \zeta \phi_2, \ \
p = \eta \phi_3 + \zeta \phi_4, \ \
v = \xi,
\\
&
\eta = x \psi_1 + p \psi_2, \ \
\zeta = x \psi_3 + p \psi_4, \ \
\xi  = v,
\end{aligned}
\label{34}
\end{equation}
for some $\Ci$ (resp. $C^1$) smooth functions $\phi_i=\phi_i (\xi,\eta,\zeta)$ and $\psi_i=\psi_i(x,v,p)$.

By Lemma \ref{PL2} (Appendix~A), equations
\begin{align}
\label{35}
\xi &= \alf (|\eta|^{1/\eps_1} + |\zeta|^{1/\eps_2}), \\
\label{36}
\xi &= \alf (\eta^2+\zeta^2)^2,
\end{align}
where $\alf \in \bR P^1$, define invariant foliations of the linear fields \eqref{24} and \eqref{33}, respectively.
Here $\alf=\infty$ gives the $\xi$-axis in the normal coordinates, i.e., the $v$-axis in the $(x,v,p)$-space,
and consequently, the origin in the $(x,y,p)$-space.
Hence we shall consider both equations \eqref{35} and \eqref{36} with $\alf \in \bR$ only.

Substituting expressions \eqref{34} for $\xi, \eta, \zeta$ in \eqref{35} and \eqref{36},
gives the invariant foliation of the field \eqref{16} in the case $D_n$ and $D_f$ respectively:
\begin{align}
\label{37}
v &= \alf \bigl(|x \psi_1 + p \psi_2|^{1/\eps_1} + |x \psi_3 + p \psi_4|^{1/\eps_2}\bigr), \\
\label{38}
v &= \alf \bigl(|x \psi_1 + p \psi_2|^2+ |x \psi_3 + p \psi_4|^2\bigr)^2.
\end{align}

Note that the right-hand sides of \eqref{37} and \eqref{38} are at least $C^1$-smooth
(in the case $D_n$, $0 < \eps_1, \eps_2 < \frac{1}{2}$).
By the implicit function theorem, \eqref{37} and \eqref{38} can be solved in $v$ near the origin.
In both cases we get the equation $v = \alf Y_{\alf}(x,p)$ with a $C^1$-smooth function $Y_{\alf}$ vanishing at the origin.
Substituting this expression into $y = \eps x^2 + (1+v)p^2$, gives the invariant foliation \eqref{31}
of the field \eqref{12}.

Similarly to the case $Z$, one can divide the restriction of the field \eqref{12} to every invariant leaf
by the common factor $p$ (see formula~\eqref{23}). Then the restriction of the field \eqref{12}
to every invariant leaf \eqref{31} becomes a node (in the case $D_n$) and a focus (in the case $D_f$)
with the spectrum $(\eps_1,\eps_2)$ (Figure \ref{pic7}, center, right).
\end{proof}

Figure \ref{pic8} presents computer generated pictures of geodesics on a surface endowed with the
metric \eqref{11} with $\om \equiv -1$ and $\Theta \equiv 0$ in the cases $D_s, D_n, D_f$.

\begin{figure}[ht]
\begin{center}
\includegraphics[height=5.5cm]{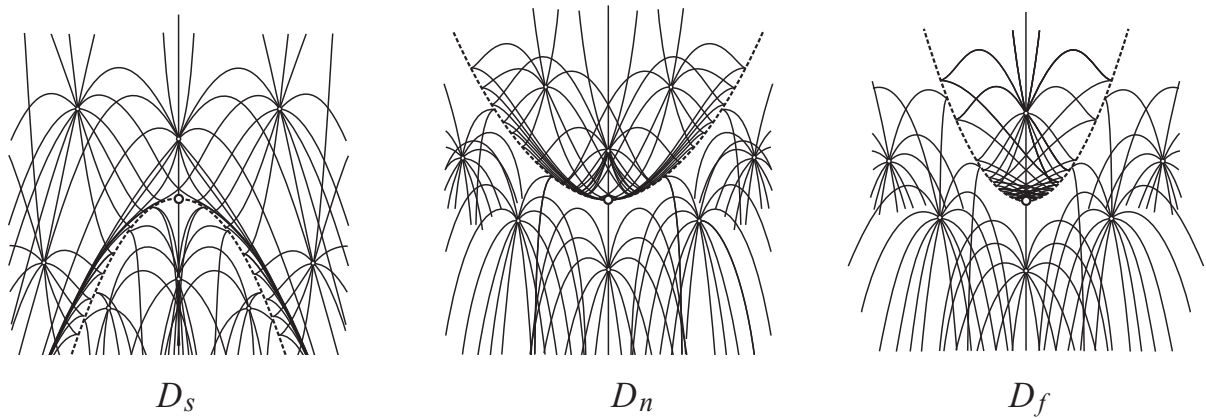}
\end{center}
\vspace{-1ex}
\caption{
Geodesics (solid lines) in the metric \eqref{11} with $\om \equiv -1$ and $\Theta \equiv 0$
in the cases $D_s$ (left), $D_n$ (center), $D_f$ (right).
The dotted line is the discriminant curve.
}
\label{pic8}
\end{figure}


\section{Appendix A. Local normal forms of vector fields}

Here we give a brief survey of local normal forms for vector fields in real phase space,
which were used in this paper (for more details, see also surveys in \cite{AI, JBG}).
All vector fields are supposed to be smooth
(that means $\Ci$ unless stated otherwise). By $\Com$ we denote
the class of analytic mappings.
For convenience, we also present vector fields as autonomous differential equations,
where the differentiation is by an auxiliary parameter playing a role of time.

\begin{defin}
{\rm
Two vector fields $V_1$ and $V_2$ are $C^k$-smoothly (resp. topologically) equivalent, if there exists a $C^k$-diffeomorphism (resp.  homeomorphism) $h \colon \bR^n \to \bR^n$ that conjugates their phase flows
$g^t_1$ and $g^t_2$, i.e., $h \circ g^t_1 = g^t_2 \circ h$.
Here $k$ is an integer number (finite-smooth equivalence) or $\infty$ (infinite-smooth equivalence)
or $\om$ (analytic equivalence).
}
\end{defin}

\begin{defin}
{\rm
Two vector fields $V_1$ and $V_2$ are {\it orbitally} $C^k$-smoothly (resp. topologically) equivalent,
if there exists a $C^k$-diffeomorphism (resp.  homeomorphism) that conjugates
their integral curves (orbits of their phase flows).
}
\end{defin}

\begin{remark}
{\rm
The second definition slightly differs from the generally accepted definition of the orbital equivalence,
where coincidence of the orientation of integral curves is also required.
In fact, our definition is naturally related to directions fields, whose integral curves
do not have a orientation a priori.
}
\label{RemA}
\end{remark}

A great deal of work is done to bring a vector field to a normal form
in a neighborhood of its singular point under a chosen equivalence relation.
In particular, the germ of a vector field at its singular point
is called  {\it linearizable} (in a certain category)
if it is equivalent (in this category) to its linear part.
Of course, not all vector fields are linearizable,
and when it is not, the question is what normal formal can we get.

\begin{defin}
{\rm
Let $V$ be the germ of a vector field with singular point the origin $0$ in $\bR^n$ and
$\la = (\la_1, \ldots, \la_n) \in \bC^n$ be the spectrum of $V$ at $0$.
The germ $V$ is called {\it hyperbolic} if the spectrum $\la$ does not contain neither zeros nor pure imaginary eigenvalues. The germ $V$ is called {\it partially hyperbolic} if
the spectrum $\la$ contains at least one eigenvalue with non-zero real part.
}
\end{defin}

\subsection{Hyperbolic germs}

Let $V$ be the germ of a  hyperbolic vector field at $0$.
For topological equivalence the only local invariants are the scalars $a_i = \sign (\Real \la_i)$.
The Grobman-Hartman Theorem \cite{AI, Hartman} states that $V$ is topologically equivalent to the field
$\dot \xi_i = a_i \xi_i$, $i=1, \ldots, n$.

For the $C^k$-smooth equivalence, resonances of two types play an important role:
\begin{align}
\label{39}
\la_j - (s,\la) = 0,  \ \ s_i \in \bZ_+, \ \  j\in \{1,\ldots, n\}, \\
\label{40}
\Real (\la_j - (s,\la)) = 0, \ \ s_i \in \bZ_+, \ \  j\in \{1,\ldots, n\},
\end{align}
where $(s,\la) = s_1\la_1 + \cdots + s_n\la_n$ is the standard scalar product.
In both relations \eqref{39}, \eqref{40} the natural number
$|s| = s_1 + \cdots +  s_n$ is called the {\it order} of the resonance, and
$\xi^s = \xi_1^{s_1} \cdots \xi_n^{s_n}$ is called the {\it resonant monomial}.
In general, non-trivial resonances \eqref{39}, \eqref{40} of orders $|s|\ge 2$
are obstacles for the germ $V$ to be $C^k$-smoothly linearizable.

\begin{theorem}[\cite{AI, Chen, Sternberg}]
If the spectrum $\la$ does not have resonances \eqref{39} of any order $|s| \ge 2$,
the germ $V$ is $\Ci$-smoothly linearizable.
Moreover, if $\la \in \bR^n$ and has only trivial resonances \eqref{39},
the germ $V$ is $\Ci$-smoothly equivalent to
\begin{equation}
\dot \xi_i = \la_i \xi_i, \ \ \ i=1, \ldots, n.
\label{41}
\end{equation}
\label{PT1}
\end{theorem}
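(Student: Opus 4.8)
The plan is to follow the classical Poincaré--Dulac normalization followed by a hyperbolic smoothing argument, in the spirit of Sternberg and Chen. Write $V = \Lambda x + \sum_{d\ge 2} V_d$, where $\Lambda$ is the linear part and $V_d$ is the homogeneous component of degree $d$, and look for a formal coordinate change $\xi = x + \hat u(x)$ with $\hat u$ starting at order $2$ that conjugates $V$ to $\Lambda x$. One proceeds by induction on $d$: having linearized through order $d-1$, the obstruction to killing the degree-$d$ terms is the image of the homological operator $\mathrm{ad}_{\Lambda}$ on $\bR^n$-valued homogeneous vector fields of degree $d$; its eigenvalues are the numbers $(s,\lambda) - \lambda_j$ with $|s| = d$ and $1 \le j \le n$. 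Since by hypothesis \eqref{39} has no resonances of order $|s| \ge 2$, all of these are nonzero, so $\mathrm{ad}_{\Lambda}$ is invertible in every degree and the degree-$d$ terms are removed by a homogeneous polynomial substitution. Iterating produces a formal diffeomorphism $\hat h$ with $D\hat h(0) = I$ conjugating $V$ to $\Lambda x$.

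Next, by Borel's theorem there is a genuine $C^{\infty}$ diffeomorphism germ $h_0$ whose Taylor series at $0$ equals $\hat h$; after applying $h_0$ we may assume $V = \Lambda x + R(x)$ with $R \in C^{\infty}$ \emph{infinitely flat} at $0$. The theorem is thereby reduced to showing that a flat $C^{\infty}$ perturbation of a hyperbolic linear field is $C^{\infty}$-conjugate to that linear field. This is the heart of the argument. I would handle it by Sternberg's homotopy method: embed $V$ in the family $V_t = \Lambda x + tR(x)$, $t \in [0,1]$, seek diffeomorphisms $h_t$ with $h_0 = \mathrm{id}$ and $(h_t)_\ast(\Lambda x) = V_t$, and differentiate in $t$ to obtain a cohomological equation $[X_t, V_t] = R$ for the generating vector field $X_t = (\partial_t h_t)\circ h_t^{-1}$. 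Because $\Lambda$ is hyperbolic, this equation is solved by an explicit integral operator built from the linear flow $e^{s\Lambda}$ (integrating forward along the contracting directions and backward along the expanding ones), and the flatness of $R$ makes the resulting integrals converge in every $C^k$-norm near $0$; hence the $h_t$ are $C^{\infty}$ and $h_1$ is the desired conjugacy. (Alternatively one can pass to the time-one map and invoke the Sternberg linearization theorem for hyperbolic diffeomorphisms via the stable/unstable graph transform, after a preliminary rescaling of time to sidestep the $2\pi i$ ambiguity between additive resonances of $\lambda$ and multiplicative resonances of $e^{\lambda}$.)

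Composing $h_0$ with $h_1$ gives the $C^{\infty}$ linearization, which proves the first assertion. For the ``moreover'' statement, suppose $\lambda \in \bR^n$ and \eqref{39} has only the trivial solutions (those with $s$ a standard coordinate vector). In particular $\lambda_i \ne \lambda_j$ for $i \ne j$, so $\Lambda$ has $n$ distinct real eigenvalues and is linearly conjugate over $\bR$ to the diagonal field $\sum_i \lambda_i \xi_i \partial_{\xi_i}$; composing this linear change with the $C^{\infty}$ linearization above yields the normal form \eqref{41}.

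I expect the main obstacle to be the third step, absorbing the flat remainder $R$ while keeping control of all $C^k$-norms. The formal normalization and the Borel realization are soft; the real work is the convergence of the integral operator solving the cohomological equation, which depends delicately on balancing the superpolynomial decay supplied by flatness of $R$ against the (generally unbalanced) exponential rates of $e^{s\Lambda}$ on the stable and unstable subspaces. This is precisely where hyperbolicity is indispensable, and it is the reason the analytic version of the statement fails without additional Diophantine conditions.
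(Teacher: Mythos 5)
The paper offers no proof of this statement: it is quoted in the survey appendix as a classical theorem with references to Sternberg, Chen and Arnol'd--Il'yashenko, and your sketch is a faithful reconstruction of exactly the argument those sources contain (formal Poincar\'e--Dulac normalization via invertibility of the homological operator, Borel realization to reduce to an infinitely flat remainder, then absorption of the flat remainder by the homotopy/cohomological-equation method). Two small points are worth making explicit. First, hyperbolicity is not a hypothesis of the theorem, so before invoking it in the third step you should observe that it follows from the non-resonance assumption: for a real field a zero eigenvalue $\la_j$ produces the order-$2$ resonance $\la_j-2\la_j=0$, and a purely imaginary pair $\la_j=\pm i\beta$ produces the order-$3$ resonance $\la_j-(2\la_j+\ov{\la_j})=0$, so the absence of resonances of order $|s|\ge 2$ already forces $\Real\la_i\neq 0$ for all $i$. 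Second, the convergence of the integral operator solving $[X_t,V_t]=R$ requires first extending the flat remainder to a globally defined, compactly supported perturbation so that the relevant orbits are complete; you correctly identify this as the delicate step, and with that cutoff in place the superexponential decay of the flat term along the stable (resp.\ unstable) directions dominates the exponential growth of the derivative of the flow, exactly as you describe. The ``moreover'' part is also right: only trivial resonances rules out the order-one resonances $\la_j=\la_i$, so the real spectrum is simple and the linear part is diagonalizable over $\bR$, giving \eqref{41}.
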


\vspace{-4ex}

\begin{theorem}[\cite{AI, Chen, Hartman}]
If $\la \in \bR^n$, then the germ $V$ is $\Ci$-smoothly equivalent to the field
\begin{equation}
\dot \xi_i = \la_i \xi_i + \sum_{s \in \bZ_+^n} {a_{is}} \xi^s, \ \ \ i=1, \ldots, n.
\label{42}
\end{equation}
where ${a_{is}} \neq 0$ only if the resonance \eqref{39} holds.
\label{PT2}
\end{theorem}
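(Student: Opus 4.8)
The plan is to prove this the classical way: first normalize $V$ \emph{formally} by the Poincar\'e--Dulac procedure, and then upgrade the formal equivalence to a $\Ci$-smooth one via Chen's theorem on elementary singular points. Since $V$ is hyperbolic with real spectrum $\la = (\la_1,\dots,\la_n)$, after a preliminary linear change we may assume its linear part is $A\xi$ with $A = \operatorname{diag}(\la_1,\dots,\la_n)$; write $V = A\xi + \sum_{m \ge 2} V_m$, where $V_m$ is the homogeneous component of degree $m$, understood for now as a formal power series vector field.

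For the formal normalization I would work degree by degree. On the finite-dimensional space $\mathcal H_m$ of vector fields whose components are homogeneous polynomials of degree $m$, consider the homological operator $L_A h = \operatorname{ad}_{A\xi} h = [A\xi, h]$. In the monomial basis $\xi^s \pa_{\xi_j}$ ($|s| = m$) this operator is diagonal with eigenvalue $(s,\la) - \la_j$, so $\mathcal H_m = \operatorname{im} L_A \oplus \mathcal R_m$, where $\mathcal R_m$ is spanned by the \emph{resonant} monomials, i.e.\ those with $(s,\la) = \la_j$ --- which is precisely the resonance \eqref{39}. The induction step: assuming the parts of $V$ of degrees $2,\dots,m-1$ have already been reduced to their resonant components by a near-identity formal substitution, apply one more substitution $\xi \mapsto \xi + h_m(\xi)$ with $h_m \in \mathcal H_m$ chosen so that $L_A h_m$ cancels the component of $V_m$ in $\operatorname{im} L_A$; because $h_m$ has degree $m \ge 2$ this leaves all lower-degree parts untouched and moves the degree-$m$ part into $\mathcal R_m$. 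The formal composition of these substitutions is a formal diffeomorphism $\hat H$, tangent to the identity, such that $\hat H_* V$ has Taylor series $\sum_i \bigl(\la_i \xi_i + \sum_s a_{is} \xi^s\bigr) \pa_{\xi_i}$ with $a_{is} \ne 0$ only when \eqref{39} holds. (Hyperbolicity plays no role here; note that if $\la$ has mixed signs there can be infinitely many resonances, which is why the sum in \eqref{42} is allowed to be an infinite series.)

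Next I would make the formal objects genuine. By Borel's lemma choose a $\Ci$ diffeomorphism $H$, tangent to the identity, whose $\infty$-jet at the origin is $\hat H$, and a $\Ci$ vector field $V_0$ of the form \eqref{42} whose resonant Taylor coefficients at $0$ are the $a_{is}$ found above (one simply takes smooth functions realizing the prescribed, possibly divergent, jets). Then $H_* V$ and $V_0$ are two $\Ci$ germs at the hyperbolic singular point $0$ with identical $\infty$-jets, hence formally equivalent. By Chen's theorem (\cite{AI, Chen}; see also \cite{Hartman}) --- two $\Ci$ vector fields at a common elementary (hyperbolic) singular point that are formally equivalent are $\Ci$-smoothly equivalent --- there is a $\Ci$ diffeomorphism conjugating the flows of $H_* V$ and $V_0$; composing it with $H$ yields the sought $\Ci$ equivalence of $V$ with the normal form \eqref{42}. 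When \eqref{39} has only trivial resonances, $V_0$ degenerates to the linear field \eqref{41}, recovering Theorem~\ref{PT1}.

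I expect the genuinely hard step to be the last one. The formal Poincar\'e--Dulac normalization and the Borel realization are essentially bookkeeping, but Chen's theorem rests on delicate Grobman--Hartman-type estimates: one must control a conjugating map in a full neighbourhood of the hyperbolic rest point, exploiting the uniform contraction and expansion along the stable and unstable directions, and then run an additional smoothing argument to obtain $\Ci$ rather than merely continuous regularity. That analytic core is where all the difficulty lies, and in a paper of this kind I would simply quote it.
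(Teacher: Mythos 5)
The paper offers no proof of Theorem~\ref{PT2} --- it is quoted directly from \cite{AI, Chen, Hartman} --- and your proposal reconstructs exactly the argument of those sources: formal Poincar\'e--Dulac normalization killing the non-resonant monomials degree by degree via the homological operator, Borel realization of the resulting formal diffeomorphism and of the (possibly divergent) formal normal form, and Chen's theorem that two $\Ci$ germs with a common hyperbolic singular point and the same $\infty$-jet are $\Ci$-conjugate. The argument is correct as written; the only point deserving a remark is your reduction to a diagonal linear part, which is harmless because when $\la_i=\la_j$ the off-diagonal Jordan entries are themselves resonant monomials with $|s|=1$ and are therefore already accommodated by the sum in \eqref{42}.
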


In other words, Theorems \ref{PT1} and \ref{PT2} states that the normal forms \eqref{41}, \eqref{42}
contain resonant monomials only.
Here the linear terms $\la_i \xi_i$ correspond to trivial resonances \eqref{39}, while all remaining terms
${a_{is}} \xi^s$ correspond to non-trivial resonances \eqref{39} if they exist.
Under some additional restrictions, Theorems \ref{PT1}, \ref{PT2} are valid in $\Com$ category.
For instance, in the case $\la \in \bR^n$ it is sufficient to require that all $\la_i$ have the same sign.
However, if $\la_i$ have different signs, the restrictions are much stronger, especially
if $\la$ has non-trivial resonances (Theorem~\ref{PT2}), see \cite{AI, JBG} and the references therein.

On the contrary, requirements become weaker if we consider $C^k$-smooth equivalence with $k<\infty$,
see e.g. \cite{AI, Sam82, Sam96}. In this paper, we need the following result.

\begin{theorem}[\cite{AI, Sell}]
Suppose that for some positive integer $k$
the spectrum $\la$ has neither resonances \eqref{39} of order $2 \le |s|\le 2k$
nor resonances \eqref{40} of order $|s| = 2k$.
Then $V$ is $C^k$-smoothly linearizable.
\label{PT3}
\end{theorem}

We establish below the existence of invariant foliations of a hyperbolic linear vector field
in 3-dimensional real phase. Assume, without loss of generality,
that one of the eigenvalues $\la_i$ of is equal to 1.
It is sufficient to consider the following two fields
\begin{align}
\label{43}
\dot \xi_1 &= \la_1 \xi_1, \ \  \dot \xi_2 = \la_2 \xi_2, \ \  \dot \xi_3 = \xi_3,  \\
\label{44}
\dot \xi_1 &= (\alf \xi_1 + \beta \xi_2), \ \
\dot \xi_2 = (\alf \xi_2 - \beta \xi_1), \ \
\dot \xi_3 = \xi_3,
\end{align}
where $\la_{1},\la_{2}$ and $\alf, \beta$ are real and non-zero.

\begin{lemma}
For any real constants $c_i$ the equations
\begin{align*}
c_1 |\xi_1|^{1/\la_1} + c_2 |\xi_2|^{1/\la_2} + c_3 \xi_3 &= 0,\\
c_1 (\xi_1^2+\xi_2^2)^{1/2\alf} + c_3 \xi_3& = 0,
\end{align*}
define invariant surfaces of the fields \eqref{43} and \eqref{44}, respectively,
\label{PL2}
\end{lemma}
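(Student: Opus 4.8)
The plan is to verify directly that each of the two candidate equations is constant along the flow of the corresponding linear field, since ``invariant surface'' for a vector field just means that the defining function is a first integral on a neighborhood of a regular point (or, more precisely, that its Lie derivative along the field is a multiple of the function itself). First I would treat the field \eqref{43}: denote $\Phi(\xi_1,\xi_2,\xi_3) = c_1 |\xi_1|^{1/\la_1} + c_2 |\xi_2|^{1/\la_2} + c_3 \xi_3$ and compute its derivative along \eqref{43}. Using $\frac{d}{dt}|\xi_i|^{1/\la_i} = \frac{1}{\la_i}|\xi_i|^{1/\la_i - 1}\sign(\xi_i)\, \dot\xi_i$ and $\dot\xi_i = \la_i\xi_i$ for $i=1,2$, the $i$-th term contributes $\frac{1}{\la_i}|\xi_i|^{1/\la_i-1}\sign(\xi_i)\la_i\xi_i = |\xi_i|^{1/\la_i}$ (since $\sign(\xi_i)\xi_i = |\xi_i|$), and the third term contributes $c_3\xi_3$. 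Hence the Lie derivative equals $\Phi$ itself, so $\{\Phi = 0\}$ is invariant. The computation for \eqref{44} is analogous: with $\rho = (\xi_1^2+\xi_2^2)^{1/2}$ one has $\dot\rho = \alf\rho$ along \eqref{44} (the $\beta$-terms cancel by antisymmetry), so $\frac{d}{dt}\rho^{1/\alf} = \frac{1}{\alf}\rho^{1/\alf-1}\alf\rho = \rho^{1/\alf}$, and adding $c_3\xi_3$ from the third coordinate again gives that the full expression is its own Lie derivative.

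The key steps, in order, are: (1) reduce to checking that the defining function $\Phi$ satisfies $\dot\Phi = \Phi$ along the flow, which certifies invariance of every level set $\{\Phi = \const\}$ and in particular $\{\Phi = 0\}$; (2) carry out the differentiation term by term for \eqref{43}, handling the absolute values via $\tfrac{d}{dt}|\xi_i| = \sign(\xi_i)\dot\xi_i$ away from $\xi_i = 0$; (3) repeat for \eqref{44}, first establishing $\dot\rho = \alf\rho$ for the planar radius and then differentiating $\rho^{1/\alf}$; (4) remark that the surfaces are smooth away from the coordinate axes $\{\xi_i = 0\}$ and that the identity $\dot\Phi = \Phi$ extends by continuity to those axes in the regions where the relevant exponents make the expression differentiable, or simply note that invariance is a closed condition.

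Honestly, there is no serious obstacle here; the statement is a routine verification and the only mild subtlety is the lack of smoothness of $|\xi_i|^{1/\la_i}$ at $\xi_i = 0$. The cleanest way to handle this is to observe that on each orthant $\{\sign(\xi_1), \sign(\xi_2) \text{ fixed}\}$ the function $\Phi$ is smooth and the identity $\dot\Phi = \Phi$ holds identically, so the zero set is flow-invariant within that orthant; since the flows of \eqref{43} and \eqref{44} preserve each such orthant (the coordinate hyperplanes $\{\xi_i = 0\}$ are themselves invariant for \eqref{43}, and $\{\xi_1=\xi_2=0\}$ is the axis for \eqref{44}), no trajectory starting on $\{\Phi=0\}$ can leave it. Thus the surfaces are invariant, which is what the lemma asserts.
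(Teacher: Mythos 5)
Your verification is correct and is precisely the routine computation the authors have in mind: the paper states ``The proof is trivial and is omitted,'' and the intended argument is exactly your check that the defining function $\Phi$ satisfies $\dot\Phi=\Phi$ along the flow (using $\dot\rho=\alf\rho$ for the planar radius in the second case), so every level set, in particular $\{\Phi=0\}$, is invariant. Your remark on restricting to orthants to handle the non-smoothness of $|\xi_i|^{1/\la_i}$ at the invariant coordinate hyperplanes is a sensible way to tidy up the one mild subtlety.
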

The proof is trivial and is omitted.


\subsection{Partially hyperbolic germs}

Let $W^s$, $W^u$, $W^c$  be the unstable, stable and center
manifold of the partial hyperbolic germ $V$ at $0$, and let $d_i = \dim W^i$, $i=s,u,c$.
Set $d=d_s+d_u$, then $d, d_c >0$ and $d+d_c=n$.
One can choose local coordinates
$(\xi_1, \ldots, \xi_{d_s}) \in W^s$, $(\xi_{d_s+1}, \ldots, \xi_{d}) \in W^u$,
$\zeta = (\zeta_1, \ldots, \zeta_{d_c}) \in W^c$.

\begin{theorem}[\cite{AI, HPS}]
The germ $V$ is topologically equivalent to the direct product of $d$-dimensional standard saddle
(the first $d$ equations)
and the restriction of $V$ to the center manifold (the last $d_c$ equations):
\begin{equation*}
\begin{aligned}
\dot \xi_i = \xi_i, \ \, i = 1, &\ldots,d_s; \ \ \,
\dot \xi_i = -\xi_i, \ \, i = d_s+1,\ldots,d;  \\
&\dot \zeta_j = Z_j(\zeta), \ \, j = 1,\ldots,d_c.
\end{aligned}
\end{equation*}
\label{PT4}
\end{theorem}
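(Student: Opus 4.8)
The plan is to derive Theorem~\ref{PT4} from three ingredients: the center/stable/unstable manifold theorems, the existence of the associated invariant foliations, and a parametrized version of the Grobman--Hartman theorem. First I would invoke invariant manifold theory (see \cite{AI, HPS, Hartman}) to produce, in a neighborhood $U$ of $0$, locally invariant manifolds $W^{cs}$ and $W^{cu}$ with $W^{cs}\cap W^{cu}=W^c$, together with an invariant foliation $\FF^s$ of $W^{cs}$ by $d_s$-dimensional leaves along which the flow contracts, and an invariant foliation $\FF^u$ of $W^{cu}$ by $(d-d_s)$-dimensional leaves along which the flow expands; the leaf of $\FF^s$ (resp. $\FF^u$) through $0$ is $W^s$ (resp. $W^u$), and $W^c$ meets every leaf of each foliation exactly once. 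The leaves are as smooth as $V$, but they depend only continuously (H\"older) on the base point, so $\FF^s$ and $\FF^u$ are merely topological foliations; this is precisely why the conjugating map below cannot in general be taken smooth.

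Next I would use these foliations to put $V$ into a fibered shape. Extending $\FF^s$ and $\FF^u$ off the center manifolds, one obtains in $U$ a continuous, flow-equivariant projection $\pi\colon U\to W^c$ (project along the stable leaf into $W^{cu}$, then along the unstable leaf onto $W^c$), whose fibers $\pi^{-1}(\zeta)$ are plaque-invariant and carry a flow whose linear part has spectrum $\{\la_i : \Real\la_i\neq 0\}$, depending continuously on $\zeta$. Choosing continuous coordinates in which $\pi$ becomes the projection onto the $\zeta$-factor and $W^c=\{\xi=0\}$, one writes $V$ as a skew product $\dot\xi = A(\zeta)\xi + f(\xi,\zeta)$, $\dot\zeta = Z(\zeta)+g(\xi,\zeta)$, where $f,g$ and the $\xi$-derivatives of $f$ vanish on $\{\xi=0\}$, and $A(\zeta)$ is uniformly hyperbolic for $\zeta$ near $0$.

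Then I would apply Grobman--Hartman with parameters to the $\xi$-equation. Because the contraction and expansion rates of $A(\zeta)$ are uniform near $0$, the standard fixed-point construction of the topological conjugacy (in the Banach space of bounded continuous maps commuting with the two flows, using the Green's-function splitting into stable and unstable components) goes through with $\zeta$ as a parameter; a fixed-point-with-parameters argument then shows the resulting family $\xi\mapsto h(\xi,\zeta)$ is jointly continuous and, for each fixed $\zeta$, a homeomorphism of a neighborhood of $\xi=0$. Since the $\dot\zeta$-equation is untouched on $\{\xi=0\}$, and off it the conjugacy can be arranged to preserve the fibers of $\pi$, composing everything yields a homeomorphism of $U$ conjugating the flow of $V$ to that of $\dot\xi_i=\pm\xi_i$, $\dot\zeta=Z(\zeta)$, which is the asserted product. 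I expect the main obstacle to be exactly this last step: establishing the joint continuity in $(\xi,\zeta)$ of the fiberwise Hartman conjugacy, which forces one to set up the contraction on a space of continuous sections and to lean on the uniform hyperbolicity estimates rather than arguing fiber by fiber. A minor additional point to record is that $W^c$ and the restriction $\dot\zeta=Z(\zeta)$ are not unique (and only finitely smooth), but any two choices are topologically equivalent, so the statement is well posed.
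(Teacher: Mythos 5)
The paper offers no proof of Theorem~\ref{PT4}: it is quoted as a known result (the Shoshitaishvili/Hirsch--Pugh--Shub reduction theorem) with the references \cite{AI, HPS}, so there is no in-paper argument to compare against. Your sketch reproduces the standard proof from those sources --- center-stable/center-unstable manifolds, the strong stable and unstable invariant foliations, a flow-equivariant projection onto $W^c$, and a fiberwise Grobman--Hartman conjugacy controlled by uniform hyperbolicity --- and correctly identifies both the merely continuous transverse dependence of the foliations (the reason the equivalence is only topological) and the non-uniqueness of $W^c$ as the points needing care; the only cosmetic wrinkle is that once the projection $\pi$ is straightened the $\dot\zeta$-equation already reads $\dot\zeta=Z(\zeta)$ with no residual $\xi$-dependence.
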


\medskip

In this paper, we deal with a special class of partially hyperbolic vector fields,
which were studied by many authors, see e.g. \cite{Rouss,Takens}.
From now on, we assume that all components of the germ $V$ belong to the ideal $I$
(in the ring of smooth functions vanishing at $0$) generated by two of them.

More specifically, such a germ $V$ has the form
\begin{equation}
\dot \xi = v, \ \
\dot \eta = w, \ \
\dot \zeta_j = \alf_j v + \beta_j w, \ \  j =1, \ldots, n-2,
\label{48}
\end{equation}
where $v,w$ and $\alf_j, \beta_j$ are smooth functions of the variables $\xi, \eta, \zeta_j$.
The components of the germ \eqref{48} belong to the ideal $I = \<v,w\>$,
and the spectrum of $V$ contains at most two non-zero eigenvalues: $\la = (\la_1, \la_2, 0, \ldots, 0)$.

We shall further assume that $\Real \la_{1}\neq 0$ and $\Real \la_{2} \neq 0$.
Hence the center manifold $W^c = \{ v=w=0 \} \subset \bR^n$ is a smooth manifold of codimension 2 and
the restriction of the field $V$ to $W^c$ is identically zero, so $W^c$ consists of singular points of $V$.
By Theorem \ref{PT4}, the germ $V$ is topologically equivalent to
$$
\dot \xi = a_1 \xi, \ \ \ \dot \eta = a_2 \eta, \ \ \, \dot \zeta_j = 0, \ \  j =1, \ldots, n-2,
$$
where $a_i= \sign (\Real \la_i)$, $i=1,2$.

For $C^k$-smooth classification of the germ \eqref{48}, we need to introduce two types of resonances
between the non-zero eigenvalues $\la_{1}, \la_{2}$ being, in fact, partial cases of \eqref{39}:
\begin{eqnarray}
&&
s_1\la_1 + s_2\la_2 = 0, \ \ s_i \in \bZ_+, \ \  i=1,2,
\label{49}\\
&&
s_1\la_1 + s_2\la_2 = \la_j, \ \  s_i  \in \bZ_+, \ \  i,j=1,2.
\label{50}
\end{eqnarray}

For simplifying the presentation, further we shall always assume that $|\la_1| \ge |\la_2|$
and exclude from consideration trivial resonances \eqref{49} ($s_1=s_2=0$)
and \eqref{50} ($s_1=1$, $j=1$ or $s_2=1$, $j=2$).
Then the absence of resonances \eqref{50} implies the absence of \eqref{49}.
On the other hand, in the absence of \eqref{49}, resonances \eqref{50} may have only the form
$\la_1=m\la_2$, for positive integers $m$.

Given the germ \eqref{48} with $\Real \la_{1}\neq 0$ and $\Real \la_{2} \neq 0$
we choose local coordinates $\xi, \eta$ (called {\it hyperbolic variables})
and $\zeta = (\zeta_1, \ldots, \zeta_{n-2}) \in W^c$  (called {\it non-hyperbolic variables})
such that the ideal $I = \<\xi,\eta\>$, and consequently,
the center manifold $W^c$ is given by $\xi=\eta=0$.
The linearization of $V$ with respect to the hyperbolic variables
has two eigenvalues which are continuous functions $\la_{1}(\zeta)$ and  $\la_{2}(\zeta)$ of $\zeta \in W^c$.
We have $\la_{j}(0)=\la_j$, $j=1,2$ and $\la_j$ as above.

An analogue of Theorems \ref{PT1} and \ref{PT2} is the following.

\begin{theorem}[\cite{GR, Rouss}]
Suppose that between $\la_{1}(\zeta)$ and  $\la_{2}(\zeta)$ there are no non-trivial resonances \eqref{49}
of any order $|s| \ge 1$ for all $\zeta$ sufficiently close to zero.
Then the germ \eqref{48} is $\Ci$-smoothly equivalent to
\begin{equation}
\dot \xi = X, \ \ \, \dot \eta = Y, \ \ \,
\dot \zeta_j = 0, \ \  j =1, \ldots, n-2,
\label{51}
\end{equation}
where $X,Y$ are smooth functions of $\xi, \eta, \zeta_j$ such that the ideal
$I = \<X,Y\> = \<\xi,\eta\>$.
Moreover, if in addition the eigenvalues $\la_{1}, \la_{2} \in \bR$, then
\begin{equation}
X = \la_1(\zeta)\xi + \phi(\zeta)\eta^m, \ \ Y = \la_2(\zeta)\eta,
\label{52}
\end{equation}
where $\phi(\zeta) \not\equiv 0$ only if $\la_{1} = m \la_{2}$ with some natural $m \ge 1$.
\label{PT5}
\end{theorem}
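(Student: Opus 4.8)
The plan is to prove Theorem~\ref{PT5} by combining the topological reduction from Theorem~\ref{PT4} (which gives us the product structure of a hyperbolic saddle times a center manifold consisting entirely of singular points) with the $\zeta$-parametrised version of the Sternberg--Chen linearisation used in the $C^\infty$ setting. The main point is that, because all components of $V$ lie in the ideal $I=\langle v,w\rangle$ generated by two of them, the center manifold $W^c=\{v=w=0\}$ is smooth of codimension $2$, $V$ restricts to zero on $W^c$, and $V$ can be regarded as a smooth family, parametrised by $\zeta\in W^c$, of germs of hyperbolic vector fields in the two hyperbolic variables $(\xi,\eta)$ with spectrum $(\la_1(\zeta),\la_2(\zeta))$. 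The absence of non-trivial resonances \eqref{49} of any order, uniformly for $\zeta$ near $0$, is exactly the hypothesis needed to apply the parametrised normal form theory of Roussarie and of Bogdanov (cited in \cite{GR,Rouss}): it kills all the resonant monomials that could obstruct simultaneous straightening of the two hyperbolic directions, and it does so with smooth dependence on the parameter.

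First I would choose coordinates, guaranteed by the structure of the ideal $I$, in which $W^c=\{\xi=\eta=0\}$ and $I=\langle\xi,\eta\rangle$; in these coordinates the linearisation of $V$ in the hyperbolic variables has eigenvalues $\la_1(\zeta),\la_2(\zeta)$ depending continuously (in fact smoothly) on $\zeta$, with $\la_j(0)=\la_j$. Next, working fibrewise over $W^c$, I would apply the Sternberg--Chen type theorem with $C^\infty$ dependence on parameters: the non-resonance condition \eqref{49} for all $|s|\ge1$ implies that, after a smooth (in all variables, including $\zeta$) change of coordinates that fixes $W^c$, the field becomes $\dot\xi=X$, $\dot\eta=Y$, $\dot\zeta_j=0$, where $X,Y$ vanish on $W^c$ together with their needed partials so that $\langle X,Y\rangle=\langle\xi,\eta\rangle$ is preserved. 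The invariance of $W^c$ under the reduced field and its being pointwise singular force the $\zeta_j$-equations to be identically zero. For the second, sharper assertion, when $\la_1,\la_2\in\bR$, only resonances of the form $\la_1=m\la_2$ with integer $m\ge1$ can occur (under $|\la_1|\ge|\la_2|$), and the Poincar\'e--Dulac normalisation in the hyperbolic plane, again carried out with smooth parameter dependence, leaves at most the single resonant monomial $\eta^m$ in the $\xi$-equation with a smooth coefficient $\phi(\zeta)$, giving \eqref{52}; if the resonance $\la_1=m\la_2$ fails for every $m$, then $\phi\equiv0$ and the normal form is linear in $(\xi,\eta)$.

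The step I expect to be the main obstacle is establishing the smooth dependence on the parameter $\zeta$ in the linearisation/normalisation, i.e.\ that the fibrewise conjugating diffeomorphisms can be assembled into a single $C^\infty$ diffeomorphism of the ambient space that is the identity (or at least a well-controlled map) along $W^c$. This is where one genuinely needs the uniform non-resonance hypothesis over a neighbourhood of $0$ in $W^c$ rather than just at the point $0$: the homological equations solved at each step of the normalisation have, by hypothesis, invertible operators with bounds uniform in $\zeta$, so the formal transformation and its Borel-type smooth realisation depend smoothly on $\zeta$. Since this is precisely the content of the cited results of Roussarie \cite{Rouss} and of \cite{GR}, in the write-up I would invoke them directly rather than reproving the parametrised Sternberg theorem; the remaining work is the bookkeeping that the ideal $I=\langle\xi,\eta\rangle$ and the pointwise-singular center manifold are preserved, which is immediate once the conjugacy fixes $W^c$.
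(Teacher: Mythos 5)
The paper does not prove Theorem~\ref{PT5} at all: it is stated in the survey appendix as a known result and attributed to \cite{GR, Rouss}, which is exactly what your proposal does — you outline the standard parametrised Poincar\'e--Dulac/Sternberg argument (non-resonance \eqref{49} forcing $\dot\zeta_j=0$, the surviving resonance \eqref{50} of the form $\la_1=m\la_2$ leaving the single monomial $\phi(\zeta)\eta^m$) and then defer the genuinely hard step, the smooth dependence of the conjugacy on $\zeta$, to those same references. So your approach matches the paper's, and your sketch of why the cited results apply is accurate.
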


\begin{remark}
{\rm
If the pair $(\la_{1}, \la_{2})$ belongs to the Poincar\'e domain
(i.e., $\la_{1}$ and $\la_{2}$ are real and of the same sign or complex conjugate),
then the condition
\begin{equation}
s_1 \la_1(\zeta) + s_2\la_2(\zeta) \neq 0, \ \ \forall \, s_i \in \bZ_+, \ \  i=1,2, \ \ \forall \, \zeta \in W^c,
\label{53}
\end{equation}
follows from \eqref{49}. Moreover, in this case Theorem \ref{PT5} is valid in $\Com$ category.
However, if the pair $(\la_{1}, \la_{2})$ belongs to the Siegel domain
(i.e., $\la_{1}$ and $\la_{2}$ are real and of different signs),
the condition \eqref{53} is equivalent to
$\la_{1}(\zeta):\la_{2}(\zeta) \equiv \const$ for all $\zeta \in W^c$.
}
\label{RemB}
\end{remark}

The conditions in Theorem \ref{PT5} become weaker if we consider $C^k$-smooth equivalence with $k<\infty$.
Set
\begin{equation*}
N(k) = 2 \biggl[ (2k+1) \frac{\max |\Real \lambda_{1,2}|}{\min \, |\Real \lambda_{1,2}|} \, \biggr] + 2,
\quad k \in \mathbb N,
\end{equation*}
where the square brackets is the integer part of a number.

\begin{theorem}[\cite{GR, Sam82}]
For any $k\in \bN$, the statements in Theorem \ref{PT5} still hold true
if $\Ci$ is replaced with $C^k$ and the inequalities
$1 \le |s|$, $1 \le m$ are replaced with $1 \le |s| \le N(k)$, $1 \le m \le N(k)$ respectively.
\label{PT55}
\end{theorem}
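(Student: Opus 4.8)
The final statement to prove is Theorem~\ref{PT55}, which asserts that Theorem~\ref{PT5} holds with $C^\infty$ replaced by $C^k$ and the resonance bounds $1\le|s|$, $1\le m$ replaced by $1\le|s|\le N(k)$, $1\le m\le N(k)$.

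The plan is to follow the proof of Theorem~\ref{PT5} (from \cite{GR, Rouss}) but track the finite-smoothness bookkeeping carefully, using the finite-smooth Sternberg-type estimates from \cite{Sam82}. First I would recall that a $C^k$ change of coordinates can kill all non-resonant monomials of degree up to some explicit order $N$ depending on $k$ and on the ratio of the real parts of the nonzero eigenvalues; the quantity $N(k)=2\bigl[(2k+1)\max|\Real\lambda_{1,2}|/\min|\Real\lambda_{1,2}|\bigr]+2$ is exactly the threshold that appears in Samovol's finite-smooth linearization theorem for hyperbolic systems. The point is that monomials of high enough degree are automatically absorbed into the $C^k$-small remainder and need not be normalized, so only resonances of order $\le N(k)$ obstruct the reduction. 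I would then argue, as in \cite{Rouss}, that since all components of the germ \eqref{48} lie in the ideal $I=\langle v,w\rangle$ and $W^c=\{v=w=0\}$ consists of singular points, the normalization proceeds fibrewise over $W^c$: for each $\zeta\in W^c$ the restriction to the hyperbolic variables $(\xi,\eta)$ is a hyperbolic germ with eigenvalues $\lambda_1(\zeta),\lambda_2(\zeta)$, and one applies the finite-smooth normal-form theorem with parameters, obtaining a reduction that is $C^k$ in $(\xi,\eta)$ and depends continuously (indeed $C^k$) on $\zeta$. The absence of resonances \eqref{49} of order $\le N(k)$ for all $\zeta$ near $0$ guarantees that the only surviving terms are the linear ones $\lambda_j(\zeta)\xi,\lambda_j(\zeta)\eta$ together with, in the real case, a possible resonant monomial $\phi(\zeta)\eta^m$ with $\lambda_1=m\lambda_2$; but such $m$ can occur only for $m\le N(k)$ because a resonance $\lambda_1=m\lambda_2$ of order $m+1>N(k)$ is excluded by hypothesis.

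The key steps, in order, are: (i) reduce to hyperbolic variables and set up the parametrized family over $W^c$, recording that the center directions $\zeta_j$ are already in normal form ($\dot\zeta_j=0$ after the preliminary reduction from \eqref{48} to the form where components are in $I$); (ii) invoke the finite-smooth Sternberg/Samovol linearization (and, for the resonant real case, the finite-smooth Poincar\'e--Dulac normal form) with the explicit degree threshold $N(k)$, applied uniformly in the parameter $\zeta$; (iii) check that the resulting normalizing transformation glues into a single $C^k$-diffeomorphism of a neighbourhood of $0$ in $\bR^n$ preserving the ideal structure $\langle X,Y\rangle=\langle\xi,\eta\rangle$; (iv) identify the surviving monomial in the real case and verify the constraint $1\le m\le N(k)$.

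The main obstacle I expect is step (ii)–(iii): making the finite-smooth normalization \emph{uniform in the center parameter} $\zeta$ and verifying that the fibrewise normalizing maps assemble into a genuinely $C^k$-smooth map on the total space, not merely continuous in $\zeta$. This is where one must be careful that the homological-equation estimates, and the ``small-denominator''-type lower bounds on $|s_1\lambda_1(\zeta)+s_2\lambda_2(\zeta)-\lambda_j(\zeta)|$ for $|s|\le N(k)$, hold with constants uniform over a neighbourhood of $0$ in $W^c$ — which follows from continuity of the $\lambda_j(\zeta)$ and the openness of the nonresonance conditions, but needs to be stated. The rest — the reduction to hyperbolic variables, the identification of the resonant monomial, and the bound on $m$ — is routine once the uniform finite-smooth normal form with parameters is in hand, and can be cited directly from \cite{Sam82} together with the argument already used to prove Theorem~\ref{PT5}.
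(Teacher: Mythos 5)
The paper offers no proof of Theorem~\ref{PT55}: it is stated in the survey appendix as a result imported from \cite{GR} and \cite{Sam82}, so there is no in-paper argument to compare yours against. Your outline does follow the route those references take --- Samovol's finite-smooth normalization with the explicit degree threshold $N(k)$, carried out fibrewise over the center manifold $W^c$ in the style of \cite{Rouss}, with the genuine technical work concentrated exactly where you locate it, namely in making the normalizing transformation $C^k$ jointly in the hyperbolic variables and the parameter $\zeta$. Two corrections to the bookkeeping, though. First, your justification for the bound $m\le N(k)$ is wrong as stated: the hypothesis of the theorem only excludes resonances of type \eqref{49} (i.e.\ $s_1\la_1+s_2\la_2=0$) up to order $N(k)$; it says nothing about resonances of type \eqref{50}, and in particular the relation $\la_1=m\la_2$ is \emph{not} ``excluded by hypothesis'' for large $m$ --- it is precisely the resonance that the normal form \eqref{52} is allowed to retain. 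The correct reason the monomial $\phi(\zeta)\eta^m$ disappears for $m>N(k)$ is that a resonant monomial of degree exceeding Samovol's threshold is itself removable by a $C^k$ change of coordinates (it is absorbed into the finitely-flat remainder), not that the resonance fails to hold. Second, the order of the resonance $\la_1=m\la_2$ in the convention of \eqref{50} is $|s|=m$, not $m+1$; this matters because it is what makes the stated bound $1\le m\le N(k)$ line up with the bound $1\le|s|\le N(k)$ on the excluded resonances \eqref{49}. With those repairs, your plan is the standard one, and the uniformity issue you flag in step (ii)--(iii) is indeed the substantive content supplied by \cite{Sam82} and \cite{GR}.
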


The normal form \eqref{51}, \eqref{52} can be further simplified.
For our purposes, we are interested in orbital normal form in the case when
the resonance $\la_1(\zeta) = m\la_2(\zeta)$ holds at all $\zeta \in W^c$.
Then, dividing by $\la_2(\zeta)$, from \eqref{51}, \eqref{52} we get the orbital normal form
\begin{equation}
\dot \xi = (m\xi + \psi(\zeta)\eta^m), \ \ \dot \eta = \eta, \ \ \,
\dot \zeta_j = 0, \ \  j =1, \ldots, n-2.
\label{54}
\end{equation}
where the smooth functions $\phi(\zeta)$ and $\psi(\zeta)$ vanish simultaneously.

The following lemma gives a simple geometric criterion for $\psi(\zeta) \equiv 0$,
which is important for applications.

\begin{lemma}
Let $V$ be the germ of a field from Theorem~\ref{PT5} with the normal form \eqref{51}, \eqref{52}
and the resonance $\la_1(\zeta) = m\la_2(\zeta)$, $m>1$, holds at all points $\zeta \in W^c$.
Then in the orbital normal form \eqref{54}, $\psi(\zeta) = 0$ if and only if
$V$ has a $C^{m}$-smooth integral curve that passes through
the corresponding point $\zeta \in W^c$ with the tangential direction
parallel to the eigenvector with $\la_2(\zeta)$.
\label{PL1}
\end{lemma}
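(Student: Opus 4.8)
The plan is to work directly with the orbital normal form \eqref{54} and reduce the statement to a computation on a single invariant leaf $\zeta = \const$, where the vector field is a two-dimensional resonant node $\dot\xi = m\xi + \psi(\zeta)\eta^m$, $\dot\eta = \eta$ with eigenvalues $m:1$. First I would recall the classical fact about the planar resonant node: for fixed $\zeta$, if $\psi(\zeta)=0$ the system is linear and every integral curve other than the $\xi$-axis has the form $\xi = c\eta^m$ for some constant $c \in \bR$, hence is $C^m$-smooth (indeed real-analytic) at the origin, and it is tangent to the $\eta$-axis, i.e.\ to the eigenvector associated with $\la_2(\zeta)$. Conversely, if $\psi(\zeta)\neq 0$, integrating the linear equation $\dot\eta=\eta$ gives $\eta = \eta_0 e^t$ and then solving the inhomogeneous linear ODE for $\xi$ along the flow produces $\xi = c\eta^m + \psi(\zeta)\,\eta^m \ln|\eta|$ for integral curves through the origin other than the $\xi$-axis. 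The logarithmic term shows such a curve, written as a graph $\xi = \xi(\eta)$, is $C^{m-1}$ but not $C^m$ at $\eta = 0$: its $m$-th derivative blows up. This is the heart of the argument — the appearance of the resonant logarithmic monomial is exactly the obstruction to $C^m$-smoothness of the non-trivial separatrix-like orbits.

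Next I would transfer this dichotomy back through the orbital equivalences. By Theorem~\ref{PT5} (and the construction of \eqref{54}), the germ $V$ is orbitally $\Ci$-equivalent — via a diffeomorphism $H$ fixing $W^c$ pointwise and carrying the eigendirection with $\la_2(\zeta)$ to the $\eta$-axis — to the normal form \eqref{54}. Orbital equivalence sends integral curves of $V$ to integral curves of \eqref{54}; since $H$ is a $\Ci$-diffeomorphism, an integral curve of $V$ through $\zeta \in W^c$ is $C^m$-smooth if and only if its image under $H$ is $C^m$-smooth, and the tangent-direction condition is preserved because $dH$ at points of $W^c$ maps the $\la_2$-eigenvector to $\pa_\eta$. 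Therefore: $V$ possesses a $C^m$-smooth integral curve through $\zeta$ tangent to the $\la_2(\zeta)$-eigendirection if and only if the normal form \eqref{54} does, and by the planar analysis this happens precisely when $\psi(\zeta)=0$.

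It remains to argue that the existence of one such $C^m$ curve forces $\psi(\zeta)=0$ identically on a whole neighbourhood in $W^c$, rather than just at the single point $\zeta$; but this is immediate, since $\psi$ is smooth and the property "$\psi(\zeta_0)=0$" is exactly what the criterion asserts pointwise — the lemma is stated leaf-by-leaf, so no propagation is needed and the equivalence "$\psi(\zeta)=0 \iff$ existence of a $C^m$ integral curve at $\zeta$" is the full content. One point deserving care: in \eqref{54} the $\xi$-axis $\{\eta=0\}$ is always an integral curve and it is smooth, but it is tangent to the $\la_1$-eigendirection, not the $\la_2$ one (since $m>1$ forces $\la_1 \ne \la_2$), so it does not satisfy the hypothesis and does not interfere; I would state this explicitly to rule out the degenerate reading of the criterion. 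The main obstacle is simply the clean derivation of the logarithmic term and the verification that it genuinely destroys $C^m$ (but not $C^{m-1}$) regularity — a short but essential ODE computation that I would carry out in local coordinates on a fixed leaf.
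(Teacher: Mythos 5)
Your proposal is correct and follows essentially the same route as the paper: integrate the normal form \eqref{54} leaf by leaf to obtain the family $\xi = \eta^m(c+\psi(\zeta)\ln|\eta|)$ and observe that the logarithmic term is precisely the obstruction to $C^m$-smoothness at the origin of the leaf. Your explicit remarks on transferring the criterion back through the $\Ci$ conjugating diffeomorphism and on excluding the $\xi$-axis (which is tangent to the $\la_1$-eigendirection) are points the paper leaves implicit, but they do not change the argument.
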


\begin{proof}
The field  \eqref{54} can be integrated explicitly.
It has the invariant foliation $\zeta = \const$ and each leaf
contains a single integral curve $\eta=0$ with tangential direction $\pa_{\xi}$
and one-parameter family of integral curves
\begin{equation}
\xi = \eta^m (c+\psi(\zeta)\ln |\eta|), \ \ c=\const,
\label{55}
\end{equation}
with the common tangential direction $\pa_{\eta}$ at the point $\xi=\eta=0$.
All the curves \eqref{55} are $C^{m-1}$-smooth at $\xi=\eta=0$
if $\phi(\zeta) \neq 0$ and $\Ci$-smooth at zero if $\phi(\zeta)\equiv 0$.

Given $\zeta \in W^c$, the existence of at least one $C^{m}$-smooth integral curve passing through the point $\xi=\eta=0$ (the intersection of $W^c$ with the corresponding invariant leaf $\zeta = \const$)
with the tangential direction parallel to the eigenvector with $\la_2(\zeta)$
is equivalent to the condition $\psi(0)=0$.
\end{proof}

\begin{theorem}[\cite{GR, Rouss}]
Suppose that the resonance $\la_1(\zeta) + \la_2(\zeta)=0$ holds at all singular points
$\zeta \in W^c$ and $\Real \la_{1} \neq 0$, $\Real \la_{2} \neq 0$.
Then, for any natural $k$, the germ $V$ is $C^k$-smoothly equivalent to
\begin{equation}
\begin{aligned}
\dot \xi = \xi(\lambda_1(&\zeta) + \rho \Phi_1(\rho,\zeta)),  \quad
\dot \eta = \eta(\lambda_2(\zeta) + \rho \Phi_2(\rho,\zeta))_, \\
&\dot \zeta_j = \rho \Psi_j(\rho,\zeta), \, \quad j=1,\ldots, n-2,
\end{aligned}
\label{4.15}
\end{equation}
where $\Phi_i(\rho,\zeta)$ and $\Psi_j(\rho,\zeta)$ are polynomials in $\rho = \xi \eta$ of degrees $N(k)-1$.

If $\Psi_j(0,0) \neq 0$ for at least one $j=1,\ldots, n-2$, then
the germ $V$ is $\Ci$-smoothly orbitally equivalent to
\begin{equation}
\dot \xi = \xi, \ \ \dot \eta = -\eta, \ \ \dot \zeta = \xi\eta, \, \quad j=1,\ldots, n-2.
\label{4.16}
\end{equation}
\label{PT6}
\end{theorem}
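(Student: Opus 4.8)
The plan is to pass through a formal resonant normal form dictated by the single resonance $\la_1(\zeta)+\la_2(\zeta)=0$, to realize it $C^k$-smoothly by the finitely-smooth normalization technique already underlying Theorems~\ref{PT4} and \ref{PT5}, and finally to carry out an explicit orbital reduction when $\Psi_j(0,0)\neq 0$. First I would fix coordinates adapted to the center manifold so that $W^c=\{\xi=\eta=0\}$ and the hyperbolic part at a point $\zeta\in W^c$ has eigenvalues $\la_1(\zeta)=-\la_2(\zeta)$, which are real and of opposite sign near the origin (the complex case is excluded since $\la_1+\la_2=0$ would force $\Real\la_i=0$, contradicting partial hyperbolicity). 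A monomial $\xi^a\eta^b\zeta^c\pa_\xi$ is resonant iff $a\la_1+b\la_2=\la_1$, i.e. $(a-1-b)\la_1=0$, hence $a=b+1$; so the only resonant terms in the $\xi$-equation are $\xi\rho^i\zeta^c$ with $\rho=\xi\eta$, in the $\eta$-equation only $\eta\rho^i\zeta^c$, and in the $\zeta_j$-equation only $\rho^i\zeta^c$ with $i\ge 1$ there, because $V$ vanishes on $W^c$ so $\rho$ divides the $\zeta_j$-component. Since $\la_1(\zeta)\ne 0$, every non-resonant monomial carries a small divisor bounded away from $0$ for $\zeta$ near $0$, the homological equations are solvable order by order, and the formal normal form is precisely \eqref{4.15} with $\Phi_i,\Psi_j$ formal series in $(\rho,\zeta)$.

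For a prescribed $k$ I would truncate the normalizing transformation at order $N(k)$. The arithmetic gap built into $N(k)$ is exactly what the Samovol/Sell-type estimates (as in \cite{AI, Sam82, Sam96, Sell, Rouss}) require in order to make this truncated transformation $C^k$-smooth and to annihilate all non-resonant terms of order $\le N(k)$; this brings $V$ to \eqref{4.15} plus a remainder vanishing to order $>N(k)$ on $W^c$. Removing this remainder is the technical heart of the proof: I would use the homotopy (path) method, joining the truncated normal form to the full field by a segment $V_t$ and solving the associated cohomological equation $\dot h=-X_t$ along the segment. Here the hyperbolic directions dominate the (sufficiently flat) remainder, so the homological operator is invertible with uniform $C^k$ bounds, and integrating the resulting non-autonomous field over the parameter interval produces the conjugacy to exactly \eqref{4.15}. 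I expect this to be the main obstacle — controlling the loss of derivatives and showing the flat remainder can be killed by a genuine $C^k$-diffeomorphism rather than merely a formal transformation; the resonance bookkeeping above and the algebra below are routine by comparison.

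Under the hypothesis $\Psi_j(0,0)\ne 0$ I would argue orbitally and entirely within the $\Ci$ category. Dividing \eqref{4.15} by the non-vanishing factor $\la_1(\zeta)+\rho\Phi_1(\rho,\zeta)$ and using $\la_2=-\la_1$ turns the first two equations into $\dot\xi=\xi$ and $\dot\eta=-\eta\bigl(1-\rho g(\rho,\zeta)\bigr)$, and the last into $\dot\zeta_i=\rho\widetilde\Psi_i(\rho,\zeta)$ with $\widetilde\Psi_j(0,0)\ne 0$. After a linear change among the $\zeta$'s one may take $j=1$; a further $\Ci$ time-rescaling by $\widetilde\Psi_1^{-1}$ normalizes $\dot\zeta_1=\rho$, and explicit smooth substitutions $\xi\mapsto\xi\,e^{A(\rho,\zeta)}$, $\eta\mapsto\eta\,e^{B(\rho,\zeta)}$, $\zeta_i\mapsto\zeta_i-C_i(\rho,\zeta)$ for $i\ge 2$ absorb the remaining $O(\rho)$ corrections, yielding $\dot\xi=\xi$, $\dot\eta=-\eta$, $\dot\zeta_1=\xi\eta$, $\dot\zeta_i=0$, i.e. the normal form \eqref{4.16}. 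The decisive point is that none of these reductions linearizes the resonant hyperbolic node on the invariant leaves $\rho=\const$ — the non-degeneracy $\Psi_j(0,0)\ne 0$ does the work instead — which is precisely why the orbital statement can be realized $\Ci$-smoothly although \eqref{4.15} was only $C^k$-smooth.
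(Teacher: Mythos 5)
The paper does not prove this statement; it is quoted as a known result from \cite{GR, Rouss}, so your proposal can only be measured against the proofs in those references. Your first half (the $C^k$ normal form \eqref{4.15}) follows the standard route: the resonance bookkeeping is correct (with $\la_2=-\la_1$ real and nonzero, the resonant monomials are exactly $\xi\rho^i\zeta^c\pa_\xi$, $\eta\rho^i\zeta^c\pa_\eta$, $\rho^i\zeta^c\pa_{\zeta_j}$ with $i\ge1$ in the last case), and the Samovol-type truncation at the order governed by $N(k)$ plus a path-method removal of the flat remainder is indeed how \cite{Sam82, Rouss} proceed. You rightly identify the $C^k$ control of the cohomological equation as the technical heart and defer it to the references; as an outline that part is acceptable.

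The genuine gap is in the second half. You derive \eqref{4.16} by applying $\Ci$ operations (division by $\la_1(\zeta)+\rho\Phi_1$, exponential substitutions, time rescaling) \emph{to the normal form \eqref{4.15}}. But \eqref{4.15} is only $C^k$-conjugate to $V$, so the composite equivalence $V\to\eqref{4.15}\to\eqref{4.16}$ is only $C^k$ for each fixed $k$, and $C^k$-equivalence for every $k$ does not yield a single $\Ci$ diffeomorphism (the conjugacies may degenerate as $k\to\infty$). Your closing remark that the non-degeneracy $\Psi_j(0,0)\neq0$ ``does the work instead'' gestures at the right mechanism but does not supply it. In Roussarie's argument (and in \cite{GR}), the $\Ci$ orbital equivalence to \eqref{4.16} is \emph{not} obtained by post-processing \eqref{4.15}: the non-degenerate quadratic term $\xi\eta\pa_{\zeta_1}$ is what makes the homological operator of the path method surjective onto flat functions in the $\Ci$ category, so the infinitely flat remainder can be killed directly by a $\Ci$ conjugacy. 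That is precisely the step your proof is missing, and it is not routine --- note the paper's own caveat that Theorem~\ref{PT6} fails in the $\Com$ category, which signals that the $\Ci$ claim rests on a delicate flat-function argument rather than on the explicit algebra you perform. A secondary, minor point: your truncation ``at order $N(k)$'' does not match the stated degree $N(k)-1$ in $\rho$ (total degree roughly $2N(k)-1$), but that is a bookkeeping convention, not an error of substance.
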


Theorem \ref{PT6} is not valid in $\Com$ category.


\section{Appendix B. Naturally parametrized geodesics}

Naturally parametrized geodesics can be defined as extremals of the action functional
\begin{equation*}
J(\gamma) = \int\limits_{\gamma} \bigl(a{\dot x}^2 + 2b{\dot x}\dot y + c{\dot y}^2\bigr)\,dt,
\quad
\dot x = \frac{dx}{dt}, \ \ \dot y = \frac{dy}{dt},
\end{equation*}
where $\gamma \subset S$ is a differentiable curve.
The corresponding Euler-Lagrange equation reads
\begin{equation}
\left \{ \
\begin{aligned}
& 2(a \ddot x + b \ddot y) = (c_x-2b_y) {\dot y}^2 - 2a_y {\dot x} {\dot y} - a_x {\dot x}^2, \\
& 2(b \ddot x + c \ddot y) = (a_y-2b_x) {\dot x}^2 - 2c_x {\dot x} {\dot y} - c_y {\dot y}^2. \\
\end{aligned}
\right.
\label{ELE}
\end{equation}
The definition of geodesics as auto-parallel curves in the Levi-Civita connection
generated by the metric \eqref{1} leads to the same equation~\eqref{ELE}.
Equation~\eqref{ELE} defines a direction field on the tangent bundle $TS$.
The standard projectivization $TS \to PTS$ sends this direction field to the field parallel to \eqref{5},
see \cite{Rem15}.

\medskip

Firstly, using equation~\eqref{ELE} of parametrized geodesics, we prove the omitted statement
in the case $Z$ that the line $\Psi (\Pi) = \{y=p=0\}$
does not correspond to a geodesic.
Recall that in the case $Z$ there exist local coordinates such that
$$
ds^2 = (y\om + \ldots) dx^2 + (0 + \ldots) dxdy - (\om + \ldots)dy^2,
\quad \om(0,0)=-1.
$$
where the dots mean terms that belong to the ideal $\MM^{\infty}(y)$.

Using appropriate change of variables $y \mapsto yu(x,y)$, where $u$ is a solution of equation
$cu_x + b/y = 0$ with the condition $u(0,0) \neq 0$,
one can bring locally the metric to the diagonal form $ds^2 = a dx^2 + c dy^2$ with
the coefficients $a = yu\om + \ldots$ and $c = -\om (u+yu_y)$.
Substituting $y \equiv b \equiv 0$ into the second equation of \eqref{ELE}, we get
$a_y(x,0) {\dot x}^2 = 0$. Since $a_y(0,0) \neq 0$, this yields $\dot x \equiv 0$,
and the restriction of the system \eqref{ELE} to $y=0$ has only constant solutions,
which are not geodesics.

At first sight it contradicts the fact established in \cite{GR}:
$\FF$ is an invariant surface of the field \eqref{5}, and consequently,
any trajectory of \eqref{5} that lies entirely in $\FF$
after the projection on the $(x,y)$-plane gives a geodesic or a point.
(Example: for the metric $ydx^2 - dy^2$ the isotropic surface $p^2=y$ is filled with
one-parameter family of integral curves intersecting $\Psi (\Pi)$ transversally.
Projecting this family down, we get the isotropic geodesics $y = \frac{1}{4}(x-c)^2$.)
In fact, there is no contradiction: the curve $\Psi (\Pi) \subset \FF$ consists of
singular points of the field \eqref{5}, and every such a point is a trajectory of \eqref{5}.

\medskip

Consider the family $\Gamo$ of geodesics outgoing from a point $q \in \D$
with the isotropic direction $p_0$.
Choose the natural parametrization so that the motion along
geodesics proceeds toward $q$.
In the paper \cite{Rem15}, it was proved that in the cases $C_1, C_3$
any geodesic $\gam \in \Gamo$ incomes into the point $q$ in finite time with infinite velocity.
The same statement is valid in the case $C_2$, since it deals with the root $p_0$ only.

In the case $Z$, the same result follows from the asymptotic formula established in Theorem~\ref{TAPP} below.
The cases $D_s, D_n$ can be considered similarly, the case $D_f$ is excluded from consideration,
since every geodesic $\gam \in \Gamo$ intersects the discriminant curve $\D$ infinite number of times
in any neighborhood of $q$.

\begin{theorem}
\label{TAPP}
The natural parametrization of geodesics \eqref{17} is given by the formula
$x = t^{\frac{1}{3}} \bigl(1+X_{\alf}(t^{\frac{1}{3}})\bigr)$,
where $X_{\alf}(\cdot)$ are smooth functions vanishing at zero.
\end{theorem}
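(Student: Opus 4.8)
The plan is to derive the natural parametrization from the Euler--Lagrange equation \eqref{ELE} restricted to a geodesic $\gamma = \gamap \in \Gamo$ of the form \eqref{17}, using the normal-form metric \eqref{11} with $\eps = 0$, i.e. $ds^2 = \om(x,y)(y\,dx^2 - dy^2) + \Theta$ with $\om(0,0) = -1$. Along such a geodesic we know from Theorem~\ref{T3} that $y = \tfrac14 x^2 + O(x^4)$ and $p = dy/dx = \tfrac12 x + O(x^3)$, so $y - p^2 = O(x^4)$; hence $\om(x,y)(y\,\dot x^2 - \dot y^2) = \om(x,y)\dot x^2(y - p^2) = O(x^4)\dot x^2$, and the $\Theta$-terms are infinitely flat on $\D$ and contribute nothing to leading order. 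This is the key mechanism: the Lagrangian is \emph{not} zero along the geodesic (it is not isotropic unless $\alf = 0$), but it vanishes to high order as $x \to 0$.

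First I would write down the first integral coming from the $x$-independence of the diagonalized metric (after the change $y \mapsto yu(x,y)$ used in Appendix~B to kill the cross term): the conserved quantity $H = L - pL_p$, or equivalently the ``energy'' $a\dot x^2 + c\dot y^2 = \const$ along the parametrized geodesic, together with the second conserved quantity $a\dot x = \const$ (momentum conjugate to $x$). Dividing these gives $\dot y/\dot x = p$, consistent with \eqref{17}, and the energy relation becomes $a(x,y)\dot x^2 (1 + p^2 \cdot (\text{stuff})) = E$, where along the geodesic $a(x,y) = y\,u\om + \ldots$ evaluated on \eqref{17} behaves like $c_0 x^2(1 + O(x^2))$ for a nonzero constant $c_0$ (since $a_y(0,0) \ne 0$ and $y \sim \tfrac14 x^2$). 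Solving for $\dot x$ gives $\dot x = C x^{-1}(1 + O(x^2))^{1/2}$ for a nonzero constant $C$, an ODE of separable type: $x\,dx = C\,dt\,(1 + \ldots)$, whence $\tfrac12 x^2 = Ct(1 + \ldots)$ after choosing $t$ so that $x = 0$ at $t = 0$.

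From $x^2 = 2Ct(1 + \ldots)$ one wants to recover $x = t^{1/3}(1 + X_\alf(t^{1/3}))$; here the exponent $\tfrac13$ rather than $\tfrac12$ must come from a more careful accounting of the ``stuff'' multiplying $\dot x^2$. The point is that the coefficient of $\dot x^2$ in the energy is not just $a$ but $a + c p^2 \cdot(\ldots)$ — no: rather, since $\dot y = p\dot x$, the energy is $(a + cp^2)\dot x^2$, and with the diagonal normal form $c = -\om(u + yu_y)$, $a = yu\om + \ldots$, one gets $a + cp^2 = \om(yu - (u+yu_y)p^2) + \ldots = \om u(y - p^2) + O(x^4) = O(x^4)$ because $y - p^2 = O(x^4)$ along \eqref{17}. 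So in fact the energy relation reads $(\text{const}\cdot x^4 + \ldots)\dot x^2 = E$, giving $\dot x = \tilde C x^{-2}(1 + O(x^2))$, hence $x^2\,dx = \tilde C\,dt(1+\ldots)$, hence $\tfrac13 x^3 = \tilde C t(1 + \ldots)$, which inverts to $x = (3\tilde C t)^{1/3}(1 + \tilde X(t))$ with $\tilde X$ a function of $x$, hence of $t^{1/3}$, vanishing at $0$. Rescaling $t$ absorbs the constant, yielding the claimed formula.

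\textbf{Main obstacle.} The delicate step is justifying that $y - p^2 = O(x^4)$ \emph{precisely} (not merely $O(x^2)$) along every geodesic of the family \eqref{17} and that the resulting right-hand side of the $\dot x$-ODE is smooth in $x$ after extracting $x^{-2}$ — i.e. controlling the infinitely-flat $\Theta$-contributions and the error terms in \eqref{17} well enough to conclude that the inversion of $\tfrac13 x^3 = \tilde C t(1 + O(x^2))$ produces a function smooth in $t^{1/3}$. I expect this to follow from \eqref{17} (which gives $y - \tfrac14 x^2 = \alf x^4(1 + Y_\alf(x))$ and hence, differentiating, $p - \tfrac12 x = 2\alf x^3(1 + \ldots)$, so $y - p^2 = \tfrac14 x^2 - \tfrac14 x^2 + O(x^4) = O(x^4)$ indeed) combined with the flatness of $\Theta$ on $\D$, together with a routine application of the implicit function theorem in the variable $x$ (or $t^{1/3}$) to invert the relation between $x$ and $t$. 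The substantive content is the exponent bookkeeping; once $\dot x \sim \tilde C x^{-2}$ is established, the rest is the elementary integration and inversion sketched above.
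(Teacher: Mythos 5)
Your route is genuinely different from the paper's: you extract the parametrization from the conserved ``energy'' $L=a\dot x^2+2b\dot x\dot y+c\dot y^2=E$ (conserved because $L$ is $t$-independent and quadratic in velocities), whereas the paper substitutes \eqref{17} into the \emph{first} equation of \eqref{ELE} and obtains the single ODE $\ddot x/\dot x=(-2/x+f_{\alf}(x))\dot x$, which integrates to $\dot x=Kx^{-2}e^{F_{\alf}(x)}$ and hence $t=x^3(1+T_{\alf}(x))$. Your exponent bookkeeping for the non-isotropic members is correct: along \eqref{17} one has $y-p^2=-3\alf x^4+O(x^5)$, so $(a+2bp+cp^2)\dot x^2=E$ gives $\dot x\sim Cx^{-2}$ and the same cube-root inversion. (A side remark: the claimed second first integral $a\dot x=\const$ is false in general, since after diagonalization the coefficients still depend on $x$ --- only the model example $dy^2-y\,dx^2$ is $x$-independent; but you do not really need it, as $\dot y=p\dot x$ is just the chain rule.)

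The genuine gap is the isotropic geodesic $\alf=0$, which is included in the family \eqref{17} and hence in the statement. Along an isotropic curve $a+2bp+cp^2\equiv 0$, so the conserved energy is $E=0$ and your relation $(a+2bp+cp^2)\dot x^2=E$ reads $0=0$: it is satisfied by \emph{every} parametrization and determines nothing. You flag parenthetically that the Lagrangian vanishes when $\alf=0$ but then proceed as if $E\neq 0$. For $\alf=0$ the affine (natural) parametrization must be recovered from the Euler--Lagrange system itself, not from its first integral; this is exactly what the paper's argument does, and it works uniformly in $\alf$. To repair your proof you would need either to restrict the energy argument to $\alf\neq 0$ and treat $\alf=0$ separately via \eqref{ELE} (or via a limit/continuity argument in $\alf$, which would itself need justification), or simply to adopt the direct integration of \eqref{ELE} from the start.
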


\begin{proof}
Choosing the local coordinates in Theorem~\ref{T3},
from the formula \eqref{17} we have $y=\frac{1}{4}x^2+O(x^4)$, $\dot y=(\frac{1}{2}x+O(x^3))\dot x$,
and $\ddot y=(\frac{1}{2}x+O(x^3))\ddot x + (\frac{1}{2}+O(x^2)){\dot x}^2$.
Substituting these expressions together with the coefficients $a,b,c$ from \eqref{11}
into the first equation in \eqref{ELE},
after a straightforward transformation we obtain
\begin{equation}
\frac{\ddot x}{\dot x} = \Bigl( -\frac{2}{x} + f_{\alf}(x) \Bigr) \dot x,
\label{5.2}
\end{equation}
where $f_{\alf}(x)$ are smooth functions.

Equation \eqref{5.2} defines the natural parametrization uniquely up to
non-degenerate affine transformations of the $t$-axis.
Integrating it, we get $\ln |\dot x| = -2\ln |x| + F_{\alf}(x) + C$,
and $\dot x = K x^{-2} e^{F_{\alf}(x)}$, where $F_{\alf}$ is the primitive of $f_{\alf}$.
Without loss of generality put $F_{\alf}(0)=0$ and $K = \frac{1}{3}$
(this corresponds to the choice of the initial velocity of motion along the geodesic).
Then we arrive at the differential equation $\frac{dt}{dx} = 3x^{2} e^{- F_{\alf}(x)}$,
whose general solution is $t = x^3 (1+T_{\alf}(x)) + t_0$,
where $T_{\alf}(x)$ is a smooth function vanishing at zero.
Setting $t_0=0$ and inverting, we get
$x = t^{\frac{1}{3}} \bigl(1+X_{\alf}(t^{\frac{1}{3}})\bigr)$.
\end{proof}

As an example, for the metric $ds^2 =  dy^2 - y dx^2$ the system~\eqref{ELE} reads
$y \ddot x = - \dot x \dot y$, $2 \ddot y = - {\dot x}^2$.
Substituting here the isotropic geodesic $y = \frac{1}{4} x^2$, we get
$x = k(t-t_0)^{\frac{1}{3}}$.
Substituting  $y = 0$ (the line $\Psi (\Pi)$), we get $\dot x = 0$, that is,
$y=0$ is not a geodesic as was stated above.


\small

\noindent
Instituto de Ci\^encias Matem\'aticas e de Computa\c{c}\~ao - USP, Avenida Trabalhador S\~ao-Carlense, 400 - Centro,
CEP: 13566-590 - S\~ao Carlos - SP, Brazil.\\
Emails: alexey-remizov@yandex.ru, faridtari@icmc.usp.br

\end{document}